\newtheorem{thm}{Theorem}[section]
\newtheorem{prop}[thm]{Proposition}
\newtheorem{lem}[thm]{Lemma}
\newtheorem{false statement}{False statement}
\newtheorem{cor}[thm]{Corollary}
\newtheorem{fact}[thm]{Fact}
\theoremstyle{definition}
\newtheorem{defn}[thm]{Definition}
\newtheorem{claim}[thm]{Claim}
\newtheorem{conj}[thm]{Conjecture}
\makeatletter \@addtoreset{equation}{section}
\def\hh{\mathcal{H}}
\def\hht{\mathcal{T}}
\def\hf{\mathcal{F}}
\def\hg{\mathcal{G}}
\def\ha{\mathcal{A}}
\def\hb{\mathcal{B}}
\def\hj{\mathcal{J}}
\def\hl{\mathcal{L}}
\def\hp{\mathcal{P}}
\def\hq{\mathcal{Q}}
\begin{document}

\title{\bf\Large  Improved bounds on the maximum diversity of intersecting families}
\date{}
\author{Peter Frankl$^1$, Jian Wang$^2$\\[10pt]
$^{1}$R\'{e}nyi Institute, Budapest, Hungary\\[6pt]
$^{2}$Department of Mathematics\\
Taiyuan University of Technology\\
Taiyuan 030024, P. R. China\\[6pt]
E-mail:  $^1$frankl.peter@renyi.hu, $^2$wangjian01@tyut.edu.cn
}
\maketitle

\begin{abstract}
 A family $\hf\subset \binom{[n]}{k}$ is called an intersecting family if $F\cap F'\neq \emptyset$ for all $F,F'\in \hf$.  If  $\cap \hf\neq \emptyset$ then $\hf$ is called a star. The diversity of an intersecting family $\hf$ is defined as the minimum number of $k$-sets in $\hf$, whose deletion results in a star. In the present paper, we prove that for $n>36k$ any intersecting family $\hf\subset \binom{[n]}{k}$ has diversity at most $\binom{n-3}{k-2}$, which improves the previous best bound $n>72k$ due to the first author. This result is derived from some strong bounds concerning the maximum degree of large intersecting families. Some related results are established as well.
\end{abstract}

\section{Introduction}

Let $[n]=\{1,\ldots,n\}$ be the standard $n$-element set, $\binom{[n]}{k}$ the collection of its $k$-subsets, $0\leq k\leq n$.  A subset $\hf\subset \binom{[n]}{k}$ is called a {\it $k$-uniform family} or simply {\it $k$-graph}. The family $\hf$ is said to be {\it intersecting} if $F\cap F'\neq\emptyset$ for all $F,F'\in \hf$. Similarly two families $\hf,\hg$ are called {\it cross-intersecting} if $F\cap G\neq \emptyset$ for all $F\in \hf$, $G\in \hg$. Set $\cap \hf =\mathop{\cap}\limits_{F\in \hf}F$. If $\cap \hf \neq \emptyset$ then $\hf$ is called a {\it star}. Stars are the simplest examples of intersecting families. The quintessential Erd\H{o}s-Ko-Rado Theorem shows that they are the largest as well.

\begin{thm}[\cite{ekr}]
Suppose that $n\geq 2k>0$, $\hf\subset\binom{[n]}{k}$ is intersecting, then
\begin{align}\label{ineq-ekr}
|\hf| \leq \binom{n-1}{k-1}.
\end{align}
\end{thm}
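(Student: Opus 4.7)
The plan is to follow Katona's cyclic permutation method, which yields the desired bound by a short double-counting argument.

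I would begin by considering arrangements of $[n]$ on an $n$-cycle. Call a $k$-set $F \subset [n]$ an \emph{arc} of a cyclic arrangement $\sigma$ if its elements occupy $k$ consecutive positions of $\sigma$. The combinatorial core of the proof is the following cyclic lemma: if $n \geq 2k$, then in any cyclic arrangement of $[n]$, every pairwise intersecting subcollection of the $n$ arcs of length $k$ has cardinality at most $k$.

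To prove the cyclic lemma, I would parametrize arcs by their starting position on the cycle and observe that, under $n \geq 2k$, two arcs $A_s$ and $A_t$ intersect if and only if the circular distance between $s$ and $t$ is strictly less than $k$. Consequently, the starting positions of any pairwise intersecting family of arcs all lie within a single cyclic interval of length $k$, which contains at most $k$ such positions.

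Next, I would double count the pairs $(\sigma, F)$ where $\sigma$ is a cyclic arrangement of $[n]$ and $F \in \hf$ is an arc of $\sigma$. Summed over $F$, each $k$-set appears as an arc of exactly $k!(n-k)!$ cyclic arrangements (collapse $F$ to a block, arrange $n-k+1$ units cyclically in $(n-k)!$ ways, and permute inside the block in $k!$ ways), yielding $|\hf| \cdot k!(n-k)!$ total pairs. Summed over $\sigma$, the cyclic lemma contributes at most $k$ per arrangement, hence at most $k \cdot (n-1)!$ in all. Equating the two expressions rearranges directly to $|\hf| \leq \binom{n-1}{k-1}$.

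The main obstacle I anticipate is isolating the cyclic lemma correctly, since that is where the intersecting hypothesis is actually used and where the condition $n \geq 2k$ enters essentially: at the boundary $n = 2k$, complementary arcs are disjoint, so one must argue about circular distance rather than making the naive claim that pairwise close starting positions form an interval. Once the lemma is established, the remainder is routine bookkeeping.
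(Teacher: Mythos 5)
The paper does not actually prove this statement: the Erd\H{o}s--Ko--Rado theorem appears there only as a cited classical result, so your proof fills in material the paper omits.

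Your overall strategy (Katona's cyclic permutation argument) is a valid and standard way to prove the theorem, and your double-counting step is correct: each $k$-set is an arc of exactly $k!(n-k)!$ of the $(n-1)!$ cyclic arrangements, so summing gives $|\hf|\,k!(n-k)! \le k\,(n-1)!$, which rearranges to $|\hf|\le\binom{n-1}{k-1}$ once the cyclic lemma is in hand. The gap is in your proof of the cyclic lemma itself. You deduce from ``pairwise circular distance $<k$'' that the starting positions lie in a single cyclic interval of length $k$. This implication is false in the range that matters. For example, with $n=2k$ and $k\ge 3$, the starting positions $0$, $k-1$, $2k-2$ have pairwise circular distances $k-1$, $k-1$, and $2$, all strictly less than $k$, yet no set of $k$ consecutive positions on the $2k$-cycle contains all three. (With $k=3$, $n=6$, the arcs starting at $0,2,4$ are pairwise intersecting but their starting points do not sit in any window of length $3$.) You even flag this yourself in the final paragraph as the ``naive claim,'' but you do not supply the repair, and ``argue about circular distance'' is not a fix since your argument already does exactly that.

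The standard way to close the gap is a pairing argument rather than an interval argument: fix one arc $A_1$ in the intersecting subfamily. Every other arc in the subfamily intersects $A_1$, and those candidates are $A_2,\ldots,A_k$ together with $A_{n-k+2},\ldots,A_n$. Because $n\ge 2k$, the arcs $A_j$ and $A_{j+k\bmod n}$ are disjoint, so these $2(k-1)$ candidates split into $k-1$ disjoint pairs, from each of which at most one can be chosen. Together with $A_1$ this gives at most $k$ arcs, which is the cyclic lemma. With this replacement your proof is complete; as written it has a genuine gap precisely at the boundary case $n=2k$ that the theorem must cover.
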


In the case $n=2k$, $\binom{n-1}{k-1}=\frac{1}{2}\binom{n}{k}$ and being intersecting is equivalent to $|\hf\cap\{F,[n]\setminus F\}|\leq 1$ for every $F\in \binom{[n]}{k}$. Consequently there are $2^{\binom{n-1}{k-1}}$ intersecting families $\hf\subset \binom{[2k]}{k}$ attaining equality in \eqref{ineq-ekr}. However for $n>2k\geq 4$ there is a strong stability.

\begin{thm}[Hilton-Milner Theorem \cite{HM67}]
Suppose that $n> 2k\geq 4$, $\hf\subset\binom{[n]}{k}$ is intersecting and $\hf$ is not a star, then
\begin{align}\label{ineq-hm}
|\hf| \leq \binom{n-1}{k-1}-\binom{n-k-1}{k-1}+1.
\end{align}
\end{thm}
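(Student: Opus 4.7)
The plan is a shifting argument combined with a cross-intersection accounting. First, apply the shifting operations $S_{ij}$ for $2\le i<j\le n$ to $\hf$. These preserve both $|\hf|$ and the intersecting property, and since they act only on elements of $[2,n]$, the partition $\hf=\hf_1\cup\hf_{\bar 1}$ (according to whether $1\in F$) is preserved as well. Choosing $1$ to be a vertex of maximum degree in $\hf$ guarantees $\hf_{\bar 1}\neq\emptyset$. After shifting, $\hf$ is shifted on $[2,n]$; a small supplementary argument handles the possibility that some element of $[2,n]$ becomes common to all sets of $\hf$ after shifting.

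Next, shiftedness on $[2,n]$ together with $\hf_{\bar 1}\neq\emptyset$ forces the canonical set $T:=\{2,3,\dots,k+1\}$ into $\hf$: the lex-smallest member of $\hf_{\bar 1}$ cannot contain any element exceeding $k+1$, for otherwise shifting would produce a lex-smaller set. Since $T\in\hf$ and $\hf$ is intersecting, every $F\in\hf_1$ meets $T$, giving
\[
|\hf_1|\;\le\;\binom{n-1}{k-1}-\binom{n-k-1}{k-1},
\]
the number of $(k-1)$-subsets of $[2,n]$ meeting $T$ (these parametrize $F\setminus\{1\}$ for $F\in\hf_1$). This contributes the main term of the Hilton--Milner bound.

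For the remainder, set $\hf_1':=\{F\setminus\{1\}:F\in\hf_1\}$; then $(\hf_1',\hf_{\bar 1})$ is a cross-intersecting pair of shifted $(k-1)$- and $k$-uniform families on $[2,n]$. For each $G\in\hf_{\bar 1}\setminus\{T\}$, some $(k-1)$-subset $A$ of $[2,n]$ meets $T$ but is disjoint from $G$, and hence is forbidden from $\hf_1'$. Exploiting the shifted structure of $\hf_{\bar 1}$, one can assign such forbidden sets injectively across different $G$'s, yielding
\[
|\hf_{\bar 1}|-1\;\le\;\binom{n-1}{k-1}-\binom{n-k-1}{k-1}-|\hf_1|.
\]
Adding this to the previous bound gives $|\hf|\le\binom{n-1}{k-1}-\binom{n-k-1}{k-1}+1$, as required.

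The main obstacle is the injective assignment in the third step: distinct $G$'s in $\hf_{\bar 1}$ can \emph{a priori} forbid overlapping $(k-1)$-sets, and controlling the overcounts requires a careful use of the shifted structure of $\hf_{\bar 1}$ together with the hypothesis $n>2k$, which guarantees enough room in $[2,n]$ to select distinct representatives. A secondary but delicate technicality is the interaction between shifting and the non-star property, which one handles either by restricting to shifts on $[2,n]$ (as above) or by carefully tracking the last shift that preserves non-star-ness.
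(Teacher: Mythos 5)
The paper only cites the Hilton--Milner theorem (reference \cite{HM67}) and does not supply a proof, so there is no ``paper proof'' to compare against; I am therefore judging your proposal on its own terms.

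Your high-level plan is sensible and in the spirit of Frankl's shifting proofs: shift on $[2,n]$ so that $\hf_1$ and $\hf_{\bar 1}$ are preserved, extract $T=[2,k+1]\in\hf_{\bar 1}$, deduce $\hf_1'\subset\{A\in\binom{[2,n]}{k-1}\colon A\cap T\neq\emptyset\}$, and then trade the size of $\hf_{\bar 1}$ against the missing mass in that set. The arithmetic at the end is also fine (the display $|\hf_{\bar 1}|-1\le \binom{n-1}{k-1}-\binom{n-k-1}{k-1}-|\hf_1|$ alone already rearranges to the desired bound). The problem is that the entire weight of the proof rests on the injective assignment $G\mapsto A_G$ into the complement of $\hf_1'$ inside $\{A\colon A\cap T\neq\emptyset\}$, and this is asserted rather than constructed. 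The naive recipe ``pick any $(k-1)$-subset of $[2,n]\setminus G$ meeting $T$'' is not injective, and I do not see how ``shiftedness of $\hf_{\bar 1}$'' and $n>2k$ repair it: a $(k-1)$-set carries strictly less information than a $k$-set, and the obvious encodings (e.g.\ $A_G\cap T=T\setminus G$ padded with a canonical complement) determine $G\cap T$ but lose $G\setminus T$, so collisions occur whenever $|T\setminus G|<|G\setminus T|$ or two $G$'s share the same trace on $T$. In other words, this step is not a routine verification but the actual mathematical content of the theorem, and it is not done. A clean way to close it is to invoke Hilton's lemma/Kruskal--Katona on the cross-intersecting pair $(\hf_1',\hf_{\bar 1})$ together with the fact that $\hf_{\bar 1}$ is itself intersecting and contains $T$; this is exactly what the Frankl--F\"uredi proof and the paper's Theorem~\ref{thm-f87} do, and it replaces the phantom injection by an exact lexicographic computation.

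A second, smaller issue is the degeneracy under $[2,n]$-shifting: you acknowledge that some $j\in[2,n]$ (necessarily $j=2$) could become universal, but no ``supplementary argument'' is given. This case is fillable (for instance by stopping one shift early, noting that $\{1,j\}$ is then a transversal, and applying the Frankl--Tokushige bound \eqref{ft92} to the cross-intersecting pair $\hf(1,\bar j),\hf(\bar 1,j)\subset\binom{[n]\setminus\{1,j\}}{k-1}$ together with $|\hf(1,j)|\le\binom{n-2}{k-2}$), but as written it is a gap. I would also note that the choice of $1$ as a maximum-degree vertex is never actually used downstream; for the non-star hypothesis it suffices that $\hf_{\bar 1}\neq\emptyset$ for every vertex.
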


Let us define the Hilton-Milner Family
\[
\hh(n,k) = \left\{F\in \binom{[n]}{k}\colon 1\in F,\ F\cap [2,k+1]\neq \emptyset\right\}\cup \{[2,k+1]\},
\]
showing that \eqref{ineq-hm} is best possible.

Let us define also the triangle family
\[
\hht(n,k) =\left\{T\in \binom{[n]}{k}\colon |T\cap [3]|\geq 2\right\}.
\]

 It is easy to check that $\hh(n,2)=\hht(n,2)$ and $|\hh(n,3)|=|\hht(n,3)|$ but they are not isomorphic.
 For $k\geq 4$, $|\hh(n,k)|>|\hht(n,k)|$. By now there are myriads of results proving and reproving, strengthening and generalizing these classical results \cite[etc.]{AK,Borg,F78-2,F2017,FFuredi,FT92,FW2022,GH,KZ,Mors}

Define
\[
\hf(i)=\left\{F\setminus \{i\}\colon i\in F\in \hf\right\},\ \hf(\bar{i})= \{F\colon i\notin F\in \hf\}
\]
and note that $|\hf|=|\hf(i)|+|\hf(\bar{i})|$. It should be pointed out that $\hf(1)\subset \binom{[2,n]}{k-1}$ and $\hf(\bar{1})\subset\binom{[2,n]}{k}$ are cross-intersecting.

There are several ways of measuring how far an intersecting family is from a star. Let us present two of them first. Define
\[
\varrho(\hf) =\max_{i\in [n]} \frac{|\hf(i)|}{|\hf|},\ \gamma(\hf) = \min_{i\in [n]} |\hf(\bar{i})|.
\]
If $\hf$ is a star then $\varrho(\hf) =1$ and $\gamma(\hf)=0$. That is, the larger $\varrho(\hf)$ is and the smaller $\gamma(\hf)$ is, the closer $\hf$ is to a star. In case of the Hilton-Milner Family $\gamma(\hf)=1$ and $\varrho(\hf)\rightarrow 1$ as $n \rightarrow \infty$.

Before proceeding further let us define a sequence of intersecting families bridging the triangle family $\hht(n,k)$ to the Hilton-Milner family $\hh(n,k)$.

For $2\leq r\leq k$ define
\[
\ha_r(n,k) =\left\{A\in \binom{[n]}{k}\colon 1\in A, A\cap [2,r+1]\neq \emptyset\right\} \bigcup \left\{A\in \binom{[n]}{k}\colon 1\notin A,[2,r+1]\subset A\right\}.
\]
Clearly, $\ha_2(n,k)=\hht(n,k)$, $\ha_k(n,k) = \hh(n,k)$ and $\gamma(\ha_r(n,k))=\binom{n-r-1}{k-r}$. Note also that 1 is the element of highest degree in $\ha_r(n,k)$ and this degree is
\[
\Delta(\ha_r(n,k)) =\binom{n-1}{k-1} -\binom{n-r-1}{k-1}.
\]

Let us note also that $|\ha_2(n,k)|=|\ha_3(n,k)|$. This can be shown by manipulating with binomial coefficients. However, it is simpler to consider the trace of both families on [4]:
\begin{align*}
\ha_2(n,k) &= \left\{A\in \binom{[n]}{k}\colon |A\cap [4]|\geq 3 \mbox{ \rm or } A\cap [4]\in \{(1,2),(1,3),(2,3)\}\right\},\\[5pt]
\ha_3(n,k) &= \left\{A\in \binom{[n]}{k}\colon |A\cap [4]|\geq 3 \mbox{ \rm or } A\cap [4]\in \{(1,2),(1,3),(1,4)\}\right\}.
\end{align*}

In any case, for fixed $k$ and $n\rightarrow \infty$,
\begin{align*}
\varrho(\ha_2(n,k)) = \frac{2}{3}+o(1) \mbox{ \rm while } \varrho(\ha_r(n,k))& = 1-o(1) \mbox{ \rm for } 3\leq r\leq k.
\end{align*}

In \cite{F78-2} the sunflower method was introduced and it was proved that $|\hf|>|\ha_2(n,k)|$ implies $\varrho(\hf) =1-o(1)$. The sunflower method has proved very powerful in solving several related and unrelated problems in extremal set theory (cf. e.g. \cite{Furedi,FFuredi2}) however it only works for $n>n_0(k)$, even in the best case for $n>ck^2$ (cf. \cite{FW2022-2}).

Another powerful method is the use of the Kruskal-Katona Theorem (\cite{Kruskal,Katona66}), especially its reformulation due to Hilton \cite{Hilton}. To state it recall the {\it lexicographic order} $A <_{L} B$ for $A,B\in \binom{[n]}{k}$ defined by, $A<_L B$ iff $\min\{i\colon i\in A\setminus B\}<\min\{i\colon i\in B\setminus A\}$. E.g., $(1,2,9)<_L(1,3,4)$.

For $n>k>0$ and $\binom{n}{k}\geq m>0$ let $\hl(n,k,m)$ denote the first $m$ sets $A\in \binom{[n]}{k}$ in the lexicographic order.

{\noindent\bf Hilton's Lemma (\cite{Hilton}).} Let $n,a,b$ be positive integers, $n\geq a+b$. Suppose that $\ha\subset \binom{[n]}{a}$ and $\hb\subset \binom{[n]}{b}$ are cross-intersecting. Then $\hl(n,a,|\ha|)$ and $\hl(n,b,|\hb|)$ are cross-intersecting as well.

This result played a crucial role in the proof of the following.

Define $\Delta(\hf)=\max\{|\hf(i)|\colon 1\leq i\leq n\}$, the {\it maximum degree}.

\begin{thm}[\cite{F87-2}]\label{thm-f87}
Let $n>2k\geq 4$ be integers. Suppose that $\hf\subset\binom{[n]}{k}$ is intersecting, $\Delta(\hf)\leq \Delta(\ha_r(n,k))$ for some $2\leq r\leq k$. Then
\begin{align}\label{ineq-f87}
|\hf|\leq |\ha_r(n,k)|.
\end{align}
\end{thm}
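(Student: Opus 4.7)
The plan is to apply Hilton's Lemma to reduce to a cross-intersection problem between lex-initial segments, and then read off the bound from their structure. Assume without loss of generality that $1$ achieves the maximum degree in $\hf$; write $d=|\hf(1)|$, $e=|\hf(\bar 1)|$, and set $d_0:=\Delta(\ha_r(n,k))=\binom{n-1}{k-1}-\binom{n-r-1}{k-1}$ and $e_0:=\binom{n-r-1}{k-r}$, so that $|\ha_r(n,k)|=d_0+e_0$ and $|\hf|=d+e$. By hypothesis $d\leq d_0$, so it suffices to prove $d+e\leq d_0+e_0$.

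Since $\hf$ is intersecting, $\hf(1)\subset\binom{[2,n]}{k-1}$ and $\hf(\bar 1)\subset\binom{[2,n]}{k}$ are cross-intersecting, with $|[2,n]|=n-1\geq(k-1)+k$ because $n>2k$. I would identify $[2,n]$ with $[n-1]$ in increasing order and apply Hilton's Lemma: the lex-initial segments $\ha:=\hl(n-1,k-1,d)$ and $\hb:=\hl(n-1,k,e)$ are cross-intersecting. A hockey-stick computation $\sum_{i=2}^{r+1}\binom{n-i}{k-2}=\binom{n-1}{k-1}-\binom{n-r-1}{k-1}=d_0$ reveals that the lex-initial segment of size exactly $d_0$ in $\binom{[2,n]}{k-1}$ coincides with the family of $(k-1)$-subsets of $[2,n]$ meeting $[2,r+1]$ (equivalently, those whose minimum lies in $[2,r+1]$).

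In the tight case $d=d_0$, $\ha$ is precisely this family, and I would argue that any $k$-subset $B\subset[2,n]$ cross-intersecting $\ha$ must contain $[2,r+1]$: otherwise some $j\in[2,r+1]\setminus B$ exists, and since $|[2,n]\setminus(B\cup\{j\})|=n-k-2\geq k-2$ by $n>2k$, we can pick a $(k-2)$-subset $T$ of that complement, whence $\{j\}\cup T\in\ha$ is disjoint from $B$, contradiction. Consequently $\hb\subset\{B\in\binom{[2,n]}{k}:[2,r+1]\subset B\}$, giving $e\leq\binom{n-r-1}{k-r}=e_0$ and the theorem.

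For $d<d_0$ the task is to trade the deficit $s:=d_0-d$ against the potential excess of $e$ over $e_0$. When $0<s\leq\binom{n-r-1}{k-2}$, $\ha$ is obtained from the family of $(k-1)$-sets meeting $[2,r+1]$ by deleting the $s$ lex-largest members, all of which have minimum element $r+1$. Re-running the argument above, a $k$-set $B$ cross-intersecting $\ha$ must contain $[2,r]$, and it either also contains $r+1$ (so $[2,r+1]\subset B$, contributing at most $\binom{n-r-1}{k-r}$ sets) or it avoids $r+1$, in which case $B\setminus[2,r]$, viewed as a $(k-r+1)$-subset of $[r+2,n]$, must cross-intersect the lex-initial segment of size $\binom{n-r-1}{k-2}-s$ in $\binom{[r+2,n]}{k-2}$, a secondary cross-intersection problem again tractable via Hilton's Lemma. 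The main obstacle I anticipate is the careful binomial bookkeeping required to verify that the number of $B$'s of this second type is at most $s$, which would yield $e\leq e_0+s$ and close the inequality. For larger $s$ (where $d$ falls into an earlier lex-block, effectively corresponding to a smaller $r$), the same mechanism iterates by replacing $r$ with $r-1$ and peeling off another layer, so the entire analysis can be packaged either as a block-by-block direct count or as a downward induction on $r$.
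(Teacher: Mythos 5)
Your use of Hilton's Lemma is indeed the right tool — the paper explicitly says this lemma "played a crucial role" in the proof of this theorem — and your treatment of the tight case $d=d_0$ is correct. However, your reduction has a genuine gap in the deficit case $d<d_0$, and it is not merely the "careful binomial bookkeeping" you anticipate: the intermediate claim is actually false. You assert that cross-intersection of the lex-initial surrogates together with $|\ha|\leq d_0$ forces $|\ha|+|\hb|\leq d_0+e_0$. But take $\ha=\{A\in\binom{[n-1]}{k-1}:1\in A\}$ and $\hb=\{B\in\binom{[n-1]}{k}:1\in B\}$; these are lex-initial and cross-intersecting, with $|\ha|=\binom{n-2}{k-2}\leq d_0$ yet $|\ha|+|\hb|=\binom{n-1}{k-1}>|\ha_r(n,k)|$ for every $r$. (For instance, with $k=3$, $n=8$, $r=2$: $|\ha|=6\leq 11=d_0$ but $|\ha|+|\hb|=21>16=|\ha_2(8,3)|$.) Your secondary cross-intersection lemma, which is supposed to cap the excess of $|\hb|$ by the deficit $s$, likewise fails at $s=\binom{n-r-1}{k-2}$, and for $r=2$ the uniformities in the secondary problem are $k-2$ versus $k-r+1=k-1$, making it structurally the wrong shape to close the induction.

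The reason this configuration cannot actually occur is that it would force the maximum degree of $\hf$ to sit at a vertex $x\neq 1$ (indeed $\hf$ would be a star at $x$), contradicting your choice of $1$ as the max-degree vertex. But this constraint — that \emph{every} $|\hf(j)|\leq d$, not just $|\hf(1)|\leq d_0$ — is exactly what is lost when you replace $\hf(1)$ and $\hf(\bar1)$ by abstract lex-initial families: Hilton's Lemma preserves only the cross-intersecting property and the two sizes. Equivalently, as the paper points out, the actual proof in \cite{F87-2} runs under the hypothesis $\gamma(\hf)\geq\binom{n-r-1}{k-r}$, i.e.\ a uniform \emph{lower} bound on $|\hf(\bar j)|$ over all $j$, which must be brought into the Kruskal-Katona/Hilton argument and cannot be encoded by bounding $|\hf(1)|$ alone. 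So the skeleton of your proof (vertex split, Hilton's Lemma, lex-initial reduction) is aligned with the intended approach, but the deficit analysis as written cannot be completed.
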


At the time of the writing of the above paper the notion of diversity did not exist yet (it is due to Lemmons and Palmer \cite{LP}). However, the actual proof yields \eqref{ineq-f87} under the assumption $\gamma(\hf)\geq \gamma(\ha_r(n,k))=\binom{n-r-1}{k-r}$. Let us mention that Kupavskii and Zakharov \cite{KZ} extended this result by proving strong upper bounds on $|\hf|$ under similar assumptions on $\gamma(\hf)$.

Note that $\gamma(\ha_k(n,k))=1$ and $\gamma(\ha_{k-1}(n,k))=n-k$. In a still unpublished paper Kupavskii \cite{Ku2} proves best possible bounds in the range $2\leq \gamma(\hf)<n-k$, thereby improving earlier results by Han and Kohayakawa \cite{HK}.

Another important tool to tackle intersecting families is shifting that can be tracked back to Erd\H{o}s-Ko-Rado \cite{ekr}. We are going to give the formal definition in Section 2 but let us define here the ``end product" of shifting. Let $(x_1,\ldots,x_k)$ denote the set $\{x_1,\ldots,x_k\}$ where we know or want to stress that $x_1<\ldots<x_k$.
Define the {\it shifting partial order} $\prec$ by setting $(a_1,\ldots,a_k)\prec (b_1,\ldots,b_k)$ iff $a_i\leq b_i$ for $1\leq i\leq k$. Then $\ha\subset \binom{[n]}{k}$ is called {\it initial} iff for all  $A,B\in \binom{[n]}{k}$, $A\prec B$ and $B\in \ha$ imply $A\in \ha$.

\begin{fact}\label{fact-1}
Let $n>3k-2$, $k\geq 2$. Suppose that $\hf\subset \binom{[n]}{k}$ is intersecting and initial. Then
\begin{align}\label{ineq-diversity}
\gamma(\hf)\leq \binom{n-3}{k-2}.
\end{align}
\end{fact}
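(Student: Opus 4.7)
My plan is to let $\hh = \hf(\bar 1)$ and show $|\hh| \leq \binom{n-3}{k-2}$; since $\hf$ is initial, the element $1$ attains the maximum degree of $\hf$, so that $\gamma(\hf) = |\hh|$.

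The crucial step is to show that $\hh$ is in fact $2$-intersecting on the ground set $[2,n]$. Fix $F' = (a_1,\ldots,a_k) \in \hh$ (so $a_1 \geq 2$) and, for each index $i \in [k]$, consider
\[
G_i \;:=\; \bigl(\{1\}\cup F'\bigr)\setminus\{a_i\},
\]
which sorted reads $(1, a_1, \ldots, a_{i-1}, a_{i+1}, \ldots, a_k)$. A coordinatewise comparison (the $j$-th entry of $G_i$ equals $a_{j-1}$ for $j \leq i$ and $a_j$ for $j > i$, with $a_0 := 1$) shows $G_i \prec F'$ in the shifting order. By initialness of $\hf$ we get $G_i \in \hf$, and since $1 \in G_i$ in fact $G_i \in \hf(1)$. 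For any $F \in \hh$ the intersecting property gives $F \cap G_i \neq \emptyset$, and because $1 \notin F$ this simplifies to
\[
F \cap (F'\setminus\{a_i\}) \;\neq\; \emptyset \qquad \text{for every } i \in [k].
\]
If $|F \cap F'| \leq 1$, say $F \cap F' \subseteq \{a_j\}$, then taking $i = j$ yields $F \cap (F'\setminus\{a_j\}) = \emptyset$, a contradiction. Hence $|F \cap F'| \geq 2$, establishing that $\hh$ is $2$-intersecting.

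With $\hh$ a $2$-intersecting family of $k$-subsets of a universe of size $n - 1$, I would invoke Frankl's $t$-intersecting theorem for $t = 2$: once $m \geq 3(k-1)$, any $2$-intersecting family in $\binom{[m]}{k}$ has at most $\binom{m-2}{k-2}$ members. The hypothesis $n > 3k - 2$ gives $m = n - 1 \geq 3k - 2 > 3(k-1)$, so the theorem applies and yields
\[
|\hh| \;\leq\; \binom{(n-1) - 2}{k-2} \;=\; \binom{n-3}{k-2},
\]
which is the desired bound on $\gamma(\hf)$.

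The only non-routine step is the $2$-intersecting reduction via the $G_i$ construction; once this is observed, Frankl's $t$-intersecting theorem closes the argument immediately. I expect no serious obstacle beyond checking the coordinatewise domination $G_i \prec F'$ and verifying that the threshold condition for Frankl's theorem is met with room to spare under $n > 3k - 2$.
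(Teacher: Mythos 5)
Your proof is correct and takes essentially the same route as the paper: show $\hf(\bar 1)$ is $2$-intersecting by using initiality to replace one element of a set in $\hf(\bar1)$ by $1$ (the paper isolates this as its Fact~\ref{fact-2}; your $G_i$ construction is the same swap phrased by enumerating all candidate swaps and then arguing by contradiction), and then apply the exact $t$-intersecting Erd\H{o}s--Ko--Rado theorem with $t=2$ on the ground set $[2,n]$ of size $n-1\geq 3(k-1)$. Your additional observation that $1$ has maximum degree in an initial family is correct but unnecessary, since $\gamma(\hf)\leq|\hf(\bar1)|$ already follows from $\gamma$ being a minimum.
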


For the proof let us recall that for $t\geq 1$, the families $\hf,\hg\subset 2^{[n]}$ are called cross $t$-intersecting if $|F\cap G|\geq t$ for all $F\in \hf$, $G\in \hg$.

\begin{fact}\label{fact-2}
Suppose that $\hf,\hg\subset 2^{[n]}$ are initial and cross-intersecting. Then $\hf(\bar{1})$ and $\hg(\bar{1})$ are cross 2-intersecting.
\end{fact}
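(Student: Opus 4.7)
The plan is a short proof by contradiction exploiting the initiality of $\hg$ (symmetrically, $\hf$) to ``shift'' a purported single-element intersection down to the element $1$, which by assumption is not used on either side.

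First, I would pick arbitrary $F\in \hf(\bar 1)$ and $G\in \hg(\bar 1)$ and suppose toward a contradiction that $|F\cap G|\leq 1$. Since $\hf$ and $\hg$ are cross-intersecting, this forces $|F\cap G|=1$, say $F\cap G=\{j\}$, and because $1\notin F\cup G$ we automatically have $j\geq 2$.

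Next I would produce the ``shifted'' set $G':=(G\setminus\{j\})\cup\{1\}$. Since $|G'|=|G|$ and the only change is that the coordinate previously occupied by $j$ has been replaced by the smaller value $1$, one checks directly that $G'\prec G$ in the shifting partial order. Because $\hg$ is initial, this gives $G'\in \hg$.

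Finally I would compute $F\cap G'$. The element $j$ has been removed from $G$, so $F\cap(G\setminus\{j\})=\emptyset$; and $1\notin F$ since $F\in \hf(\bar 1)$. Hence $F\cap G'=\emptyset$, contradicting the cross-intersecting property of $\hf$ and $\hg$. Therefore every pair in $\hf(\bar 1)\times \hg(\bar 1)$ must share at least two elements, which is the claimed cross $2$-intersecting property. There is really no obstacle here: the whole point is that initiality upgrades ``$|F\cap G|\geq 1$ with $1\notin F\cup G$'' to a genuine $2$-intersection, and the one-line swap $j\mapsto 1$ makes this transparent.
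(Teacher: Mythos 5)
Your proof is correct and is essentially the paper's argument: both derive a contradiction by replacing the unique intersection point $j$ with $1$ and invoking initiality, the only cosmetic difference being that you shift $G$ while the paper shifts $F$.
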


\begin{proof}
Suppose for contradiction that for some $F\in \hf(\bar{1})$, $G\in \hg(\bar{1})$ and $2\leq j\leq n$, $F\cap G=\{j\}$ holds. Since $(F\setminus \{j\})\cup \{1\}=:F' \prec F$, $F'\in \hf$. However $F'\cap G=\emptyset$, a contradiction.
\end{proof}

\begin{proof}[Proof of Fact \ref{fact-1}]
In view of Fact \ref{fact-2},  $\hf(\bar{1})\subset \binom{[2,n]}{k}$ is 2-intersecting. For $n-1\geq 3(k-1)$ we may apply the Exact Erd\H{o}s-Ko-Rado Theorem (\cite{F78,Wi}) to infer $|\hf(\bar{1})|\leq \binom{(n-1)-2}{k-2}=\binom{n-3}{k-2}$.
\end{proof}

The above fact motivated the first author to conjecture in \cite{F17} that \eqref{ineq-diversity} holds for $n>3k$ for non-initial intersecting families as well. Counter-examples by Huang \cite{huang} and Kupavskii \cite{Ku1} showed that one needs to assume at least $n\geq (2+\sqrt{3})k$.

In the same paper Kupavskii showed that \eqref{ineq-diversity} holds for all intersecting families as long as $n>Ck$ where $C$ is a non-specified but very large constant. In \cite{F2020} the same was proved for $n> 72k$. The main result of the present paper is the following.

\begin{thm}\label{thm-main0}
Suppose that $n> 36k$, $\hf\subset \binom{[n]}{k}$ is intersecting. Then
\begin{align}\label{ineq-main1}
\gamma(\hf)\leq \binom{n-3}{k-2}.
\end{align}
\end{thm}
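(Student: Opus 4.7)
Note the identity $\gamma(\hf)=|\hf|-\Delta(\hf)$: the min over $i$ of $|\hf(\bar{i})|$ is achieved at the element of maximum degree. Relabel so that element $1$ has maximum degree, so $\gamma(\hf)=|\hf(\bar{1})|$. Suppose for contradiction that $|\hf(\bar{1})|>\binom{n-3}{k-2}$. The plan is to ``compress around $1$'' so as to keep this inequality while making the family combinatorially tractable, then push it to a contradiction via the cross-intersecting structure of $\hf(1)$ and $\hf(\bar{1})$.

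\textbf{Shifting reduction.} Apply every shift $S_{ij}$ with $2\le i<j\le n$. These keep $\hf$ intersecting, preserve $|\hf(1)|$ and $|\hf(\bar{1})|$ individually, preserve the cross-intersection between $\hf(1)\subset\binom{[2,n]}{k-1}$ and $\hf(\bar{1})\subset\binom{[2,n]}{k}$, and after completion render both families \emph{initial} on the ground set $[2,n]$. Since $|\hf(\bar{1})|$ is unchanged, it still suffices to rule out $|\hf(\bar{1})|>\binom{n-3}{k-2}$ on the compressed family. Now $\hf(\bar{1})$ is an initial intersecting $k$-graph on $[2,n]$, so applying Fact~\ref{fact-1} there (using $n-1>3(k-1)-2$) we obtain
\[
|\hf(\bar{1})(\bar{2})| \le \binom{(n-1)-3}{k-2} = \binom{n-4}{k-2},
\]
which, combined with the contradiction hypothesis, forces
\[
|\hf(\bar{1})(2)| > \binom{n-3}{k-2}-\binom{n-4}{k-2} = \binom{n-4}{k-3}.
\]

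\textbf{Exploiting cross-intersection and concluding.} By Fact~\ref{fact-2} applied to the initial cross-intersecting pair $(\hf(1),\hf(\bar{1}))$, the pair $(\hf(1)(\bar{2}),\hf(\bar{1})(\bar{2}))$ is cross $2$-intersecting on $[3,n]$; also $\hf(1)(\bar{2})$ is cross-intersecting with $\hf(\bar{1})(2)$ on $[3,n]$, where the latter has size exceeding $\binom{n-4}{k-3}$. Invoking Hilton's lemma to replace $\hf(\bar{1})(2)$ by the lex-initial $(k-1)$-family of the same size on $[3,n]$, any $(k-1)$-set in $\hf(1)(\bar{2})$ must meet every set in an initial family that includes $\{3,4\}\cup T$ for all $T\in\binom{[5,n]}{k-3}$ (and more), which sharply constrains $\hf(1)(\bar{2})$. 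Adding back the $2$-containing part via the $2$-intersection with $\hf(\bar{1})(\bar{2})$ yields an upper bound of the form $|\hf(1)|\le \binom{n-1}{k-1}-c\,\binom{n-3}{k-1}$ with an effective constant $c$. Because $1$ was chosen of maximum degree, averaging gives $|\hf(1)|\ge k|\hf|/n$, while $|\hf|=|\hf(1)|+|\hf(\bar{1})|$ together with the contradiction hypothesis forces $|\hf(1)|$ to be \emph{large}. Comparing the upper and lower bounds on $|\hf(1)|$ produces an inequality in $n,k$ that fails precisely when $n>36k$, giving the contradiction.

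\textbf{Main obstacle.} The entire argument is soft except for Step~3, where the quantitative trade-off between $|\hf(1)|$ and $|\hf(\bar{1})|$ has to be squeezed enough to replace the constant $72$ by $36$. Theorem~\ref{thm-f87} and the Kupavskii--Zakharov extensions give clean degree-versus-size bounds only when $\Delta(\hf)$ is away from $\binom{n-1}{k-1}$; the delicate regime is when $\hf$ is close to a star, i.e. $\Delta(\hf)$ is nearly $\binom{n-1}{k-1}$ while $\gamma(\hf)$ hovers just above $\binom{n-3}{k-2}$. Here one needs the sharper ``strong bounds on maximum degree of large intersecting families'' flagged in the abstract, obtained by a more refined Hilton-lemma/Kruskal--Katona analysis split into cases according to the lex-position of $\hf(\bar{1})$. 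Calibrating the constants in those cases is the principal technical challenge.
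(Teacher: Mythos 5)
The paper's own route is quite different from yours. It first proves Theorem~\ref{thm-main1}, the statement that $|\hf|\geq 36\binom{n-3}{k-3}$ forces $\varrho(\hf)>\tfrac12$, via shifting ad extremis with respect to the property $\{\varrho(\hf)\leq\tfrac12\}$: if the shift-resistant graph $\mathds{H}$ is empty one uses initiality plus the Katona Intersecting Shadow Theorem, otherwise Lemma~\ref{lem-3.5} rules out a $3$-matching in $\mathds{H}$ and a two-case analysis (matching number $1$ or $2$) yields a contradiction. Theorem~\ref{thm-main0} is then obtained as a black-box consequence, by plugging Theorem~\ref{thm-main1} into the argument of Theorem~5 of \cite{F2020}, which turns a $\varrho(\hf)>\tfrac12$ statement for $n\geq ck$ into the diversity bound $\gamma(\hf)\leq\binom{n-3}{k-2}$ for $n>ck$. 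Your proposal uses none of this: no detour through $\varrho$, no ad extremis shifting preserving a property, no analysis of a shift-resistant graph.

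Your reduction step is legitimate and it is a genuinely different angle: shifting $S_{ij}$ only for $2\leq i<j\leq n$ preserves both $|\hf(1)|$ and $|\hf(\bar1)|$, keeps $\hf$ intersecting, and makes $\hf(1)$ and $\hf(\bar1)$ initial on $[2,n]$, so Fact~\ref{fact-1} (the condition is $n-1>3k-2$, since $\hf(\bar1)$ is still $k$-uniform) gives $|\hf(\bar1)(\bar2)|\leq\binom{n-4}{k-2}$ and hence $|\hf(\bar1)(2)|>\binom{n-4}{k-3}$. But from there the argument does not close, and the key step is actually wrong as stated: the averaging inequality $|\hf(1)|\geq k|\hf|/n$ gives $|\hf(1)|\geq|\hf|/36$ at best when $n>36k$, which is a very weak \emph{lower} bound. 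You have no lower bound on $|\hf|$ beyond $|\hf|\geq|\hf(\bar1)|>\binom{n-3}{k-2}$ (an order of magnitude smaller than $\binom{n-1}{k-1}$), so this cannot be played off against an upper bound of the shape $\binom{n-1}{k-1}-c\binom{n-3}{k-1}$. In fact, nothing in your set-up forces $|\hf(1)|$ to be large relative to $|\hf(\bar1)|$ — that is exactly the difficulty the paper's $\varrho(\hf)>\tfrac12$ machinery is built to handle, and your ``Main obstacle'' paragraph explicitly acknowledges that the quantitative step (getting $36$ rather than $72$) is left open. So what you have is an interesting alternative reduction to the initial case with one fixed element, not a proof.
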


The main tool in the proof of \eqref{ineq-main1} is the following.

\begin{thm}\label{thm-main1}
Suppose that $\hf\subset \binom{[n]}{k}$ is an intersecting family with $n\geq 2k$ and $|\hf|\geq 36\binom{n-3}{k-3}$. Then
\begin{align}\label{ineq-main2}
\varrho(\hf)>\frac{1}{2}.
\end{align}
\end{thm}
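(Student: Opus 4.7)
The plan is to argue by contradiction. Suppose $\varrho(\hf)\le 1/2$; after relabeling $[n]$ we may assume the maximum degree is attained at element $1$, so $|\hf(1)|\le |\hf|/2$ and $|\hf(\bar 1)| \ge |\hf|/2 \ge 18\binom{n-3}{k-3}$. Writing $\ha=\hf(1)\subset\binom{[2,n]}{k-1}$ and $\hb=\hf(\bar 1)\subset\binom{[2,n]}{k}$, the argument will rest on three structural facts: (i) $(\ha,\hb)$ is cross-intersecting on the $(n-1)$-element ground set $[2,n]$; (ii) $\hb$ is itself an intersecting family, as a subfamily of $\hf$; and (iii) because $1$ has maximum degree in $\hf$, every $i\in[2,n]$ satisfies $|\hb(i)|\le|\hf(i)|\le|\ha|$.

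Next, invoke Hilton's Lemma twice — to the cross-intersecting pair $(\ha,\hb)$ and to the self-pair $(\hb,\hb)$ — to replace $\ha$ and $\hb$ by their lex-initial segments $\hl(n-1,k-1,|\ha|)$ and $\hl(n-1,k,|\hb|)$ of the same cardinalities; both the cross-intersecting property and the intersectedness of $\hb$ survive this reduction. The subsequent analysis splits according to the size of $|\ha|$ relative to the threshold $\binom{n-2}{k-2}$. In the \emph{dense regime} $|\ha|\ge\binom{n-2}{k-2}$, the lex-initial $\ha$ already contains the full star of $2$ in $\binom{[2,n]}{k-1}$, and because $n\ge 2k$ the cross-intersecting condition forces every $B\in\hb$ to contain $2$. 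Combined with (iii) this yields $|\hb|\le|\ha|$, so in fact $|\ha|=|\hb|=|\hf|/2$ and $\hf=\hf_1\sqcup\hf_2$ splits into equal halves with no set containing both $1$ and $2$. Peeling off $1,2$ then produces a pair of cross-intersecting equal-size $(k-1)$-uniform families on $[3,n]$, to which a Frankl--Tokushige-type estimate (or an inductive reapplication of the same scheme) applies and forces $|\hf|<36\binom{n-3}{k-3}$. In the complementary \emph{sparse regime} $|\ha|<\binom{n-2}{k-2}$, Theorem~\ref{thm-f87} applied to the intersecting family $\hb$ with $\Delta(\hb)\le|\ha|$, combined with Kruskal--Katona bounds derived from the lex structure of $\hb$, yields the same conclusion more directly.

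The main obstacle is the tight numerical bookkeeping that produces the constant $36$ rather than something weaker. A naive equal-size cross-intersecting bound in the dense regime gives only $|\hf|\le 2\binom{n-3}{k-2}$, which for $n\gtrsim 20k$ already exceeds $36\binom{n-3}{k-3}$; the improvement from the constant $72$ of \cite{F2020} to $36$ therefore requires a more refined peeling that uses property (iii) beyond the very first step, presumably by iterating the reduction on the smaller cross-intersecting $(k-1)$-family pair on $[3,n]$ and re-exploiting a degree inequality at each stage. Making this iteration converge with the sharp constant is the technical heart of the proof.
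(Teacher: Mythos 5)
Your approach via Hilton's Lemma on the pair $(\hf(1),\hf(\bar 1))$ is genuinely different from the paper's, which instead performs \emph{shifting ad extremis} on $\hf$ itself (so that the constraint $\varrho(\hf)\le 1/2$ is never relinquished), analyzes the residual graph $\mathds{H}$ of shift-resistant pairs, and rules out matching numbers $0$, $1$, $2$ and $\ge 3$ separately using Katona's intersecting shadow theorem, Lemma~\ref{lem-3.5}, and cross $t$-intersecting bounds. However, your version has a concrete gap that I do not see how to repair.

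The problem is the step ``Combined with (iii) this yields $|\hb|\le|\ha|$'' in the dense regime. Property (iii), namely $|\hb(i)|\le|\hf(i)|\le|\ha|$ for all $i\in[2,n]$, is a fact about the \emph{original} family $\hf(\bar 1)$ and follows from $1$ being the element of maximum $\hf$-degree. But before invoking (iii) you have already replaced $\ha$ and $\hb$ by the lex-initial segments $\hl(n-1,k-1,|\ha|)$ and $\hl(n-1,k,|\hb|)$; Hilton's Lemma preserves sizes and the cross-intersecting (and, applied diagonally, the intersecting) property, but it does \emph{not} preserve any degree constraint. In the lex-initial $\hb$ every set contains $2$, so $|\hb(2)|=|\hb|$, and there is no reason for this to be $\le|\ha|$. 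What the lex argument genuinely gives in the dense regime is only $|\hb|\le\binom{n-2}{k-1}$, which is far too weak. Since the whole dense-regime chain (equal halves, ``no set contains both $1$ and $2$'', peeling off $\{1,2\}$) hangs on $|\hb|\le|\ha|$, the argument does not go through. You in fact half-concede this yourself at the end when you observe that the naive equal-size cross-intersecting bound only yields $|\hf|\le 2\binom{n-3}{k-2}$, which exceeds $36\binom{n-3}{k-3}$ once $n\gtrsim 20k$; the ``more refined peeling'' you gesture at is exactly the missing piece, and it is not supplied.

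There is a secondary issue in the sparse regime as well: you want to feed $\hb$ with $\Delta(\hb)\le|\ha|<\binom{n-2}{k-2}$ into Theorem~\ref{thm-f87} on the ground set $[2,n]$, which requires $\Delta(\hb)\le\Delta(\ha_2(n-1,k))=\binom{n-3}{k-2}+\binom{n-4}{k-2}$, i.e.\ (after cancelling $\binom{n-3}{k-2}$) $\binom{n-3}{k-3}\le\binom{n-4}{k-2}$. This fails near $n=2k$: e.g.\ for $n=10$, $k=5$ one has $\binom{7}{2}=21>20=\binom{6}{3}$. The theorem you are proving allows all $n\ge 2k$, so this route needs an extra argument in that range. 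The paper avoids both difficulties precisely because it shifts $\hf$ as a single intersecting family, so all the structural properties of $\hf$ (intersecting, diversity, degree sequence) evolve coherently, rather than passing to a lex-initial surrogate of the pair $(\hf(1),\hf(\bar 1))$ where the degree information is lost.
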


This and the next result provide considerable improvements on the bounds in \cite{F2020}.

\begin{thm}\label{thm-main2}
Suppose that $0<\varepsilon\leq\frac{1}{24}$, $n\geq \frac{k}{\varepsilon}$, $\hf\subset \binom{[n]}{k}$ is an intersecting family with $|\hf|\geq 36\binom{n-3}{k-3}$. Then $\varrho(\hf)>\frac{2}{3}-\varepsilon$.
\end{thm}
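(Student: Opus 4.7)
The plan is to first reduce $\hf$ to an initial intersecting family by shifting, which preserves $|\hf|$ and cannot decrease the maximum degree, so it suffices to prove the bound under the assumption that $\hf$ is initial; in this case $\varrho(\hf)=|\hf(1)|/|\hf|$. The target inequality $\varrho(\hf)>\frac{2}{3}-\varepsilon$ is equivalent to
\[
(1+3\varepsilon)\,|\hf(1)|\;>\;(2-3\varepsilon)\,|\hf(\bar 1)|,
\]
and since Theorem~\ref{thm-main1} already gives $|\hf(1)|>|\hf(\bar 1)|$ (i.e.\ $\varrho>1/2$) under the stated size hypothesis, the remaining task is to improve the coefficient on $|\hf(\bar 1)|$ from $1$ to $2$, up to an $O(\varepsilon)$ error.

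The key additional input is the $2$-intersecting structure of $\hf(\bar 1)$. By Fact~\ref{fact-2}, $\hf(\bar 1)\subset\binom{[2,n]}{k}$ is $2$-intersecting, and iterating the same shift argument inside $[2,n]$ shows that $\hf(\bar 1,\bar 2)\subset\binom{[3,n]}{k}$ is $3$-intersecting. Exact Erd\H{o}s--Ko--Rado then yields
\[
|\hf(\bar 1)|\leq\binom{n-3}{k-2},\qquad |\hf(\bar 1,\bar 2)|\leq\binom{n-5}{k-3}.
\]
Combined with $|\hf|\geq 36\binom{n-3}{k-3}$ and $n\geq k/\varepsilon$, the second estimate makes $|\hf(\bar 1,\bar 2)|$ a small fraction of $|\hf|$, and forces $\hf(\bar 1)$ to be close to the $2$-star $\{B:\{2,3\}\subset B\}$ whenever $|\hf(\bar 1)|$ is not already negligible.

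The final step exploits this near-$2$-star structure: every $F\in\hf$ containing $1$ must cross-intersect each $B\in\hf(\bar 1)$, and when $\hf(\bar 1)$ contains nearly all sets of the form $\{2,3\}\cup S$, this forces $F$ to meet $\{2,3\}$. Controlling $|\{F\in\hf:1\in F,\ F\cap\{2,3\}=\emptyset\}|$ by an $O(\varepsilon|\hf|)$ error term then allows one to estimate $|\hf(1)|$ as $(2+O(\varepsilon))|\hf(\bar 1)|$, delivering the target inequality. The main obstacle is making this ``closeness to the $2$-star'' argument effective in the intermediate regime where $|\hf(\bar 1)|$ is moderate --- neither so large that it is automatically close to a $2$-star, nor so small that the bound is immediate --- and it is exactly here that the precise constants $36$ and $\frac{1}{24}$ together with the hypothesis $n\geq k/\varepsilon$ come into play.
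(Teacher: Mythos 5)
Your opening reduction is the wrong way round, and that gap is fatal. You correctly observe that the shift $S_{ij}$ preserves $|\hf|$ and cannot decrease $\Delta(\hf)$ (indeed $|S_{ij}(\hf)(i)|\ge\max\{|\hf(i)|,|\hf(j)|\}$ while degrees of other elements are unchanged). But this means that fully shifting $\hf$ to an initial family $\hf'$ gives $\varrho(\hf')\ge\varrho(\hf)$, so proving the \emph{lower} bound $\varrho(\hf')>\frac23-\varepsilon$ for the initial family tells you nothing about $\varrho(\hf)$ itself. A family $\hf$ with small $\varrho$ can perfectly well shift to an initial family with large $\varrho$; the hypothesis of the theorem ($|\hf|\ge 36\binom{n-3}{k-3}$) is preserved, but the target conclusion is not ``pulled back'' along shifting. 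Shifting-to-initial is the right move when one wants an \emph{upper} bound on $|\hf|$ (size is preserved) or on $\varrho$, not when one wants a lower bound on $\varrho$. This is precisely why the paper cannot simply assume $\hf$ initial, and instead shifts \emph{ad extremis} with respect to the property $\{\varrho(\hf)\le\frac23-\varepsilon\}$ (the negation one is trying to refute): it applies $S_{ij}$ only so long as this property is maintained, and the argument then has to cope with the shift-resistant pairs that remain.

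Once that is understood, the bulk of the actual work is invisible in your sketch. After shifting ad extremis, the paper bounds $|\hf(P)|$ uniformly over all pairs $P$ via its Lemma on $|\hf(x,y)|$ vs.\ $|\hf(\bar x,\bar y)|$, shows (using Lemma~\ref{lem-3.5}) that the shift-resistant graph $\mathds H$ has matching number at most one, and then handles two genuinely different cases: $\mathds H=\emptyset$ (the initial case, close to your Case~1 idea, decomposing $\hf(\bar1)$ by its trace on $[3]$ and using $t$-intersecting bounds), and $\mathds H$ consisting of edges all through one pair, say $\{n-1,n\}$, where one passes to the initial families $\hg$ and $\hh$ on $[n-2]$ and derives a contradiction from cross-$2$-intersection. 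Your proposal has no mechanism for this second case because it starts by (incorrectly) assuming it away. The pieces you do mention --- $\hf(\bar1)$ being $2$-intersecting, $\hf(\bar1,\bar2)$ being $3$-intersecting, using exact $t$-intersecting EKR --- are all present in the paper's Case~1, but they do not constitute a proof of the theorem without the shifting-ad-extremis framework.
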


We should mention that the family $\widetilde{\hht} =\{T\in \binom{[n]}{k}\colon |T\cap [3]|=2\}$ satisfies $|\widetilde{\hht}|=3\binom{n-3}{k-2}$ and $\varrho(\widetilde{\hht})=\frac{2}{3}$ showing that $\frac{2}{3}$ is best possible.

Let us present some  results and notations that are needed in our proofs. The first one is an inequality concerning cross-intersecting families.

\begin{prop}[\cite{FW2022}]\label{prop-3.4}
Suppose that $\ha,\hb\subset \binom{[m]}{\ell}$ are cross-intersecting and $m\geq 2\ell$,
$\min\{|\ha|,|\hb|\}\geq \binom{m-3}{\ell-3}+\binom{m-4}{\ell-3}$. Then
\begin{align}
|\ha|+|\hb|\leq 2\binom{m-1}{\ell-1}.
\end{align}
\end{prop}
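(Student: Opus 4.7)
My plan is to apply the standard double-shifting argument and then invoke Hilton's Lemma to reduce the problem to one about lexicographically initial families, after which the inequality becomes a direct binomial computation.

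First I would iterate the simultaneous shifts $S_{ij}$ on the pair $(\ha,\hb)$ for $1\le i<j\le m$; each such shift preserves both $|\ha|$, $|\hb|$, and the cross-intersecting property, so after finitely many iterations we may assume both families are shift-initial. Applying Hilton's Lemma (quoted earlier in the excerpt), we can then further replace them by $\hl(m,\ell,|\ha|)$ and $\hl(m,\ell,|\hb|)$ without loss of generality.

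Next I would split into cases based on $\max(|\ha|,|\hb|)$. Assume WLOG $|\ha|\ge|\hb|$. If $|\ha|\le \binom{m-1}{\ell-1}$, then both families are contained in the star $\{F:1\in F\}$ and $|\ha|+|\hb|\le 2\binom{m-1}{\ell-1}$ trivially. Otherwise $|\ha|=\binom{m-1}{\ell-1}+s$ for some $s\ge 1$, and being lex-initial, $\ha$ contains the $s$ lex-smallest $\ell$-sets of $\binom{[2,m]}{\ell}$. In particular $[2,\ell+1]\in\ha$, so cross-intersection forces $B\cap[2,\ell+1]\ne\emptyset$ for every $B\in \hb$. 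Since $\hb$ is lex-initial, $|\hb|$ is at most the position in lex order of the first $\ell$-set disjoint from $[2,\ell+1]$, namely $\{1,\ell+2,\dots,2\ell\}$; a hockey-stick identity then yields $|\hb|\le \binom{m-1}{\ell-1}-\binom{m-\ell-1}{\ell-1}$.

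To close the loop I would use the lower bound hypothesis $|\hb|\ge \binom{m-3}{\ell-3}+\binom{m-4}{\ell-3}$: this forces $\hb$ (being lex-initial) to contain every $\ell$-set $B$ with $\{1,2\}\subset B$ and $B\cap\{3,4\}\ne\emptyset$. The cross-intersection of these sets with the ``lex-extra'' sets in $\ha\setminus\{F:1\in F\}$ then gives an upper bound on $s$. Combining this bound on $s$ with the above bound on $|\hb|$ delivers $|\ha|+|\hb|\le 2\binom{m-1}{\ell-1}$.

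\textbf{Main obstacle.} The target bound is tight --- equality holds when $\ha=\hb=\{F:1\in F\}$ --- so any slack in the estimates sinks the argument. The hardest part will be quantifying precisely how $|\hb|$ must shrink as $s$ grows, while simultaneously using the lower bound on $|\hb|$ to cap $s$, and ensuring the resulting two inequalities combine without loss. The condition $m\ge 2\ell$ is essential for the ``first bad'' lex set $\{1,\ell+2,\dots,2\ell\}$ to exist and for the complementary structure to be non-degenerate.
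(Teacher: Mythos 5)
Your opening reduction is sound: passing to lexicographically initial families of the same sizes via Hilton's Lemma is exactly the right first move (the preliminary round of simultaneous shifting is superfluous, since Hilton's Lemma accomplishes this in one stroke), the easy case $\max(|\ha|,|\hb|)\le\binom{m-1}{\ell-1}$ is handled correctly, and the estimate $|\hb|\le\binom{m-1}{\ell-1}-\binom{m-\ell-1}{\ell-1}$ forced by $[2,\ell+1]\in\ha$ is also correct. The gap is in the last step, the one you yourself flag as "the hardest part": to conclude you need $s+|\hb|\le\binom{m-1}{\ell-1}$, and the bound you derived on $|\hb|$ only delivers this when $s\le\binom{m-\ell-1}{\ell-1}$, which is false in general. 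For example, take $m=6$, $\ell=3$, so $\binom{m-\ell-1}{\ell-1}=1$; let $\ha=\hl(6,3,18)$ (the full star at $1$ together with the eight lex-smallest triples in $[2,6]$, so $s=8$) and $\hb=\hl(6,3,2)=\{\{1,2,3\},\{1,2,4\}\}$. This pair is cross-intersecting and satisfies the hypothesis with $|\ha|+|\hb|=20=2\binom{5}{2}$, attaining equality. Here $s=8\gg 1$, and the bound $|\hb|\le\binom{m-1}{\ell-1}-\binom{m-\ell-1}{\ell-1}=9$ is far from tight; the two estimates trade off against one another, and neither in isolation is strong enough. Quantifying this trade-off -- typically via an induction on $\ell$ (or on the ground set) that strips off the element $1$ and compares $\ha(\bar 1)$ with $\hb(1)$, tracking how the lower bound on the smaller family propagates -- is the actual mathematical content of the proposition, and it is missing from your outline. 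So what you have is a correct framing and correct preliminary estimates, but not a proof. (As a side remark: this proposition is imported from \cite{FW2022}; the present paper does not contain its proof, so there is no in-text argument to compare against.)
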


Define the family of {\it transversals}, $\hht(\hf)$ by
\[
\hht(\hf) =\left\{T\subset [n]\colon T\cap F\neq \emptyset \mbox{ for all } F\in \hf\right\}.
\]
Set $\hht^{(t)}(\hf) = \{T\in \hht(\hf)\colon |T|=t\}$.

For $P\subset Q\subset [n]$, let
\[
\hf(P,Q) = \left\{F\setminus Q\colon F\cap Q=P, F\in\hf\right\}\subset 2^{[n]\setminus Q}.
\]
We also use $\hf(\bar{Q})$ to denote $\hf(\emptyset, Q)$. For $\hf(\{i\},Q)$ we simply write  $\hf(i,Q)$.

%\begin{prop}[\cite{FW2022}]
%For $n\geq 2k$,
%\begin{align}\label{ineq-key4}
%\binom{n-k-1}{k-1} <\left(\frac{n-\frac{3k}{2}}{n-\frac{k}{2}}\right)^{k-1}\binom{n-1}{k-1}.
%\end{align}
%\end{prop}

Let us recall the following inequalities concerning cross $t$-intersecting families.

\begin{thm}[\cite{F78}]\label{thm-tintersecting}
Suppose that $\ha,\hb\subset \binom{[n]}{k}$ are cross $t$-intersecting, $|\ha|\leq |\hb|$. Then either
\begin{align}
|\hb|\leq \binom{n}{k-t} \mbox{ \rm or }\label{ineq-tinterhb}\\[5pt]
|\ha| \leq \binom{n}{k-t-1}.\label{ineq-tinterha}
\end{align}
\end{thm}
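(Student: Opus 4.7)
My plan is to proceed by shifting followed by a double induction on $(n,k-t)$ ordered lexicographically, with the smaller $n$ taking precedence.

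First I would verify that for each $1\le i<j\le n$ the shift $s_{ij}$ preserves the cross-$t$-intersecting property: this is a routine case analysis analogous to the classical cross-intersecting case. Thus one may assume both $\ha$ and $\hb$ are initial with respect to the shifting partial order $\prec$.

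Next I decompose both families according to membership of $1$: set $\ha(1)\subset\binom{[2,n]}{k-1}$, $\ha(\bar{1})\subset\binom{[2,n]}{k}$, and likewise for $\hb$. Over the reduced ground set $[2,n]$ of size $n-1$, the pair $\ha(1),\hb(1)$ remains cross-$(t-1)$-intersecting. Crucially, by the same initiality trick as in Fact~\ref{fact-2}, now swapping a minimal element of $A\cap B$ for $1$, the pair $\ha(\bar{1}),\hb(\bar{1})$ is cross-$(t+1)$-intersecting on $[2,n]$.

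Then I invoke the inductive hypothesis twice. Applied to $\ha(1),\hb(1)$ with parameters $(k-1,t-1)$ on $n-1$ points (so $k-t$ is unchanged but $n$ decreases), it yields $|\hb(1)|\le\binom{n-1}{k-t}$ or $|\ha(1)|\le\binom{n-1}{k-t-1}$. Applied to $\ha(\bar{1}),\hb(\bar{1})$ with parameters $(k,t+1)$ on $n-1$ points (both $n$ and $k-t$ decrease), it yields $|\hb(\bar{1})|\le\binom{n-1}{k-t-1}$ or $|\ha(\bar{1})|\le\binom{n-1}{k-t-2}$. Summing the two ``good'' bounds and invoking Pascal's rule $\binom{n-1}{k-t}+\binom{n-1}{k-t-1}=\binom{n}{k-t}$ together with its shift recovers $|\hb|\le\binom{n}{k-t}$ and $|\ha|\le\binom{n}{k-t-1}$.

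The principal obstacle is that the two inductive conclusions are each a dichotomy, so four case combinations arise and in two of them the ``bad'' coordinate for $\hb$ is paired with the ``good'' coordinate for $\ha$ (or vice versa). Resolving these cross-cases requires a more delicate argument: either tightening the inductive statement so that, e.g., $|\hb(\bar{1})|>\binom{n-1}{k-t-1}$ forces structural constraints on $\ha(\bar{1})$ which propagate usefully to the upper layer, or, as a cleaner alternative, proving a cross-$t$-intersecting analogue of Hilton's lemma to reduce directly to lex-initial pairs, for which the extremal configuration is transparent and the bound becomes a direct counting exercise.
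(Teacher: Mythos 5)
The paper states Theorem~\ref{thm-tintersecting} without proof, citing \cite{F78}; there is no in-paper argument to compare against, so your proposal must stand on its own.

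Your set-up is correct as far as it goes: shifting preserves cross $t$-intersection, and for shifted families the restrictions $\ha(1),\hb(1)$ are cross $(t-1)$-intersecting while $\ha(\bar 1),\hb(\bar 1)$ are cross $(t+1)$-intersecting by exactly the argument of Fact~\ref{fact-2}. But the double induction you then invoke genuinely does not close, and your own last paragraph concedes this. Concretely, the inductive hypothesis applied to each layer yields a dichotomy, and nothing aligns these dichotomies across the two layers. Writing $\binom{n}{k-t}=\binom{n-1}{k-t}+\binom{n-1}{k-t-1}$ and $\binom{n}{k-t-1}=\binom{n-1}{k-t-1}+\binom{n-1}{k-t-2}$, to bound $|\hb|$ you need \emph{both} $|\hb(1)|\le\binom{n-1}{k-t}$ and $|\hb(\bar 1)|\le\binom{n-1}{k-t-1}$, and to bound $|\ha|$ you need both $|\ha(1)|\le\binom{n-1}{k-t-1}$ and $|\ha(\bar 1)|\le\binom{n-1}{k-t-2}$. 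The hypothesis $|\ha|>\binom{n}{k-t-1}$ does not force both $|\ha(1)|>\binom{n-1}{k-t-1}$ and $|\ha(\bar 1)|>\binom{n-1}{k-t-2}$; one summand can absorb the excess while the other is small, so you cannot trigger the ``good'' branch of the inductive hypothesis on both layers simultaneously. The mixed configuration (layer one bounds $\hb(1)$, layer two bounds $\ha(\bar 1)$, but $\ha(1)$ and $\hb(\bar 1)$ remain unconstrained) is perfectly consistent with everything you have established and bounds neither $|\ha|$ nor $|\hb|$.

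Your two proposed escapes are both placeholders, not arguments. ``Tightening the inductive statement'' is exactly the hard part---you would need to identify and prove the stronger invariant, and it is not obvious what it should be. The ``cross $t$-intersecting analogue of Hilton's lemma'' is not a cleaner alternative: Hilton's lemma as stated and used in this paper is for $t=1$, and its extension to general $t$ is essentially as deep as (and for $t>1$ not obviously reducible to) the statement you are trying to prove; if you had it, you would no longer need the layered induction at all. As written, the proposal identifies the right preparatory moves but leaves the central combinatorial step unresolved.
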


We say that the cross-intersecting families $\hf,\hg$ are {\it saturated} or form {\it  a saturated pair} if adding an extra $k$-set to either of the families would destroy the cross-intersecting property.

For $\hf\subset \binom{[n]}{k}$ define the {\it shadow} of $\hf$,
\[
\partial\hf =\left\{G\colon |G|=k-1, \exists F\in \hf, G\subset F\right\}.
\]

\section{Shifting ad extremis}

In this section, we recall a modified shifting technique called shifting ad extremis, which was introduced in \cite{F2022}, \cite{FW2022-3}.

Recall the shifting operation as follows. Let $1\leq i< j\leq n$, $\hf\subset{[n]\choose k}$. Define
$$S_{ij}(\hf)=\left\{S_{ij}(F)\colon F\in\hf\right\},$$
where
$$S_{ij}(F)=\left\{
                \begin{array}{ll}
                  (F\setminus\{j\})\cup\{i\}, & j\in F, i\notin F \text{ and } (F\setminus\{j\})\cup\{i\}\notin \hf; \\[5pt]
                  F, & \hbox{otherwise.}
                \end{array}
              \right.
$$

 Let $\hf\subset\binom{[n]}{k}$, $\hg\subset\binom{[n]}{\ell}$ be families having certain properties (e.g., intersecting, cross $t$-intersecting) that are maintained by simultaneous shifting and certain properties (e.g., $\tau(\hf)\geq s$, $\varrho(\hg)\leq c$) that might be destroyed by shifting. Let $\hp$ be the collection of the latter properties that we want to maintain.

Define the quantity
\[
w(\hg) =\sum_{G\in \hg} \sum_{i\in G} i.
\]
Obviously $w(S_{ij}(\hg))\leq w(\hg)$ for $1\leq i<j\leq n$ with strict inequality unless $S_{ij}(\hg)=\hg$.

\begin{defn}\label{defn-2.1}
Suppose that $\hf\subset \binom{[n]}{k}$, $\hg\subset \binom{[n]}{\ell}$ are families having property $\hp$. We say that $\hf$ and $\hg$ have been {\it shifted ad extremis} with respect to $\hp$ if  $S_{ij}(\hf)=\hf$ and $S_{ij}(\hg)=\hg$ for every pair $1\leq i<j\leq n$ whenever $S_{ij}(\hf)$ and $S_{ij}(\hg)$ also have property $\hp$.
\end{defn}

Let us illustrate Definition \ref{defn-2.1} with the case $\hf=\hg\subset\binom{[n]}{k}$, intersecting and $\hp=\{\varrho(\hf)\leq \frac{1}{2}\}$. In this case $\hf$ is shifted ad extremis if $S_{ij}(\hf)=\hf$ for all $1\leq i<j\leq n$ unless $\varrho(S_{ij}(\hf))>\frac{1}{2}$. That is, we can define a graph $\mathds{H}=\mathds{H}_{\hp}(\hf)$ of shift-resistant pairs $(i,j)$ such that
\begin{align*}
&\varrho(S_{ij}(\hf))>\frac{1}{2} \mbox{ for } (i,j)\in  \mathds{H} \mbox{ and } S_{ij}(\hf)=\hf \mbox{ for }(i,j)\notin \mathds{H}.
\end{align*}
Note that $\hf$ is initial if and only if $\mathds{H}$ is empty.

We can obtain shifted ad extremis families by the following shifting ad extremis process. Let $\hf$,$\hg$ be cross-intersecting families with property $\hp$. Apply the shifting operation $S_{ij}$, $1\leq i<j\leq n$, to $\hf,\hg$ simultaneously and continue as long as the property $\hp$ is maintained. Recall that the shifting operation preserves the cross-intersecting property (cf. \cite{F87}). By abuse of notation, we keep denoting the current families by $\hf$ and $\hg$ during the shifting process. If $S_{ij}(\hf)$ or $S_{ij}(\hg)$ does not have property $\hp$, then we do not apply  $S_{ij}$ and choose a different pair $(i',j')$. However we keep returning to previously failed pairs $(i,j)$, because it might happen that at a later stage in the process $S_{ij}$ does not destroy property $\hp$ any longer. Note that the quantity $w(\hf)+w(\hg)$ is a positive integer and it decreases strictly in  each step. Eventually we shall arrive at families that are shifted ad extremis with respect to $\hp$.

Let $\hf\subset\binom{[n]}{k}$. If for $D\subset [n]$, $\hf(D)=\binom{[n]\setminus D}{k-|D|}$ then we say that $D$ is {\it full} in $\hf$ or $\hf(D)$ is {\it full}.

Let us prove a result concerning pairs of cross-intersecting families.

\begin{prop}\label{prop-2.10}
Let $n,k,\ell$ be positive integers, $n>k+\ell$. Suppose that $\hf\subset \binom{[n]}{k}$, $\hg \subset \binom{[n]}{\ell}$ are saturated non-trivial cross-intersecting. Suppose also that $\hf$ and $\hg$ are shifted ad extremis for $\hp=\{\tau(\hf)\geq 2,\tau(\hg)\geq 2\}$ but not (both) initial.
 Then one can find four distinct vertices $x_1,x_2,y_1,y_2$ such that either $\hf$ or $\hg$ (call it $\hj$) satisfies (i), (ii) and (iii).
 \begin{itemize}
   \item[(i)] $S_{x_iy_j}(\hj)$ is a star for $i=1,2;j=1,2$.
   \item[(ii)] $\hj(\overline{x_1},\overline{x_2})\neq\emptyset$, $\hj(\overline{y_1},\overline{y_2})\neq\emptyset$.
   \item[(iii)] At most one of $\{x_1,x_2\}$ and $\{y_1,y_2\}$ is full in $\hj$.
 \end{itemize}
\end{prop}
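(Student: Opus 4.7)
My plan is to first produce one shift-resistant pair yielding a $2$-transversal of one of the two families, then extract a second shift-resistant pair giving the required $K_{2,2}$ pattern of four distinct vertices, and finally verify the three listed conditions.

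Because $\hf,\hg$ are shifted ad extremis for $\hp=\{\tau(\hf)\geq 2,\tau(\hg)\geq 2\}$ but not both initial, there exists a pair $(a,b)$, $a<b$, for which the simultaneous shift $S_{ab}$ violates $\hp$. Hence $S_{ab}(\hj)$ is a star for some $\hj\in\{\hf,\hg\}$. Since $\tau(\hj)\geq 2$ originally, this star must be centred at $a$: if the centre were $m\neq a$, then the shifted sets $(F\setminus\{b\})\cup\{a\}$ would force $m\in F$, and combined with the unshifted sets this would make $\hj$ itself a star at $m$, contradicting $\tau(\hj)\geq 2$. It follows that $\{a,b\}$ is a $2$-transversal of $\hj$, and the shift is non-blocked, i.e.\ $(F\setminus\{b\})\cup\{a\}\notin\hj$ for every $F\in\hj$ with $b\in F,\ a\notin F$ --- these two conditions together being equivalent to $S_{ab}(\hj)$ being a star at $a$.

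Next I would analyse the set $\mathds{H}_{\hj}$ of pairs $\{c,d\}$ ($c<d$) that are non-blocked $2$-transversals of $\hj$ (equivalently, those for which $S_{cd}(\hj)$ is a star at $c$), and extract a $K_{2,2}$ from it: a second pair $\{c,d\}\in\mathds{H}_{\hj}$ with $\{c,d\}\cap\{a,b\}=\emptyset$ such that $\{a,d\}$ and $\{b,c\}$ also lie in $\mathds{H}_{\hj}$. This should follow from saturation of $(\hf,\hg)$ (which prevents the $2$-transversal structure of $\hj$ from being confined to $\{a,b\}$ and its close neighbourhood), cross-intersection with the partner family (also shifted ad extremis), and non-triviality $\tau\geq 2$. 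Labelling the two lower endpoints as $\{x_1,x_2\}$ and the two higher ones as $\{y_1,y_2\}$ gives the four distinct vertices required by the proposition.

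Condition (i) is then equivalent to ``every $F\in\hj$ contains $\{x_1,x_2\}$ or $\{y_1,y_2\}$'' together with the four non-blocked conditions, both automatic from the $K_{2,2}$ structure in $\mathds{H}_{\hj}$. Condition (ii) is immediate from $\tau(\hj)\geq 2$: neither $\{x_1,x_2\}$ nor $\{y_1,y_2\}$ alone covers $\hj$, so each is avoided by some set in $\hj$. Condition (iii) is obtained by contradiction: if both pairs were full in $\hj$, cross-intersection would force the partner family to meet both $\{x_1,x_2\}$ and $\{y_1,y_2\}$ on every set, and saturation would over-determine the partner's structure, conflicting with ad extremis shifting or with non-triviality of the partner. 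The hard part will be the extraction of the second shift-resistant pair with the correct $K_{2,2}$ compatibility --- this requires a delicate case analysis of which $2$-transversals of $\hj$ are full and how they interact with the partner family via cross-intersection.
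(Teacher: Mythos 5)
Your opening observations are correct and match the paper's starting point: since $\hf,\hg$ are shifted ad extremis but not both initial, some pair $(a,b)$, $a<b$, has $S_{ab}(\hj)$ a star for one of the two families $\hj\in\{\hf,\hg\}$; this star must be centered at $a$ (any other centre would make $\hj$ itself a star, contradicting $\tau(\hj)\geq 2$); and this is equivalent to $\hj(\bar a,\bar b)=\emptyset$ together with $\hj(a)\cap\hj(b)=\emptyset$. After that, however, the proposal has one missing argument and two mistaken claims.

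The missing argument is, as you yourself admit, ``the hard part'': extracting the second shift-resistant pair $\{x_2,y_2\}$ disjoint from $\{x_1,y_1\}$ together with the two cross pairs $\{x_1,y_2\}$, $\{x_2,y_1\}$, all four of which must produce $\hj$-stars. You gesture at saturation, cross-intersection, and non-triviality but supply no mechanism. The paper's device is concrete and is doing all the work: among $K,L\in\hg$ with $K\cap\{x_1,y_1\}=\{x_1\}$ and $L\cap\{x_1,y_1\}=\{y_1\}$, choose a pair with $|K\cap L|$ \emph{maximal}; set $x_2\in K\setminus L$, $y_2\in L\setminus K$. Maximality of $|K\cap L|$ forces each of $\{x_2,y_2\}$, $\{x_1,y_2\}$, $\{x_2,y_1\}$ to be shift-resistant (otherwise the shift produces a $K'$ or $L'$ with larger intersection), and cross-intersection with the full $\hf(x_1,y_1)$ rules out these pairs being of the $\hf$-star type. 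The two mistaken claims: first, you say (ii) is ``immediate from $\tau(\hj)\geq 2$: neither $\{x_1,x_2\}$ nor $\{y_1,y_2\}$ alone covers $\hj$.'' This is false — $\tau(\hj)\geq 2$ only forbids one-element covers, and two-element covers do exist (indeed $\{x_1,y_1\}$ is one), so it gives no information on whether $\{x_1,x_2\}$ covers $\hj$. The paper proves (ii) by a different route: if $\hg(\overline{x_1},\overline{x_2})=\emptyset$ then by saturation $\hf(x_1,x_2)$ is full, but $L\in\hg$ avoids $\{x_1,x_2\}$, and with $n>k+\ell$ one finds $F\in\hf$ disjoint from $L$, contradicting cross-intersection — an argument that crucially uses the explicit set $L$ that your sketch never introduces. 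Second, your sketch for (iii) appeals to constraints on the \emph{partner} family, but the paper's argument is internal to $\hj$: if both $\{x_1,x_2\}$ and $\{y_1,y_2\}$ were full in $\hg$, then every $3$-subset of $Z=\{x_1,x_2,y_1,y_2\}$ is full, and taking any $R\in\binom{[n]\setminus Z}{\ell-3}$ gives $R\cup\{x_2,y_2\}\in\hg(x_1)\cap\hg(y_1)$, contradicting $\hg(x_1)\cap\hg(y_1)=\emptyset$. Your ``over-determine the partner's structure'' line does not close this case.
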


 Before the proof let us note that once the statement is proved we can assume without loss of generality $x_1<x_2$, $y_1<y_2$, $x_1<y_1$. Note also that by saturatedness, (ii) and (iii) imply that at most one of $\{x_1,x_2\}$ and $\{y_1,y_2\}$ is shift-resistant.

\begin{proof}
 Since $\hf$, $\hg$ are shifted ad extremis for $\hp=\{\tau(\hf)\geq 2,\tau(\hg)\geq 2\}$, we see that for every $1\leq i<j\leq n$ either (a) or (b) or (c) occurs.

\begin{itemize}
  \item[(a)] $S_{ij}(\hf)=\hf$ and $S_{ij}(\hg)=\hg$;
  \item[(b)] $S_{ij}(\hf)$ is a star;
  \item[(c)] $S_{ij}(\hg)$ is a star.
\end{itemize}

Recall that $\hf$, $\hg$ are not both initial. Since the statement is symmetric in $\hf$ and $\hg$ assume for the proof that there exist $1\leq x_1<y_1\leq n$ such that $(x_1,y_1)$ is of type (c). Then $\hg(\overline{x_1},\overline{y_1})=\emptyset$, $\hg(x_1)\cap \hg(y_1)=\emptyset$ and by saturatedness $\hf(x_1,y_1)$ is full. Choose $K,L\in \hg$ with $K\cap (x_1,y_1)=\{x_1\}$, $L\cap (x_1,y_1)=\{y_1\}$ and  $|K\cap L|$ maximal. By $\hg(x_1)\cap \hg(y_1)=\emptyset$ we infer $|K\cap L|\leq \ell-2$. Choose $x_2\in K\setminus L$, $x_2\neq x_1$; $y_2\in L\setminus K$, $y_2\neq y_1$. Then $(x_2,y_2)$ is not of type (a). Indeed, if $x_2<y_2$ and $(x_2,y_2)$ is of type (a), then $S_{x_2y_2}(\hg)=\hg$ implies $(L\setminus\{y_2\})\cup\{x_2\}=:L'\in \hg$. But $|K\cap L'|=|K\cap L|+1$, a contradiction. Similarly, $y_2<x_2$ and $S_{y_2x_2}(\hg)=\hg$ would imply $(K\setminus\{x_2\})\cup \{y_2\}=:K'\in \hg$. Again, $|K'\cap L|=|K\cap L|+1$ provides the contradiction.

If $(x_2,y_2)$ is of type (b), i.e., $\hf(\overline{x_2},\overline{y_2})=\emptyset$, then $\hg(x_2,y_2)$ would be full. But then $\hf(x_1,y_1)$, $\hg(x_2,y_2)$ are not cross-intersecting, a contradiction. Consequently, $(x_2,y_2)$ is of type (c), that is, $\hf(x_2,y_2)$ is full, $\hg(\overline{x_2},\overline{y_2})=\emptyset$.

Let us prove that $\{x_1,y_2\}$ and $\{x_2,y_1\}$ are shift-resistant. By symmetry we consider the case $x_1<y_2$. (Recall $\{x_1,x_2\}\subset K$, $\{y_1,y_2\}\subset L$, $K,L\in \hg$). If $S_{x_1y_2}(\hg)=\hg$ then $(L\setminus \{y_2\})\cup \{x_1\}=:L''\in \hg$ but $L''\cap\{x_2,y_2\}=\emptyset$ in contradiction with $\hg(\overline{x_2},\overline{y_2})=\emptyset$.
Thus $(x_1,y_2)$ is not of type (a). Similarly $\{x_2,y_1\}$ is not of type (a).

We showed that both $\{x_1,y_2\}$ and $\{x_2,y_1\}$ are of type (b) or (c). By cross-intersection their types could not be different. Set $Z=\{x_1,x_2,y_1,y_2\}$. If both of them are of type (b) then $\hg(x_1,y_2)$ and $\hg(x_2,y_1)$ are full. Consequently for all $T\in \binom{Z}{3}$, $T$ is full in $\hg$ too. Let $R\in \binom{[n]\setminus Z}{k-3}$ be arbitrary. Then $R\cup Z\setminus \{x_1\}$ and $R\cup Z\setminus \{y_1\}$ are in $\hg$ showing $R\cup\{x_2,y_2\}\in \hg(x_1)\cap \hg(y_1)$, a contradiction. Thus $\{x_1,y_1\}$, $\{x_1,y_2\}$, $\{x_2,y_1\}$ and $\{x_2,y_2\}$ are all of type (c) and (i) holds.

Let us show that $\hg(\overline{x_1},\overline{x_2})\neq\emptyset$. Indeed, if $\hg(\overline{x_1},\overline{x_2})=\emptyset$ then by saturatedness $\hf(x_1,x_2)$ is full. However that would contradict $F\cap L\neq \emptyset$ for all $F\in \hf$. Similarly $\hg(\overline{y_1},\overline{y_2})\neq\emptyset$ and (ii) holds.

%Note that this shows that we can not have a triangle inside $Z$. If $(u,v,w)$ is a triangle in $\hh$, there is an edge $(a,b)$ of type (c) disjoint to $(u,v,w)$ with $\{a,b\}\subset Z$.

%Note that  $\hg(\overline{x_1},\overline{x_2})\neq\emptyset$ and $\hg(\overline{y_1},\overline{y_2})\neq\emptyset$ imply that  neither   $\{x_1,x_2\}$ nor $\{y_1,y_2\}$ is of type (c).
If  both $\{x_1,x_2\}$ and $\{y_1,y_2\}$  are full in $\hg$, then all 3-subsets of $Z$ are full. Let $R\in \binom{[n]\setminus Z}{k-3}$. Then $R\cup Z\setminus \{x_1\}$ and $R\cup Z\setminus \{y_1\}$ are in $\hg$ showing $R\cup\{x_2,y_2\}\in \hg(x_1)\cap \hg(y_1)$, a contradiction. Thus (iii) holds and the proposition is proven.
\end{proof}

\begin{cor}
Suppose that $\hf\subset\binom{[n]}{k}$, $\hg\subset\binom{[n]}{\ell}$ and $\hf,\hg$ are  non-trivial cross-intersecting with
\begin{align}\label{ineq-2.5}
|\hf|>\binom{n-2}{k-2}+\binom{n-4}{k-2}-\binom{n-\ell-4}{k-2},\ |\hg|>\binom{n-2}{\ell-2}+\binom{n-4}{\ell-2}-\binom{n-k-4}{\ell-2}.
\end{align}
Then there are non-trivial, initial cross-intersecting families with the same size.
\end{cor}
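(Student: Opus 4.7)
The plan is by contradiction. Suppose $(\hf,\hg)$ satisfies the hypotheses of the corollary yet no non-trivial initial cross-intersecting pair has sizes $(|\hf|,|\hg|)$. I would first enlarge $(\hf,\hg)$ to a saturated non-trivial cross-intersecting pair $(\hf^*,\hg^*)$ by greedily adjoining compatible $k$-sets and $\ell$-sets while preserving non-triviality; then $|\hf^*|\geq|\hf|$ and $|\hg^*|\geq|\hg|$, so the strict size bounds in \eqref{ineq-2.5} still hold. Next I would apply the shifting ad extremis process to $(\hf^*,\hg^*)$ with respect to $\hp=\{\tau(\hf^*)\geq 2,\tau(\hg^*)\geq 2\}$; sizes, cross-intersection and saturation are all preserved, and by our contradiction assumption the resulting pair cannot be both initial. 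Proposition~\ref{prop-2.10} then supplies four distinct vertices $x_1<x_2$, $y_1<y_2$ and a family $\hj\in\{\hf^*,\hg^*\}$ satisfying (i)--(iii); I treat the case $\hj=\hg^*$, the other being symmetric.

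The core counting step bounds $|\hg^*|$ via the trace on $Z=\{x_1,x_2,y_1,y_2\}$. By (i) every $G\in\hg^*$ contains $\{x_1,x_2\}$ or $\{y_1,y_2\}$, so only seven trace types are admissible. The two ``pure'' classes (trace $\{x_1,x_2\}$ and trace $\{y_1,y_2\}$) each contribute at most $\binom{n-4}{\ell-2}$, the full-trace class contributes at most $\binom{n-4}{\ell-4}$, and the four size-$3$ crossover classes can be bounded \emph{jointly} by $2\binom{n-4}{\ell-3}$. The last bound exploits the four identities $\hg^*(x_i)\cap\hg^*(y_j)=\emptyset$ implied by (i): on each $(\ell-3)$-tail $S\subset[n]\setminus Z$ the candidate crossover sets of ``$A$-type'' $\{x_1,x_2,y_i\}\cup S$ cannot coexist with those of ``$B$-type'' $\{x_j,y_1,y_2\}\cup S$, so at most two of the four candidates on each $S$ lie in $\hg^*$. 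Summing yields $|\hg^*|\leq \binom{n-2}{\ell-2}+\binom{n-4}{\ell-2}$. The extra saving of $\binom{n-k-4}{\ell-2}$ comes from cross-intersection against a single $k$-set $F_0\in\hf^*$ disjoint from $Z$: every set of trace $\{y_1,y_2\}$ disjoint from $F_0$ is forbidden, and there are exactly $\binom{n-k-4}{\ell-2}$ of them. Combining, $|\hg^*|\leq \binom{n-2}{\ell-2}+\binom{n-4}{\ell-2}-\binom{n-k-4}{\ell-2}$, contradicting \eqref{ineq-2.5}.

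The main obstacle is producing the $k$-set $F_0\in\hf^*$ disjoint from $Z$, equivalently verifying that $Z$ is not a transversal of $\hf^*$. If $Z$ were a transversal, saturation would force every $\ell$-set containing $Z$ into $\hg^*$; a short separate subcase argument, invoking (ii) for $\hg^*$ together with cross-intersection against the smaller family $\{F\colon F\cap Z\neq\emptyset\}\supseteq\hf^*$, would then bound $|\hg^*(\overline{x_1},\overline{x_2})|$ strongly enough to again contradict \eqref{ineq-2.5}. A secondary technical point is that the saturation and shifting-ad-extremis steps commute well enough: any shift already blocked before saturation remains blocked afterwards because the obstructing star property persists under enlargement. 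These are the two places where the hidden hypothesis $n>k+\ell$ must enter the bookkeeping.
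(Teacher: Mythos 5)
Your overall architecture — shift ad extremis with respect to $\{\tau\geq 2\}$, pass to a saturated pair, invoke Proposition~\ref{prop-2.10} to get $Z=\{x_1,x_2,y_1,y_2\}$, and then count $\hg^*$ by its trace on $Z$ — is the same as the paper's, and most of the counting matches: the size-$3$ traces give $2\binom{n-4}{\ell-3}$ by exactly the argument in Claim~\ref{claim-2.14}, the full trace gives $\binom{n-4}{\ell-4}$, and the two size-$2$ classes together give $2\binom{n-4}{\ell-2}$.

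The flaw is in how you obtain the correction term $-\binom{n-k-4}{\ell-2}$. You insist on an $F_0\in\hf^*$ with $F_0\cap Z=\emptyset$, and you correctly flag that its existence (``$Z$ is not a transversal of $\hf^*$'') is not automatic. Your fallback for the case that $Z$ \emph{is} a transversal does not work as stated: from $\hf^*\subset\{F\colon F\cap Z\neq\emptyset\}$ you cannot extract any useful restriction on $\hg^*$, because cross-intersecting a \emph{sub}family of $\{F\colon F\cap Z\neq\emptyset\}$ is a weaker, not stronger, condition, and saturation forcing $\hg^*(Z)$ to be full only recovers the $\binom{n-4}{\ell-4}$ term you already counted. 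So as written there is a genuine gap.

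What you actually need is much weaker and is already supplied by Proposition~\ref{prop-2.10}(iii) together with saturation: at least one of $\{x_1,x_2\},\{y_1,y_2\}$ is not full in $\hg^*$, and by saturatedness this gives an $F_0\in\hf^*$ disjoint from \emph{one} of those pairs (say $F_0\cap\{y_1,y_2\}=\emptyset$), though $F_0$ may well meet the other pair. That is enough: for any $G\in\hg^*$ with $G\cap Z=\{y_1,y_2\}$, cross-intersection forces $(G\setminus Z)\cap(F_0\setminus Z)\neq\emptyset$, and since $|F_0\setminus Z|\leq k$ the number of admissible tails is at most $\binom{n-4}{\ell-2}-\binom{n-4-k}{\ell-2}$. (Replace your ``exactly $\binom{n-k-4}{\ell-2}$'' by ``at least''.) This removes the need for any transversal subcase. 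The paper packages the same saving slightly differently, applying it to the subclass $\hq_{xy}$ of tails $Q$ with both $Q\cup\{x_1,x_2\}$ and $Q\cup\{y_1,y_2\}$ in $\hg$, but the numerical outcome is identical; your trace-class version is fine once the $F_0$ requirement is relaxed as above.
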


\begin{proof}
Shift $\hf,\hg$ ad extremis with respect to $\{\tau(\hf)\geq 2, \tau(\hg)\geq 2\}$.  Assume indirectly that $\hf,\hg$ are not both initial. We may further assume that $\hf,\hg$ form a saturated pair. Then by Proposition \ref{prop-2.10} there exist $x_1,x_2,y_1,y_2$ such that either $\hf$ or $\hg$ satisfies  (i), (ii) and (iii). Since the statement is symmetric in $\hf$ and $\hg$, assume that $\hg$ satisfies  (i), (ii) and (iii) of Proposition \ref{prop-2.10}.

Let $Z=\{x_1,x_2,y_1,y_2\}$. By (i) $(x_1,y_1),(x_2,y_1)$, $(x_1,y_2),(x_2,y_2)$ are all full in $\hf$. By cross-intersection, we see that either $\{x_1,x_2\}\subset G$ or $\{y_1,y_2\}\subset G$ for every $G\in \hg$. Let
 \[
 \hg_r =\{G\in \hg\colon |G\cap Z|=r\},\ r=2,3,4.
 \]
 For any $Q\in \binom{[n]\setminus Z}{\ell-2}$ and $P\in \binom{Z}{2}$, if $P\cup Q\in \hg$ then $P$ has to be $(x_1,x_2)$ or $(y_1,y_2)$. Define
\begin{align*}
&\hq_x=\left\{Q\in \binom{[n]\setminus Z}{\ell-2}\colon Q\cup \{x_1,x_2\}\in \hg,\ Q\cup \{y_1,y_2\}\notin \hg\right\},\\[5pt]
&\hq_y=\left\{Q\in \binom{[n]\setminus Z}{\ell-2}\colon Q\cup \{x_1,x_2\}\notin \hg,\ Q\cup \{y_1,y_2\}\in \hg\right\},\\[5pt]
&\hq_{xy}=\left\{Q\in \binom{[n]\setminus Z}{\ell-2}\colon Q\cup \{x_1,x_2\}\in \hg,\ Q\cup \{y_1,y_2\}\in \hg\right\}.
\end{align*}
By (iii) we infer that at least one of $\{x_1,x_2\}$ and $\{y_1,y_2\}$ is not full in $\hg$. It follows that $\hf(\overline{x_1},\overline{x_2})\cup \hf(\overline{y_1},\overline{y_2})$ is non-empty. Let $F_0\in \hf(\overline{x_1},\overline{x_2})\cup \hf(\overline{y_1},\overline{y_2})$. Since $Q\cap F_0\neq \emptyset$ for every $Q\in \hq_{xy}$, we infer that
\[
|\hq_{xy}|\leq \binom{n-4}{\ell-2} - \binom{n-4-|F_0\setminus Z|}{\ell-2}\leq \binom{n-4}{\ell-2} - \binom{n-4-k}{\ell-2}.
\]
Note that $|\hq_x|+|\hq_y|+|\hq_{xy}| \leq \binom{n-4}{\ell-2}$. It follows that
\begin{align}\label{ineq-2.3}
|\hg_2|=2|\hq_{xy}|+|\hq_x|+|\hq_y| \leq 2\binom{n-4}{\ell-2} - \binom{n-4-k}{\ell-2}.
\end{align}

\begin{claim}\label{claim-2.14}
For $R\in \binom{[n]\setminus Z}{\ell-3}$ there are  at most two choices for $S\in \binom{Z}{3}$ with $R\cup S\in \hg$.
\end{claim}

\begin{proof}
If there are three choices for $S\in \binom{Z}{3}$ with $R\cup S\in \hg$, then  we may choose $S_1,S_2$ such that $S_1\cap S_2= \{x_1,y_1\}$ or $\{x_2,y_2\}$. Without loss of generality assume that $S_1=\{x_1,y_1,x_2\}$, $S_2=\{x_1,y_1,y_2\}$. Then $R\cup\{x_1,y_1\}\in \hg(x_2)\cap \hg(y_2)$, contradiction.
\end{proof}

By Claim \ref{claim-2.14} we see that
\begin{align}\label{ineq-2.4}
|\hg_3|\leq 2\binom{n-4}{\ell-3}.
\end{align}
Combining \eqref{ineq-2.3} and \eqref{ineq-2.4}, we obtain that
\begin{align*}
|\hg|=|\hg_2|+|\hg_3|+|\hg_4|&\leq 2\binom{n-4}{\ell-2} - \binom{n-4-k}{\ell-2}+2\binom{n-4}{\ell-3}+\binom{n-4}{\ell-4}\\[5pt]
&= \binom{n-2}{\ell-2}+\binom{n-4}{\ell-2}-\binom{n-k-4}{\ell-2},
\end{align*}
contradicting \eqref{ineq-2.5}.
\end{proof}

\section{Conditions guaranteeing $\varrho(\hf)> \frac{1}{2}$}

In this section, we prove Theorem \ref{thm-main1}, which leads to an improved bound for the maximum diversity result in \cite{F2020}.

Let us prove the following lemma.

\begin{lem}
Let $\hf,\hg\subset \binom{[n]}{k}$ be cross-intersecting. Suppose that for some $\{x,y\}\in \binom{[n]}{2}$,
\begin{align}
&|\hf(x,y)|\geq \binom{n-3}{k-3}+\binom{n-4}{k-3}+\binom{n-6}{k-4}. \mbox{ Then} \label{ineq-hfxy}\\[5pt]
&|\hg(\bar{x},\bar{y})|\leq \binom{n-5}{k-3}+\binom{n-6}{k-3}.\label{ineq-hgxy}
\end{align}
\end{lem}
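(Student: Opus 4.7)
The plan is to convert the statement into a cross-intersection problem on a reduced ground set and then exploit Hilton's Lemma together with the lex-structure of initial families. If $F\in\hf$ contains $\{x,y\}$ and $G\in\hg$ is disjoint from $\{x,y\}$, then $F\cap G\neq\emptyset$ forces $(F\setminus\{x,y\})\cap G\neq\emptyset$. Thus $\hf(x,y)$ (viewed as $(k-2)$-sets on $[n]\setminus\{x,y\}$) and $\hg(\bar x,\bar y)$ (viewed as $k$-sets on $[n]\setminus\{x,y\}$) are cross-intersecting. Relabel $[n]\setminus\{x,y\}$ as $[m]$ with $m=n-2$, set $a=k-2$, $b=k$, and denote the resulting families $\ha\subset\binom{[m]}{a}$ and $\hb\subset\binom{[m]}{b}$. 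By Hilton's Lemma I may replace them by $\hl(m,a,|\ha|)$ and $\hl(m,b,|\hb|)$, hence assume both are lex-initial.

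Next I would identify the structure forced on $\ha$ by the lower bound on $|\hf(x,y)|$. The lex-first $\binom{m-1}{a-1}$ sets in $\binom{[m]}{a}$ are those containing $1$; the next $\binom{m-2}{a-1}$ are those with minimum element $2$; and the subsequent $\binom{m-4}{a-2}$ are those of the form $\{3,4\}\cup S$ with $S\in\binom{[5,m]}{a-2}$. Rewriting the hypothesis as $|\ha|\geq \binom{m-1}{a-1}+\binom{m-2}{a-1}+\binom{m-4}{a-2}$, lex-initiality forces $\ha\supset\ha_0$, where
\[
\ha_0=\left\{A\in\tbinom{[m]}{a}\colon A\cap[2]\neq\emptyset\right\}\cup\left\{A\in\tbinom{[m]}{a}\colon \{3,4\}\subset A\right\}.
\]

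The heart of the proof is a three-step disjointness count showing $\{1,2\}\subset B$ and $B\cap\{3,4\}\neq\emptyset$ for every $B\in\hb$. If $1\notin B$, then $B$ must meet every $a$-set $\{1\}\cup S$ with $S\in\binom{[2,m]}{a-1}$, so $B$ meets every $(a-1)$-subset of $[2,m]$; hence $|B\cap[2,m]|\geq m-a+1$, which with $1\notin B$ and $|B|=b$ forces $b\geq m-a+1$, i.e., $n\leq 2k-1$, contradicting $n\geq 2k$. Running the identical count on $[3,m]$ with $A=\{2\}\cup S$, $S\in\binom{[3,m]}{a-1}$, yields $2\in B$; and on $[5,m]$ with $A=\{3,4\}\cup S$, $S\in\binom{[5,m]}{a-2}$, yields $B\cap\{3,4\}\neq\emptyset$. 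Consequently $\hb\subset\hb^*:=\left\{B\in\binom{[m]}{b}\colon \{1,2\}\subset B,\ B\cap\{3,4\}\neq\emptyset\right\}$, so $|\hb|\leq|\hb^*|=\binom{m-2}{b-2}-\binom{m-4}{b-2}$.

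Two applications of Pascal's identity rewrite this as $\binom{m-3}{b-3}+\binom{m-4}{b-3}=\binom{n-5}{k-3}+\binom{n-6}{k-3}$, matching the desired bound (and $\hb^*$ itself witnesses sharpness). I anticipate no serious obstacle; the only minor subtlety is recognising that the initial segment of size $\binom{m-4}{a-2}$ among $a$-sets with $\min\geq 3$ consists exactly of those containing $\{3,4\}$, after which the three case arguments and the Pascal identity are routine.
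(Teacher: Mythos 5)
Your proof is correct and takes essentially the same approach as the paper: reduce to the ground set $[n]\setminus\{x,y\}$, apply Hilton's Lemma to pass to lex-initial families, read off the lex structure forced on the smaller family by the lower bound, and then deduce by cross-intersection that the larger family lies inside $\{B:\{1,2\}\subset B,\ B\cap\{3,4\}\neq\emptyset\}$. The only difference is presentational: the paper asserts the containment for $\hl(n-2,k,|\hg(\bar{x},\bar{y})|)$ directly, while you spell out the disjointness counts (which do require feeding the already-established facts $1\in B$, resp.\ $\{1,2\}\subset B$, into the second and third steps to close the case $n=2k$), and the implicit hypothesis $n\geq 2k$ that you invoke is indeed needed and is satisfied wherever the paper applies the lemma.
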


\begin{proof}
Without loss of generality let $(x,y)=(n-1,n)$. Then $\hf(x,y)\subset\binom{[n-2]}{k-2}$, $\hg(\bar{x},\bar{y})\subset\binom{[n-2]}{k}$ and $\hf(x,y),\hg(\bar{x},\bar{y})$ are cross-intersecting. By Hilton's lemma  the same holds for the lexicographic initial families. By \eqref{ineq-hfxy} $\hl(n-2,k-2,|\hf(x,y)|)$ contains
\[
\left\{\bar{F}\in \binom{[n-2]}{k-2}\colon \bar{F}\cap\{1,2\}\neq \emptyset\right\}\bigcup \left\{\bar{F}\in \binom{[n-2]}{k-2}\colon \{3,4\}\subset \bar{F}\right\}.
\]
By cross-intersection,
\[
\hl(n-2,k,|\hg(\bar{x},\bar{y})|)\subset \left\{G\in \binom{[n-2]}{k}\colon \{1,2\}\subset G \mbox{ and } G\cap \{3,4\}\neq \emptyset\right\}.
\]
These are $\binom{n-5}{k-3}+\binom{n-6}{k-3}$ sets. Hence $|\hg(\bar{x},\bar{y})|\leq \binom{n-5}{k-3}+\binom{n-6}{k-3}$.
\end{proof}

\begin{cor}\label{cor-1}
If $\hf=\hg$ then \eqref{ineq-hfxy} implies $\varrho(\hf)>\frac{1}{2}$.
\end{cor}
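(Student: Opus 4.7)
The plan is to derive $\varrho(\hf) > \frac{1}{2}$ directly from the preceding lemma, by showing that either $x$ or $y$ must be a vertex of degree exceeding $|\hf|/2$.

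The first step is to apply the lemma with $\hg = \hf$. Since $\hf \subset \binom{[n]}{k}$ being intersecting is equivalent to $\hf$ being cross-intersecting with itself, the hypothesis of the lemma is met, and combined with \eqref{ineq-hfxy} we obtain
\[
|\hf(\bar{x},\bar{y})| \le \binom{n-5}{k-3}+\binom{n-6}{k-3}.
\]

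The second step is to set up the elementary double-counting identity
\[
|\hf(x)|+|\hf(y)| = |\hf|+|\hf(x,y)|-|\hf(\bar{x},\bar{y})|,
\]
which follows by partitioning $\hf$ into the four classes according to whether each set contains $x$, $y$, both, or neither, and observing that sets containing both $x$ and $y$ are counted twice on the left-hand side while sets avoiding both contribute nothing.

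The third step is to verify $|\hf(x,y)| > |\hf(\bar{x},\bar{y})|$ by comparing the lower bound assumed in \eqref{ineq-hfxy} with the upper bound from Step~1 term by term: $\binom{n-3}{k-3} > \binom{n-5}{k-3}$ and $\binom{n-4}{k-3} > \binom{n-6}{k-3}$ (both strict in the range $n \ge 2k$, $k \ge 4$ relevant to the paper), while $\binom{n-6}{k-4} \ge 0$. Plugging this into the identity gives $|\hf(x)|+|\hf(y)| > |\hf|$, so at least one of $|\hf(x)|, |\hf(y)|$ strictly exceeds $|\hf|/2$, which is exactly $\varrho(\hf) > \tfrac{1}{2}$.

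No real obstacle is expected here: the corollary is essentially a one-line consequence of the lemma, and the only thing to verify is the sign of a difference of binomial coefficients, which is immediate in the regime of interest.
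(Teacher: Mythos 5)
Your proof is correct and is essentially the same argument as the paper's: both hinge on the strict inequality $|\hf(x,y)| > |\hf(\bar{x},\bar{y})|$ obtained by comparing the bounds in \eqref{ineq-hfxy} and \eqref{ineq-hgxy}, and both then conclude that one of $x,y$ has degree exceeding $|\hf|/2$. The only cosmetic difference is that you package the final step via the identity $|\hf(x)|+|\hf(y)|=|\hf|+|\hf(x,y)|-|\hf(\bar{x},\bar{y})|$, whereas the paper adds $|\hf(x,\bar y)|$ (assumed WLOG $\le |\hf(\bar x,y)|$) to both sides of $|\hf(\bar{x},\bar{y})|<|\hf(x,y)|$ to get $|\hf(\bar y)|<|\hf(y)|$ directly; these are two renderings of the same counting.
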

\begin{proof}
Note that by \eqref{ineq-hfxy} and \eqref{ineq-hgxy} we have $|\hf(\bar{x},\bar{y})|<|\hf(x,y)|$. By symmetry assume $|\hf(x,\bar{y})|\leq |\hf(\bar{x},y)|$. Then
\[
|\hf(\bar{x},\bar{y})|+|\hf(x,\bar{y})|<|\hf(x,y)|+|\hf(\bar{x},y)|.
\]
That is, $|\hf(\bar{y})|<|\hf(y)|$ and  $\varrho(\hf)>\frac{1}{2}$ follows.
\end{proof}

The next statement is well-known. Let us include the simple proof.

\begin{prop}
Let $\hf\subset \binom{[n]}{k}$ be an initial family. Then
\begin{align}\label{ineq-shifted}
\partial \hf(\bar{1})\subset \hf(1).
\end{align}
\end{prop}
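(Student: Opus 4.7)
The plan is to take an arbitrary $G\in \partial\hf(\bar 1)$ and show directly that $G\cup\{1\}\in \hf$, for this is exactly the statement $G\in \hf(1)$. By definition of the shadow there exists $F\in \hf(\bar 1)$ with $G\subset F$; in particular $1\notin F$ and $F=G\cup\{j\}$ for some $j\in [2,n]\setminus G$. It then suffices to verify that $G\cup\{1\}\prec F$ in the shifting partial order, because initiality of $\hf$ together with $F\in \hf$ will immediately give $G\cup\{1\}\in \hf$.

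To verify the inequality, list $G=\{g_1<g_2<\cdots <g_{k-1}\}\subset [2,n]$, so that the sorted form of $A:=G\cup\{1\}$ is $(a_1,\ldots,a_k)=(1,g_1,g_2,\ldots,g_{k-1})$. Denote the sorted form of $B:=F$ by $(b_1,\ldots,b_k)$. Since $1\notin F$, one has $b_1\geq 2> 1=a_1$. For $i\geq 2$ I would argue $b_i\geq g_{i-1}=a_i$ by a counting argument: the number of elements of $G$ that are strictly less than $g_{i-1}$ is exactly $i-2$, and inserting the single extra element $j$ contributes at most one additional element below $g_{i-1}$. Hence fewer than $i$ elements of $F$ lie strictly below $g_{i-1}$, which forces the $i$-th smallest element $b_i$ of $F$ to satisfy $b_i\geq g_{i-1}$. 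Combining these inequalities yields $A\prec B$, as required.

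There is essentially no serious obstacle here; the only care needed is the bookkeeping of how the insertion of $j$ shifts the coordinates. Once $G\cup\{1\}\prec F$ is established, initiality closes the argument in one line.
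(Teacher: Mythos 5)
Your proof is correct and takes essentially the same approach as the paper: take $G\in\partial\hf(\bar 1)$, write $F=G\cup\{j\}\in\hf(\bar 1)$, observe that $G\cup\{1\}=(F\setminus\{j\})\cup\{1\}\prec F$, and invoke initiality. The paper states the $\prec$ comparison as immediate, while you verify it coordinate by coordinate, but the argument is the same.
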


\begin{proof}
Suppose that $E\subset F\in \hf(\bar{1})$ and $E=F\setminus \{j\}$. Then by initiality $E\cup\{1\}\in \hf$, i.e., $E\in \hf(1)$.
\end{proof}

 For $\hf\subset \binom{[n]}{k}$ and $P\in \binom{[n]}{2}$, define
\[
\hf_P=\{F\in \hf\colon F\cap P\neq\emptyset\}.
\]
For the proofs  of Theorems \ref{thm-main1} and \ref{thm-main2}, we need the following lemma.

\begin{lem}\label{lem-3.5}
Let $\hf\subset \binom{[n]}{k}$ be an intersecting family with $|\hf(P)|\leq M$ for every $P\in \binom{[n]}{2}$. Suppose that $R,Q\in \binom{[n]}{2}$ and $R\cap Q=\emptyset$. Then
\begin{align}\label{ineq-FQR-general}
|\hf_R\cap \hf_Q|\leq \max\left\{3M+\binom{n-7}{k-5}+\binom{n-8}{k-5}, 2M+2\binom{n-5}{k-3}\right\}.
\end{align}
Moreover, if $M\geq 2\binom{n-5}{k-3}$ then
\begin{align}\label{ineq-FQR-general2}
|\hf_R\cap \hf_Q|\leq 3M+\binom{n-7}{k-5}+\binom{n-8}{k-5}.
\end{align}
\end{lem}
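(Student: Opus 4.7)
The plan is to decompose $\hf_R\cap\hf_Q$ according to the trace of each $F$ on $R\cup Q=\{r_1,r_2,q_1,q_2\}$. Writing $P_{ij}=\{r_i,q_j\}$ for the four crossing pairs and
\[
X_{ij}=\{F\in\hf\colon F\cap (R\cup Q)=P_{ij}\},\qquad i,j\in\{1,2\},
\]
every $F\in\hf_R\cap\hf_Q$ must contain at least one $P_{ij}$. From this I would extract two complementary bounds. A short case analysis shows that any $F\in\hf_R\cap\hf_Q$ avoiding all three of $P_{11},P_{12},P_{21}$ must satisfy $r_1\notin F$ (otherwise $F$ would meet $Q$ through one of the forbidden pairs), and then $r_2\in F$, $q_2\in F$, $q_1\notin F$, i.e.\ $F\in X_{22}$. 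This yields the \emph{three-pair bound}
\[
|\hf_R\cap\hf_Q|\leq |\hf(P_{11})|+|\hf(P_{12})|+|\hf(P_{21})|+|X_{22}|\leq 3M+|X_{22}|,
\]
and symmetrically $\leq 3M+|X_{11}|$. On the other hand, sets avoiding the two off-diagonal pairs $P_{12}$ and $P_{21}$ must have trace $P_{11}$ or $P_{22}$, giving the \emph{two-pair bound}
\[
|\hf_R\cap\hf_Q|\leq 2M+|X_{11}|+|X_{22}|.
\]

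Next I would bound $|X_{11}|+|X_{22}|$ using the intersecting property. Because any $F\in X_{11}$ is disjoint from $\{r_2,q_2\}$ and any $G\in X_{22}$ is disjoint from $\{r_1,q_1\}$, the intersection $F\cap G$ lies in $[n]\setminus (R\cup Q)$, which has size $n-4$. After truncating the fixed pairs, $\widetilde{X}_{11}=\{F\setminus P_{11}\colon F\in X_{11}\}$ and $\widetilde{X}_{22}=\{G\setminus P_{22}\colon G\in X_{22}\}$ become cross-intersecting subfamilies of $\binom{[n]\setminus(R\cup Q)}{k-2}$. Applying Proposition~\ref{prop-3.4} with $m=n-4$ and $\ell=k-2$ (valid for $n\geq 2k$), whenever $\min\{|X_{11}|,|X_{22}|\}\geq \binom{n-7}{k-5}+\binom{n-8}{k-5}$ I get
\[
|X_{11}|+|X_{22}|\leq 2\binom{n-5}{k-3}.
\]

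I would then split into cases. If the above minimum condition holds, the two-pair bound together with Proposition~\ref{prop-3.4} yields $|\hf_R\cap\hf_Q|\leq 2M+2\binom{n-5}{k-3}$. Otherwise, by symmetry one of $|X_{11}|,|X_{22}|$, say $|X_{22}|$, is strictly less than $\binom{n-7}{k-5}+\binom{n-8}{k-5}$, and the three-pair bound gives $|\hf_R\cap\hf_Q|\leq 3M+\binom{n-7}{k-5}+\binom{n-8}{k-5}$. Taking the larger of the two options in this disjunction proves \eqref{ineq-FQR-general}. For the ``moreover'' assertion, if $M\geq 2\binom{n-5}{k-3}$ then
\[
2M+2\binom{n-5}{k-3}\leq 3M\leq 3M+\binom{n-7}{k-5}+\binom{n-8}{k-5},
\]
so the first term always dominates and \eqref{ineq-FQR-general2} follows. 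The only delicate step is the combinatorial verification that the residual of the three-pair decomposition is exactly $X_{22}$; once this is checked, the remainder is a clean application of Proposition~\ref{prop-3.4}.
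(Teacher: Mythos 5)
Your proof is correct, and it reaches the bound by a genuinely different decomposition. The paper first double-counts: with $\hb=\hf_R\cap\hf_Q$ partitioned into $\hb_2,\hb_3,\hb_4$ by the size of the trace on $R\cup Q$, it uses $\sum_{x\in R}\sum_{y\in Q}|\tilde{\hf}(x,y)|=|\hb_2|+2|\hb_3|+4|\hb_4|$ to get $2|\hb|\le 4M+|\hb_2|$, and then bounds $|\hb_2|$ by applying Proposition~\ref{prop-3.4} to \emph{both} opposite-trace pairs $(\hb_2(x_1,y_1),\hb_2(x_2,y_2))$ and $(\hb_2(x_1,y_2),\hb_2(x_2,y_1))$, taking a max for each. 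You instead observe directly that the links $\hf(P_{11}),\hf(P_{12}),\hf(P_{21})$ cover all of $\hb$ except the exact-trace class $X_{22}$ (and dually for $X_{11}$), giving $|\hb|\le 3M+\min\{|X_{11}|,|X_{22}|\}$ alongside the complementary covering $|\hb|\le 2M+|X_{11}|+|X_{22}|$; you then invoke Proposition~\ref{prop-3.4} only once, on the single diagonal pair $(\widetilde X_{11},\widetilde X_{22})$, and the case split on whether $\min\{|X_{11}|,|X_{22}|\}$ clears the proposition's threshold replaces the paper's per-pair max. Both routes land on exactly the same right-hand side of \eqref{ineq-FQR-general}, and the ``moreover'' clause is a mechanical comparison in either case. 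What you gain is a shorter, single application of Proposition~\ref{prop-3.4} and no averaging identity; what you pay is the small combinatorial check (which you carry out correctly) that avoiding three of the four crossing pairs forces the trace on $R\cup Q$ to be exactly the fourth.
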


\begin{proof}
Set $\hb=\hf_R\cap \hf_Q$ and define the partition $\hb=\hb_2\cup \hb_3\cup \hb_4$ via
\[
\hb_i =\left\{B\in \hb\colon |B\cap (R\cup Q)|=i\right\}.
\]
For $x\in R$, $y\in Q$ define
\[
\tilde{\hf}(x,y)=\left\{F\in \hf\colon x\in F \mbox{ and }y\in F\right\}.
\]
Note that if $B\in \hb_4$ then $B\in \tilde{\hf}(x,y)$ for all four choices $x\in R$, $y\in Q$ and if $B\in \hb_3$ then $B\in \tilde{\hf}(x,y)$ for two choices $x\in R$, $y\in Q$. Thus,
\[
|\hb_2|+2|\hb_3|+4|\hb_4|=\sum_{x\in R}\sum_{y\in Q} |\tilde{\hf}(x,y)|.
\]
Therefore,
\begin{align}\label{ineq-hb}
2|\hb|=2(|\hb_2|+|\hb_3|+|\hb_4|)\leq \sum_{x\in R}\sum_{y\in Q}|\tilde{\hf}(x,y)|+|\hb_2|\leq 4M+|\hb_2|.
\end{align}

Assume $R=(x_1,x_2)$, $Q=(y_1,y_2)$. Note that $\hb_2(x_i,y_j)\subset\binom{[n]\setminus (R\cup Q)}{k-2}$ for $1\leq i,j\leq 2$. Since $\hf$ is intersecting, $(\hb_2(x_1,y_1),\hb_2(x_2,y_2))$ and $(\hb_2(x_1,y_2),\hb_2(x_2,y_1))$ are cross-intersecting pairs. If either of the two families consists of less than $\binom{n-7}{k-5}+\binom{n-8}{k-5}$ sets, then
\[
|\hb_2(x_1,y_1)|+|\hb_2(x_2,y_2)|\leq M+\binom{n-7}{k-5}+\binom{n-8}{k-5}.
\]
If $|\hb_2(x_i,y_i)|\geq \binom{n-7}{k-5}+\binom{n-8}{k-5}$ for both $i=1,2$, then by applying Proposition \ref{prop-3.4} with $m=n-4$ and $\ell=k-2$ we infer
\[
|\hb_2(x_1,y_1)|+|\hb_2(x_2,y_2)|\leq 2\binom{n-5}{k-3}.
\]
Thus
\begin{align}\label{hb2-1}
|\hb_2(x_1,y_1)|+|\hb_2(x_2,y_2)|\leq \max\left\{M+\binom{n-7}{k-5}+\binom{n-8}{k-5}, 2\binom{n-5}{k-3}\right\}.
\end{align}
Similarly,
\begin{align}\label{hb2-2}
|\hb_2(x_1,y_2)|+|\hb_2(x_2,y_1)|\leq \max\left\{M+\binom{n-7}{k-5}+\binom{n-8}{k-5}, 2\binom{n-5}{k-3}\right\}.
\end{align}
Note that $|\hb_2|=|\hb_2(x_1,y_1)|+|\hb_2(x_2,y_2)|+|\hb_2(x_1,y_2)|+|\hb_2(x_2,y_1)|$.
Substituting \eqref{hb2-1} and \eqref{hb2-2} into \eqref{ineq-hb}, we conclude that \eqref{ineq-FQR-general} holds. If $M\geq 2\binom{n-5}{k-3}$ then \eqref{ineq-FQR-general2} follows from \eqref{ineq-FQR-general} directly.
\end{proof}

\begin{proof}[Proof of Theorem \ref{thm-main1}]
Arguing indirectly assume that $\hf\subset \binom{[n]}{k}$ is  intersecting, $|\hf|\geq 36\binom{n-3}{k-3}$ and $\varrho(\hf)\leq \frac{1}{2}$. Without loss of generality suppose that
$\hf$ is shifted ad extremis with respect to $\{\varrho(\hf)\leq \frac{1}{2}\}$ and let $\mathds{H}$ be the graph formed by the shift-resistant pairs.
\begin{claim}
$\mathds{H}\neq \emptyset$.
\end{claim}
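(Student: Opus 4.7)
The plan is to prove $\mathds H\neq\emptyset$ by contradiction. Supposing $\mathds H=\emptyset$, no pair is shift-resistant, so $S_{ij}(\hf)=\hf$ for every $1\leq i<j\leq n$ and $\hf$ is initial. The goal is then to derive $\varrho(\hf)>\tfrac{1}{2}$, contradicting the standing hypothesis under which the shifting ad extremis was performed.

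For initial $\hf$, the pair-degree $|\hf(x,y)|$ is maximised at $(x,y)=(1,2)$, so it suffices to verify the hypothesis of Corollary \ref{cor-1} for this pair,
\[
|\hf(1,2)|\ \geq\ \binom{n-3}{k-3}+\binom{n-4}{k-3}+\binom{n-6}{k-4},
\]
since Corollary \ref{cor-1} (with $\hg=\hf$) then yields the desired $\varrho(\hf)>\tfrac{1}{2}$.

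I would establish this lower bound in two moves. First, $\varrho(\hf)\leq\tfrac{1}{2}$ gives $|\hf(\bar 1)|\geq |\hf|/2\geq 18\binom{n-3}{k-3}$, while Fact \ref{fact-2} shows that $\hf(\bar 1)$ is $2$-intersecting in $\binom{[2,n]}{k}$; repeating the same injection argument with $2$ in the role of $1$ further shows that $\hf(\bar 1,\bar 2)$ is $3$-intersecting in $\binom{[3,n]}{k}$, so by exact $t$-intersecting EKR we have $|\hf(\bar 1,\bar 2)|\leq\binom{n-5}{k-3}$ and hence $|\hf(\bar 1,2)|\geq 18\binom{n-3}{k-3}-\binom{n-5}{k-3}$. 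Second, initiality provides a many-to-many bipartite map $F\mapsto (F\setminus\{j\})\cup\{1\}$, $j\in F\setminus\{2\}$, from $\{F\in\hf\colon 1\notin F,\,2\in F\}$ to $\{F\in\hf\colon 1,2\in F\}$: each left vertex has $k-1$ images (all valid, since $(F\setminus\{j\})\cup\{1\}\prec F$ lies in $\hf$ by initiality), each right vertex has at most $n-k$ preimages, so double counting yields $|\hf(1,2)|\geq\tfrac{k-1}{n-k}|\hf(\bar 1,2)|$.

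The main obstacle is pushing the arithmetic through uniformly. Note that $|\hf|\geq 36\binom{n-3}{k-3}$ together with $|\hf|\leq 2|\hf(\bar 1)|\leq 2\binom{n-3}{k-2}$ forces $n\geq 19k-O(1)$; in this large-$n$ regime the factor $(k-1)/(n-k)$ in the bipartite estimate degrades, and the plain double counting may prove too weak. The fallback is to invoke stability of the $2$-intersecting EKR bound: $|\hf(\bar 1)|$ being comparable to the maximum $\binom{n-3}{k-2}$ forces $\hf(\bar 1)$ to coincide, up to a low-order perturbation, with the initial $2$-star $\{F\in\binom{[2,n]}{k}\colon \{2,3\}\subset F\}$, at which point initiality of $\hf$ immediately places the full family $\binom{[4,n]}{k-2}$ inside $\hf(1,2)$, giving $|\hf(1,2)|\geq\binom{n-3}{k-2}$ and overshooting the required threshold by a wide margin.
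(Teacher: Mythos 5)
Your proposal takes a genuinely different route from the paper, and unfortunately it has a gap that you yourself flag but do not close. You aim to verify the hypothesis of Corollary~\ref{cor-1} for the pair $(1,2)$, i.e.\ to prove $|\hf(1,2)|\geq\binom{n-3}{k-3}+\binom{n-4}{k-3}+\binom{n-6}{k-4}$, but neither of your two suggested mechanisms actually delivers this bound over the full range of $n$. The double-counting estimate $|\hf(1,2)|\geq\tfrac{k-1}{n-k}|\hf(\bar 1,2)|$ together with $|\hf(\bar 1,2)|\lesssim 18\binom{n-3}{k-3}$ only reaches the target $\approx 2\binom{n-3}{k-3}$ when $\tfrac{k-1}{n-k}\gtrsim\tfrac19$, i.e.\ $n\lesssim 10k$; but as you correctly compute, $|\hf|\geq 36\binom{n-3}{k-3}$ and $|\hf(\bar 1)|\leq\binom{n-3}{k-2}$ force $n\gtrsim 19k$, so the two requirements are incompatible and the double counting never suffices. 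The stability fallback is also unsound: for $n\gg k$ the lower bound $|\hf(\bar 1)|\geq 18\binom{n-3}{k-3}$ is a vanishingly small fraction of the $2$-intersecting maximum $\binom{n-3}{k-2}=\tfrac{n-k}{k-2}\binom{n-3}{k-3}$, so $\hf(\bar 1)$ need not be close to a $2$-star at all, and no stability theorem applies.

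The paper sidesteps all of this with a one-line shadow argument that is uniform in $n$. Since $\hf$ is initial, $\hf(\bar 1)$ is $2$-intersecting (Fact~\ref{fact-2}), and the Katona Intersecting Shadow Theorem gives $|\partial\hf(\bar 1)|>|\hf(\bar 1)|$ with no dependence on the ratio $n/k$. Combining with $\partial\hf(\bar 1)\subset\hf(1)$ (inequality~\eqref{ineq-shifted}) yields $|\hf(1)|\geq|\partial\hf(\bar 1)|>|\hf(\bar 1)|$, i.e.\ $\varrho(\hf)>\tfrac12$ directly. This avoids the detour through pair-degrees entirely and, in particular, needs no quantitative use of $|\hf|\geq 36\binom{n-3}{k-3}$ in this claim. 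If you want to salvage a pair-degree argument, you would need a bound whose ratio does not degrade with $n$; the shadow theorem is essentially the clean way to encode that.
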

\begin{proof}
If $\mathds{H}=\emptyset$ then $\hf$ is initial. By Fact \ref{fact-2}, $\hf(\bar{1})$ is 2-intersecting. Then  by the Katona Intersecting Shadow Theorem \cite{Katona}, we see $|\partial\hf(\bar{1})|>|\hf(\bar{1})|$. By \eqref{ineq-shifted} $\partial \hf(\bar{1})\subset \hf(1)$. Thus
\[
|\hf(1)|\geq |\partial\hf(\bar{1})|>|\hf(\bar{1})|,
\]
it follows that $\varrho(\hf)>\frac{1}{2}$, contradicting our assumption.
\end{proof}

By Corollary \ref{cor-1} and \eqref{ineq-hfxy}, we may assume that for every $P\in \binom{[n]}{2}$,
\begin{align}\label{ineq-hfpupbd}
|\hf(P)|< \binom{n-3}{k-3}+\binom{n-4}{k-3}+\binom{n-6}{k-4}< 2\binom{n-3}{k-3}.
\end{align}
Note that for any $(i,j)\in \mathds{H}$ we have
\begin{align}\label{ineq-sijhf}
|S_{ij}(\hf)(i)|>\frac{1}{2}|\hf|.
\end{align}
It is equivalent to
\begin{align}\label{ineq-sijshift}
|\hf(i,j)|+|\hf(i,\bar{j})\cup \hf(\bar{i},j)|> \frac{1}{2}|\hf|.
\end{align}

\begin{claim}
$\mathds{H}$ does not contain three pairwise disjoint edges.
\end{claim}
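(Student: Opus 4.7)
The plan is to argue by contradiction: I assume $\mathds{H}$ contains three pairwise disjoint edges $P_1, P_2, P_3$ and double-count $\sum_{1 \leq s < t \leq 3} |\hf_{P_s} \cap \hf_{P_t}|$ in two different ways, obtaining competing lower and upper bounds that collide.

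For the lower bound, shift-resistance of each $P_s$ together with \eqref{ineq-sijshift} yields $|\hf_{P_s}| > \tfrac{1}{2}|\hf|$, since the left-hand side of \eqref{ineq-sijshift} is bounded above by $|\hf_{P_s}|$ (the cardinality of the union $\hf(i,\bar j)\cup\hf(\bar i,j)$ differs from $|\hf_{P_s}|-|\hf(i,j)|$ by the non-negative ``matched pair'' count). For each $F \in \hf$ set $x_F = |\{s : F \cap P_s \neq \emptyset\}| \in \{0,1,2,3\}$ and $n_i = |\{F \in \hf : x_F = i\}|$. Summing the three strict inequalities gives $n_1 + 2n_2 + 3n_3 > \tfrac{3}{2}|\hf|$, which after using $n_0 + n_1 + n_2 + n_3 = |\hf|$ rearranges to $n_2 + 3n_3 > \tfrac{1}{2}|\hf| + n_0 + n_3 \geq \tfrac{1}{2}|\hf| \geq 18\binom{n-3}{k-3}$. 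An elementary double count, noting that a set with $x_F = 2$ lies in exactly one pairwise intersection while a set with $x_F = 3$ lies in all three, shows $\sum_{s<t}|\hf_{P_s} \cap \hf_{P_t}| = n_2 + 3n_3$.

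For the upper bound I apply Lemma~\ref{lem-3.5} to each of the three disjoint pairs of edges. Using \eqref{ineq-hfpupbd} I take $M = \binom{n-3}{k-3} + \binom{n-4}{k-3} + \binom{n-6}{k-4}$ as an upper bound on $|\hf(P)|$ for every pair $P$; since $M \geq 2\binom{n-5}{k-3}$, the ``moreover'' clause of Lemma~\ref{lem-3.5} applies and yields $|\hf_{P_s} \cap \hf_{P_t}| \leq 3M + \binom{n-7}{k-5} + \binom{n-8}{k-5}$ for each of the three pairs.

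The main obstacle, and the only non-routine step, is to verify that this per-pair bound is at most $6\binom{n-3}{k-3}$, so that summing over the three pairs gives $n_2 + 3n_3 \leq 18\binom{n-3}{k-3}$, contradicting the strict lower bound above. After substituting $M$, repeated application of Pascal's identity (first $\binom{n-3}{k-3} = \binom{n-4}{k-3} + \binom{n-4}{k-4}$, then $\binom{n-4}{k-4} = \binom{n-6}{k-4} + \binom{n-6}{k-5} + \binom{n-5}{k-5}$) reduces the required inequality to $\tfrac{1}{3}\binom{n-7}{k-5} + \tfrac{1}{3}\binom{n-8}{k-5} \leq \binom{n-6}{k-5} + \binom{n-5}{k-5}$, which is immediate since each summand on the left is bounded by $\binom{n-5}{k-5}$. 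This closes the contradiction.
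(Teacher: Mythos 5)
Your proof is correct and is essentially the paper's argument: shift-resistance yields $|\hf_{P_s}|>\tfrac{1}{2}|\hf|$ via \eqref{ineq-sijshift}, Lemma~\ref{lem-3.5} with $M=\binom{n-3}{k-3}+\binom{n-4}{k-3}+\binom{n-6}{k-4}$ bounds each $|\hf_{P_s}\cap\hf_{P_t}|$ by $6\binom{n-3}{k-3}\le\tfrac{1}{6}|\hf|$, and comparing lower and upper bounds gives the contradiction. The only cosmetic difference is that you recast the paper's inclusion--exclusion estimate $|\hg_1\cup\hg_2\cup\hg_3|\ge\sum_r|\hg_r|-\sum_{r<r'}|\hg_r\cap\hg_{r'}|>|\hf|$ as an explicit double count with the multiplicities $n_i$, which is the same accounting.
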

\begin{proof}
Suppose for contradiction that $(a_1,b_1), (a_2,b_2),(a_3,b_3)$ are pairwise disjoint and
\[
\varrho(S_{a_rb_r}(\hf))>\frac{1}{2} \mbox{ for } r=1,2,3.
\]
Let $\hg_r= \hf_{\{a_r,b_r\}}$, $r=1,2,3$. Then by \eqref{ineq-sijshift} we have for $r=1,2,3$
\begin{align*}
|\hg_r| = & |\hf(a_r,b_r)|+|\hf(a_r,\overline{b_r})\cup \hf(\overline{a_r},b_r)|>  \frac{1}{2}|\hf|.
\end{align*}
For $1\leq r<r'\leq 3$, applying Lemma \ref{lem-3.5} with $M=\binom{n-3}{k-3}+\binom{n-4}{k-3}+\binom{n-6}{k-4}>2\binom{n-5}{k-3}$, from \eqref{ineq-FQR-general2} we infer
\begin{align*}
|\hg_r\cap \hg_{r'}|&\leq 3M+\binom{n-7}{k-5}+\binom{n-8}{k-5}< 6\binom{n-3}{k-3} \leq \frac{1}{6}|\hf|.
\end{align*}
Then
\[
|\hg_1\cup\hg_2\cup \hg_3| > \frac{3}{2}|\hf|-|\hg_1\cap \hg_2|-|\hg_1\cap \hg_3|-|\hg_2\cap \hg_3|> |\hf|,
\]
a contradiction.
\end{proof}

Now we distinguish two cases.

\vspace{5pt}
{\noindent\bf Case 1.} $\mathds{H}$ has matching number one.
\vspace{5pt}

For notational convenience assume that $(n-1,n)\in \mathds{H}$. Since $\mathds{H}$ has matching number one, we infer that $S_{ij}(\hf)=\hf$ for any $(i,j)$ with $1\leq i<j\leq n-2$.
Define
\begin{align*}
\hg&=\{F\setminus\{n-1,n\}\colon F\in \hf,|F\cap\{n-1,n\}|=1\}\subset\binom{[n-2]}{k-1},\\[5pt]
\hh&=\{F\in \hf\colon F\subset [n-2]\}\subset \binom{[n-2]}{k}.
\end{align*}
The key to the proof is that $\hg,\hh$ are initial. Clearly, by \eqref{ineq-sijshift} and \eqref{ineq-hfpupbd} we have
\begin{align}\label{ineq-2.1}
|\hg|>\frac{1}{2}|\hf|-|\hf(n-1,n)|>\frac{1}{2}|\hf|-2\binom{n-3}{k-3}.
\end{align}
Now consider $\hg(\bar{1}),\hh(1)\subset \binom{[2,n-2]}{k-1}$. Since $\hf$ is intersecting, $\hg(\bar{1}),\hh(1)$ are cross-intersecting. Since \eqref{ineq-hfpupbd} implies
\begin{align}\label{ineq-2.2}
|\hg(1)|\leq |\hf(1,n-1)|+|\hf(1,n)|\leq 4\binom{n-3}{k-3},
\end{align}
by \eqref{ineq-2.1} and \eqref{ineq-2.2} it follows that
\[
|\hg(\bar{1})|=|\hg|-|\hg(1)|> \frac{1}{2}|\hf|-6\binom{n-3}{k-3} \geq 12\binom{n-3}{k-3}.
\]
Moreover, by $\hg(\bar{1},2)\subset \hf(2,n-1)\cup \hf(2,n)$ we infer
\[
|\hg(\bar{1},\bar{2})|=|\hg(\bar{1})|-|\hg(\bar{1},2)|>|\hg(\bar{1})|-|\hf(2,n-1)|- |\hf(2,n)| >8\binom{n-3}{k-3}.
\]
Since $\hg(\bar{1})$ and $\hh(1)$ are cross-intersecting and both initial, by Fact \ref{fact-2} we infer $\hg(\bar{1},\bar{2})$ and $\hh(1,\bar{2})$ are cross 2-intersecting.
Since $|\hg(\bar{1},\bar{2})|>\binom{n-4}{k-3}$,  by \eqref{ineq-tinterha} we see
\[
|\hh(1,\bar{2})|\leq \binom{n-4}{k-4}<\binom{n-3}{k-3}.
\]
Since $\hh$ is shifted and intersecting, we infer that $\hh(\bar{1},\bar{2})$ is 3-intersecting. Then by \eqref{ineq-tinterhb} we have
\[
|\hh(\bar{1},\bar{2})|\leq \binom{n-4}{k-3}<\binom{n-3}{k-3}.
\]
Note that by initiality $\hh(\bar{1},2)\subset \hh(1,\bar{2})$. Since \eqref{ineq-hfpupbd} implies $|\hh(1,2)|\leq |\hf(1,2)|< 2\binom{n-3}{k-3}$, we infer
\begin{align*}
|\hh|&= |\hh(1,2)|+|\hh(\bar{1},2)|+ |\hh(1,\bar{2})|+|\hh(\bar{1},\bar{2})|<5 \binom{n-3}{k-3}.
\end{align*}
Then $\varrho(\hf)\leq \frac{1}{2}$ implies
\[
|\hf(\overline{n-1},n)|\geq |\hf(\overline{n-1})|-|\hh|\geq \frac{1}{2}|\hf|-|\hh|>13 \binom{n-3}{k-3}
\]
and
\[
|\hf(n-1,\bar{n})|\geq |\hf(\bar{n})|-|\hh|\geq \frac{1}{2}|\hf|-|\hh|>13 \binom{n-3}{k-3}.
\]
Now
\[
|\hf(\overline{\{1,n-1\}},n)|\geq |\hf(\overline{n-1},n)|-|\hf(1,n)|> 11\binom{n-3}{k-3}
\]
and
\[
|\hf(\overline{\{1,n\}},n-1)|\geq |\hf(n-1,\overline{n})|-|\hf(1,n-1)|> 11\binom{n-3}{k-3}.
\]
But $\hf(\overline{\{1,n-1\}},n), \hf(\overline{\{1,n\}},n-1)\subset \binom{[2,n-2]}{k-1}$ are cross 2-intersecting, contradicting \eqref{ineq-tinterhb}.

\vspace{5pt}
{\noindent\bf Case 2.} $\mathds{H}$ has matching number two.
\vspace{5pt}

For notational convenience  assume that $(n-1,n),(n-3,n-2)\in \mathds{H}$. Since $\mathds{H}$ has matching number two, we infer that $S_{ij}(\hf)=\hf$ for any $(i,j)$ with $1\leq i<j\leq n-4$. Define $\ha,\hb \subset \binom{[n-4]}{k-1}$ as
\begin{align*}
\ha=& \{F\setminus \{n-1,n\}\colon F\in \hf, |F\cap \{n-1,n\}|=1, F\cap \{n-3,n-2\}=\emptyset\},\\[5pt]
\hb=&\{F\setminus\{n-3,n-2\}\colon F\in \hf, F\cap \{n-1,n\}=\emptyset, |F\cap \{n-3,n-2\}|=1\}.
\end{align*}
Then by \eqref{ineq-sijshift} and \eqref{ineq-hfpupbd} we have
\begin{align*}
|\ha|&> \frac{1}{2}|\hf|-|\hf(n-1,n)|-\sum_{i\in \{n-1,n\},j\in \{n-3,n-2\}}|\hf(i,j)|> 8\binom{n-3}{k-3},\\[5pt]
|\hb|&> \frac{1}{2}|\hf|-|\hf(n-3,n-2)|-\sum_{i\in \{n-1,n\},j\in \{n-3,n-2\}}|\hf(i,j)|> 8\binom{n-3}{k-3}.
\end{align*}
Since $\ha,\hb$ are initial, it follows that $\ha(\bar{1})$ and $\hb(\bar{1})$ are cross 2-intersecting. By symmetry assume $|\ha(\bar{1})|\leq |\hb(\bar{1})|$. By \eqref{ineq-tinterhb} we have
\[
|\ha(\bar{1})|\leq \binom{n-5}{k-3}<\binom{n-3}{k-3}
\]
and thereby
\[
|\ha(1)|=|\ha|-|\ha(\bar{1})|> 7\binom{n-3}{k-3}.
\]
It follows that
\[
\mbox{either } |\ha(1,n-1)|\geq 3.5\binom{n-3}{k-3} \mbox{ or }|\ha(1,n)|\geq 3.5\binom{n-3}{k-3},
\]
contradicting \eqref{ineq-hfpupbd}.
\end{proof}

Combining Theorem \ref{thm-main1} with $n\geq 36k$, $|\hf|\geq \binom{n-2}{k-2}>36\binom{n-3}{k-3}$ and repeating the proof of Theorem 5 in \cite{F2020}, one can check that Theorem \ref{thm-main0} follows. As a matter of fact, it is explicitly stated in \cite{F2020} that knowing the corresponding $\varrho(\hf)>\frac{1}{2}$ result with $n\geq ck$, $c\geq 10$ would imply the diversity bound for $n>ck$. That is, improving 36 in Theorem \ref{thm-main1} would automatically improve 36 in Theorem \ref{thm-main0} as well.

\section{Conditions guaranteeing $\varrho(\hf)>\frac{2}{3}-o(1)$}

In this section, by using a diversity result for cross-intersecting families in \cite{FKu2021} and following  a similar but simpler approach as  in Section 3, we prove Theorem \ref{thm-main2}.

Let us state the diversity result for cross-intersecting families as follows.

\begin{thm}[\cite{FKu2021}]\label{thm-3.1}
Let $n\geq 2k$. Suppose that $\ha,\hb\subset \binom{n}{k}$ are cross-intersecting. If
\[
|\ha|,|\hb|> \binom{n-1}{k-1}-\binom{n-u-1}{k-1}+\binom{n-u-1}{k-u} \mbox{ with }3\leq u\leq k,
\]
then
\begin{align}\label{ineq-diversity2}
\gamma(\ha),\gamma(\hb) <\binom{n-u-1}{k-u},
\end{align}
moreover, both families have the same (unique) element of the largest degree.
\end{thm}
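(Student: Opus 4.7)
The plan is to reduce to initial cross-intersecting pairs via simultaneous shifting, then exploit the cross $2$-intersecting structure of the non-star parts via Hilton's Lemma. First I would apply $S_{ij}$ to $\ha$ and $\hb$ simultaneously for every $1\le i<j\le n$ until both become initial. Simultaneous shifting preserves the cross-intersecting property and the two cardinalities, so we may assume $\ha^{*},\hb^{*}$ are initial with $|\ha^{*}|=|\ha|$, $|\hb^{*}|=|\hb|$, and in an initial family vertex $1$ is an element of largest degree. By Fact \ref{fact-2}, $\ha^{*}(\bar 1),\hb^{*}(\bar 1)\subset\binom{[2,n]}{k}$ is a cross $2$-intersecting pair.

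The heart of the argument is the following dichotomy: either $\min\{|\ha^{*}(\bar 1)|,|\hb^{*}(\bar 1)|\}<\binom{n-u-1}{k-u}$ (in which case we are essentially done), or, using Hilton's Lemma, we may replace both $\ha^{*}(\bar 1)$ and $\hb^{*}(\bar 1)$ by their lexicographic initial segments. Once both contain $\bigl\{F\in\binom{[2,n]}{k}\colon [2,u+1]\subset F\bigr\}$, the cross $2$-intersecting constraint pins the star parts $\ha^{*}(1),\hb^{*}(1)$, both of which live in $\binom{[2,n]}{k-1}$, to the lex-initial family on the complement of $[2,u+1]$; this yields $|\ha^{*}(1)|,|\hb^{*}(1)|\le \binom{n-1}{k-1}-\binom{n-u-1}{k-1}$. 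Adding back the bound on the non-star parts gives $|\ha^{*}|,|\hb^{*}|\le |\ha_u(n,k)|$, contradicting the hypothesis. Thus $|\ha^{*}(\bar 1)|=\gamma(\ha^{*})<\binom{n-u-1}{k-u}$, and symmetrically for $\hb^{*}$.

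To transfer the conclusion back to the unshifted $\ha,\hb$, I would observe that the diversity bound is preserved under reverse-shifting because simultaneous shifting only moves mass onto vertex $1$; if in $\ha$ the minimum-degree statistic $\gamma$ were achieved at some vertex $i\ne 1$ with value $\ge\binom{n-u-1}{k-u}$, one could rerun the shifting sequence targeting $i$ and obtain the same contradiction. For the ``common unique largest-degree element'' statement, I would argue by contradiction. Suppose $\ha$ has largest-degree element $a$ and $\hb$ has largest-degree element $b$ with $a\ne b$ (or that the largest degree is attained twice). Then both $\ha(\bar a)$ and $\hb(\bar b)$ have size at least $\gamma$, and cross-intersection between $\ha(\bar a)$ and $\hb(b)$, as well as between $\hb(\bar b)$ and $\ha(a)$, forces the pair $(\ha(\bar a,\bar b),\hb(\bar a,\bar b))$ to behave as a cross $2$-intersecting pair in $\binom{[n]\setminus\{a,b\}}{k}$, whose size can then be shown to push one of $|\ha|,|\hb|$ below $|\ha_u(n,k)|$, a contradiction.

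The main obstacle will be Step~2: pinning down the extremal configuration for cross $2$-intersecting pairs of the prescribed size so that the threshold comes out to exactly $\binom{n-u-1}{k-u}$. The generic cross $t$-intersecting bound of Theorem~\ref{thm-tintersecting} is not sharp enough; one must invoke Hilton's Lemma together with a careful identification of the extremal lex-initial family and a matching upper bound on the star parts. The uniqueness half is a secondary but delicate task, since shifting a priori collapses distinct maximum-degree vertices of $\ha$ and $\hb$ onto the common vertex $1$ and the argument must be reversed carefully.
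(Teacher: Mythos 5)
The paper does not prove this theorem; it is quoted from~\cite{FKu2021} and used as a black box, so there is no internal proof to compare your argument against. Judging the proposal on its own merits, the reduction to initial families is where it breaks. Diversity is monotone \emph{non-increasing} under shifting: one checks that $|S_{ij}(\hf)(i)|\ge\max\{|\hf(i)|,|\hf(j)|\}$, hence $\Delta(S_{ij}(\hf))\ge\Delta(\hf)$ and therefore $\gamma(S_{ij}(\hf))\le\gamma(\hf)$. Thus $\gamma(\ha^{*})\le\gamma(\ha)$, and an upper bound $\gamma(\ha^{*})<\binom{n-u-1}{k-u}$ says nothing at all about $\gamma(\ha)$ --- the inequality points the wrong way. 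Your suggested repair (``rerun the shifting sequence targeting $i$'') does not help: even after relabeling so that the maximum-degree vertex of $\ha$ is $1$, the shifts $S_{1j}$ still strictly shrink $|\ha(\bar 1)|$, so a bound on $|\ha^{*}(\bar 1)|$ cannot be pulled back to $|\ha(\bar 1)|$. Worse, the ``common unique maximum-degree element'' conclusion is structurally invisible after shifting, since both $\ha^{*}$ and $\hb^{*}$ automatically have $1$ as their unique maximum-degree element regardless of what $\ha,\hb$ looked like; and your sketch for uniqueness invokes cross $2$-intersection of $\ha(\bar a,\bar b)$ and $\hb(\bar a,\bar b)$, which is only guaranteed for initial families (Fact~\ref{fact-2}), not for the original ones.

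An argument that retains the needed structure should work directly with the unshifted families. Assume $\gamma(\ha)\ge\binom{n-u-1}{k-u}$. Then for \emph{every} vertex $v$, $|\ha(\bar v)|\ge\binom{n-u-1}{k-u}$, and since $\ha(\bar v)\subset\binom{[n]\setminus\{v\}}{k}$ and $\hb(v)\subset\binom{[n]\setminus\{v\}}{k-1}$ are cross-intersecting, Hilton's Lemma applied to the lex-initial replacements forces $|\hb(v)|\le\binom{n-1}{k-1}-\binom{n-u-1}{k-1}$. As $v$ was arbitrary, $\Delta(\hb)\le\binom{n-1}{k-1}-\binom{n-u-1}{k-1}$, whence the size hypothesis gives $\gamma(\hb)=|\hb|-\Delta(\hb)>\binom{n-u-1}{k-u}$; by symmetry $\gamma(\ha)>\binom{n-u-1}{k-u}$. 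Bootstrapping --- the improved lower bound on the diversities enlarges the lex-initial family and strictly shrinks the permissible $\Delta$ --- drives both diversities up without bound, a contradiction, and the same computation singles out the common maximum-degree vertex. Your shifting reduction discards exactly the per-vertex information that this argument (and the uniqueness claim) depends on.
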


\begin{lem}
Suppose that $\hf\subset \binom{[n]}{k}$ is an intersecting family and for some $\{x,y\}\in \binom{[n]}{2}$
\begin{align}
&|\hf(x,y)|\geq \binom{n-3}{k-3}+\binom{n-4}{k-3}+\binom{n-5}{k-3}+\binom{n-7}{k-4}. \mbox{ Then} \label{ineq-hfxy2}\\[5pt]
&|\hf(\bar{x},\bar{y})|\leq \binom{n-6}{k-4}+\binom{n-7}{k-4}.\label{ineq-hgxy2}
\end{align}
\end{lem}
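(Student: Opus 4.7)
The approach mirrors the Hilton's-Lemma proof of the previous lemma, pushed one more layer deep in the lexicographic order to match the sharper four-term lower bound on $|\hf(x,y)|$. Without loss of generality take $(x,y)=(n-1,n)$. Then $\hf(x,y)\subset\binom{[n-2]}{k-2}$ and $\hf(\bar x,\bar y)\subset\binom{[n-2]}{k}$, and since $\hf$ is intersecting these two families are cross-intersecting on $[n-2]$. By Hilton's Lemma the lex initial segments $\hl(n-2,k-2,|\hf(x,y)|)$ and $\hl(n-2,k,|\hf(\bar x,\bar y)|)$ are cross-intersecting as well, so it suffices to upper-bound the latter.

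The next step is to read off from \eqref{ineq-hfxy2} which sets are forced into $\hl(n-2,k-2,|\hf(x,y)|)$. Stratifying $(k-2)$-subsets of $[n-2]$ by their minimum element, the three binomials $\binom{n-3}{k-3}+\binom{n-4}{k-3}+\binom{n-5}{k-3}$ enumerate exactly the sets $F$ with $\min F\in\{1,2,3\}$, i.e.\ $F\cap\{1,2,3\}\neq\emptyset$. The next block in lex consists of sets with $\min F=4$, and its first sub-block is $\{F:\{4,5\}\subset F,\,F\cap\{1,2,3\}=\emptyset\}$, of size $\binom{n-7}{k-4}$. Hence \eqref{ineq-hfxy2} forces
\[
\hl(n-2,k-2,|\hf(x,y)|)\supset\mathcal{P}:=\left\{F\in\binom{[n-2]}{k-2}:F\cap\{1,2,3\}\neq\emptyset\right\}\cup\left\{F\in\binom{[n-2]}{k-2}:\{4,5\}\subset F\right\}.
\]

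By cross-intersection with $\mathcal{P}$, every $G\in\hl(n-2,k,|\hf(\bar x,\bar y)|)$ must contain $\{1,2,3\}$ and meet $\{4,5\}$: if $G$ omitted some $i\in\{1,2,3\}$, or missed $\{4,5\}$ entirely, then $n\geq 2k$ leaves room outside $G$ to build a matching $F\in\mathcal{P}$ disjoint from $G$. Counting $k$-subsets of $[n-2]$ containing $\{1,2,3\}$ and meeting $\{4,5\}$, split according to whether $4\in G$, gives $\binom{n-6}{k-4}+\binom{n-7}{k-4}$, which is exactly the bound in \eqref{ineq-hgxy2}. The only real obstacle is bookkeeping: verifying that the four binomial coefficients in \eqref{ineq-hfxy2} match precisely the first four blocks of the lex order on $\binom{[n-2]}{k-2}$; once that identification is made, the rest is a line-by-line adaptation of the earlier lemma.
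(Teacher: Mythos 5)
Your argument is correct and mirrors the paper's proof exactly: reduce to $(x,y)=(n-1,n)$, apply Hilton's Lemma to the cross-intersecting pair on $[n-2]$, identify the four lex-initial blocks forced by \eqref{ineq-hfxy2} (the sets meeting $\{1,2,3\}$ plus those containing $\{4,5\}$), use cross-intersection to force every set in the other lex segment to contain $\{1,2,3\}$ and meet $\{4,5\}$, and count. The paper carries out precisely this bookkeeping, so there is no gap to close.
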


\begin{proof}
For notational convenience  let $(x,y)=(n-1,n)$. Then $\hf(x,y)\subset\binom{[n-2]}{k-2}$, $\hf(\bar{x},\bar{y})\subset\binom{[n-2]}{k}$ and $\hf(x,y),\hf(\bar{x},\bar{y})$ are cross-intersecting. By Hilton's lemma the same holds for the lexicographic initial families. By \eqref{ineq-hfxy2} $\hl(n-2,k-2,|\hf(x,y)|)$ contains
\[
\left\{\bar{F}\in \binom{[n-2]}{k-2}\colon \bar{F}\cap\{1,2,3\}\neq \emptyset\right\}\bigcup \left\{\bar{F}\in \binom{[n-2]}{k-2}\colon \bar{F}\cap \{1,2,3\}=\emptyset, \{4,5\}\subset \bar{F}\right\}.
\]
By cross-intersection,
\[
\hl(n-2,k,|\hf(\bar{x},\bar{y})|)\subset \left\{F\in \binom{[n-2]}{k}\colon \{1,2,3\}\subset F\mbox{ and } F\cap \{4,5\}\neq \emptyset\right\}.
\]
These are $\binom{n-6}{k-4}+\binom{n-7}{k-4}$ sets. Hence $|\hf(\bar{x},\bar{y})|\leq \binom{n-6}{k-4}+\binom{n-7}{k-4}$.
\end{proof}

\begin{lem}\label{cor-2.5}
Let $0<\varepsilon\leq \frac{1}{6}$, $|\hf|\geq 12\binom{n-3}{k-3}$ and $n\geq \frac{k}{\varepsilon}$. Suppose that \eqref{ineq-hfxy2} holds for $\hf$ and some $\{x,y\}\in \binom{[n]}{2}$. Then $\varrho(\hf)>\frac{2}{3}-\varepsilon$.
\end{lem}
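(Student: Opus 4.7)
The plan is to apply Theorem~\ref{thm-3.1} (from \cite{FKu2021}) to the pair of cross-intersecting $(k-1)$-uniform families $\hf(x,\bar y)$ and $\hf(\bar x,y)$ on the $(n-2)$-element ground set $[n]\setminus\{x,y\}$, with parameter $u=3$. Write
\[
a=|\hf(x,y)|,\quad b=|\hf(\bar x,y)|,\quad b'=|\hf(x,\bar y)|,\quad c=|\hf(\bar x,\bar y)|,
\]
so $|\hf|=a+b+b'+c$, $|\hf(x)|=a+b'$, $|\hf(y)|=a+b$. The preceding lemma (via \eqref{ineq-hgxy2}) yields $c\le\binom{n-6}{k-4}+\binom{n-7}{k-4}\le 2\binom{n-6}{k-4}$, and by relabelling $x\leftrightarrow y$ I may assume $b\ge b'$. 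Throughout the plan I use the running estimate
\[
\frac{\binom{n-6}{k-4}}{\binom{n-3}{k-3}}=\frac{(k-3)(n-k)(n-k-1)}{(n-3)(n-4)(n-5)}\le\frac{k}{n-5}\le 2\varepsilon,
\]
which follows from $n\ge k/\varepsilon$ and $\varepsilon\le 1/6$ (for $k\ge 4$; the cases $k\le 3$ are essentially vacuous under the size hypothesis). The threshold of Theorem~\ref{thm-3.1} with $u=3$, $n'=n-2$, $k'=k-1$ expands by Pascal's rule to
\[
T=\binom{n-3}{k-2}-\binom{n-6}{k-2}+\binom{n-6}{k-4}=\binom{n-4}{k-3}+\binom{n-5}{k-3}+\binom{n-6}{k-3}+\binom{n-6}{k-4}\le(3+2\varepsilon)\binom{n-3}{k-3}.
\]
Split into Case 1 ($b'\le T$) and Case 2 ($b,b'>T$).

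In Case 1, combining $b'\le(3+2\varepsilon)\binom{n-3}{k-3}$ with $c\le 4\varepsilon\binom{n-3}{k-3}$ gives $b'+c\le(3+6\varepsilon)\binom{n-3}{k-3}\le(\tfrac14+\tfrac{\varepsilon}{2})|\hf|$ by $|\hf|\ge 12\binom{n-3}{k-3}$. Hence
\[
\varrho(\hf)\ge\frac{|\hf(y)|}{|\hf|}=1-\frac{b'+c}{|\hf|}\ge\frac34-\frac{\varepsilon}{2}>\frac23-\varepsilon,
\]
where the last inequality is $\tfrac{1}{12}+\tfrac{\varepsilon}{2}>0$, valid for all $\varepsilon\le 1/6$.

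In Case 2, Theorem~\ref{thm-3.1} supplies a common element $z\in[n]\setminus\{x,y\}$ of maximum degree in both $\hf(x,\bar y)$ and $\hf(\bar x,y)$, with $|\hf(x,\bar y)(\bar z)|,|\hf(\bar x,y)(\bar z)|<\binom{n-6}{k-4}$. As $\hf(x,\bar y)(z)$ and $\hf(\bar x,y)(z)$ are disjoint subfamilies of $\hf(z)$, one gets $|\hf(z)|>(b+b')-2\binom{n-6}{k-4}$. Summing the three degrees,
\[
|\hf(x)|+|\hf(y)|+|\hf(z)|>2(a+b+b')-2\binom{n-6}{k-4}=2|\hf|-2c-2\binom{n-6}{k-4},
\]
so the maximum yields $\varrho(\hf)>\tfrac23-(2c+2\binom{n-6}{k-4})/(3|\hf|)$. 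Since $2c+2\binom{n-6}{k-4}\le 6\binom{n-6}{k-4}$ and $3|\hf|\ge 36\binom{n-3}{k-3}$, the error is at most $\binom{n-6}{k-4}/(6\binom{n-3}{k-3})\le\varepsilon/3<\varepsilon$, completing the case.

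The main obstacle is calibrating the dichotomy so that both sub-bounds clear $\tfrac23-\varepsilon$: Case 1 becomes tight precisely at $\varepsilon=1/6$, which is exactly the stated ceiling, while Case 2 needs $\binom{n-6}{k-4}/\binom{n-3}{k-3}=O(\varepsilon)$, which is the role of the hypothesis $n\ge k/\varepsilon$. A secondary subtlety is ensuring that the comparison of $b'$ to the threshold $T$ is set up with the right parameter $u=3$; smaller $u$ gives too weak a conclusion in Case 2, while larger $u$ enlarges $T$ and breaks Case 1.
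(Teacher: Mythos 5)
Your proof is correct and follows the same overall strategy as the paper's: bound $|\hf(\bar{x},\bar{y})|$ via \eqref{ineq-hgxy2}, invoke Theorem~\ref{thm-3.1} with $u=3$ on the cross-intersecting pair $\hf(x,\bar{y})$, $\hf(\bar{x},y)$ once both are above the threshold, and exploit the common element of largest degree. Your dichotomy on whether the smaller of $|\hf(x,\bar{y})|$, $|\hf(\bar{x},y)|$ exceeds the Theorem~\ref{thm-3.1} threshold, together with the symmetric three-way averaging of $|\hf(x)|+|\hf(y)|+|\hf(z)|$, is a cleaner packaging than the paper's case split on the ordering of the three nonzero parts $|\hf(x,y)|$, $|\hf(x,\bar{y})|$, $|\hf(\bar{x},y)|$ followed by a sub-split, but the ingredients and the quantitative calibration (including the implicit restriction to $k\ge 4$) are essentially identical.
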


\begin{proof}
By symmetry  assume that $|\hf(x,\bar{y})|\geq |\hf(\bar{x},y)|$. By \eqref{ineq-hgxy2} we infer
\[
|\hf(\bar{x},\bar{y})|\leq \binom{n-6}{k-4}+\binom{n-7}{k-4}< 2\binom{n-4}{k-4}\leq \frac{2k}{ n}\binom{n-3}{k-3} \leq \frac{\varepsilon}{6}|\hf|.
\]
It follows that
\begin{align}\label{ineq-3.1}
|\hf(x,y)|+|\hf(x,\bar{y})|+|\hf(\bar{x},y)|&= |\hf|-|\hf(\bar{x},\bar{y})|> \left(1-\frac{\varepsilon}{6}\right)|\hf|.
\end{align}

If $|\hf(x,y)|\geq |\hf(x,\bar{y})|\geq |\hf(\bar{x},y)|$ or $|\hf(x,\bar{y})|\geq|\hf(x,y)| \geq |\hf(\bar{x},y)|$, then by \eqref{ineq-3.1}
\begin{align}\label{ineq-3.2}
|\hf(x)|=|\hf(x,y)|+|\hf(x,\bar{y})| &\geq \frac{2}{3}\left(|\hf(x,y)|+|\hf(x,\bar{y})|+|\hf(\bar{x},y)|\right)\nonumber\\[5pt]
&> \frac{2}{3}\left(1-\frac{\varepsilon}{6}\right)|\hf|\nonumber\\[5pt]
&> \left(\frac{2}{3}-\varepsilon\right)|\hf|.
\end{align}
Now we assume that $|\hf(x,\bar{y})|\geq |\hf(\bar{x},y)|\geq|\hf(x,y)|$.

If $|\hf(x,y)|+|\hf(x,\bar{y})| \geq \frac{2}{3}\left(1-\frac{\varepsilon}{6}\right)|\hf|$ then by \eqref{ineq-3.2} we are done as well. Thus we may also assume that
\begin{align}\label{ineq-3.3}
|\hf(x,y)|+|\hf(x,\bar{y})| < \frac{2}{3}\left(1-\frac{\varepsilon}{6}\right)|\hf|.
\end{align}
By \eqref{ineq-3.1} and \eqref{ineq-3.3} we infer
\[
|\hf(x,\bar{y})|\geq |\hf(\bar{x},y)| >  \frac{1}{3}\left(1-\frac{\varepsilon}{6}\right)|\hf| > \frac{1}{4}|\hf|\geq 3 \binom{n-3}{k-3}.
\]
It follows that
\begin{align}\label{ineq-3.4}
|\hf(x,\bar{y})|,|\hf(\bar{x},y)|> \binom{n-3}{k-2}-\binom{n-6}{k-2}+\binom{n-6}{k-4},
\end{align}
Since $\hf(x,\bar{y}),\hf(\bar{x},y)\subset \binom{[n]\setminus \{x,y\}}{k-1}$ are cross-intersecting,  by applying Theorem  \ref{thm-3.1} with $u=3$ we have
\[
\gamma(\hf(x,\bar{y})), \gamma(\hf(\bar{x},y)) <\binom{n-6}{k-4}<\binom{n-4}{k-4}.
\]
Let $i\in [n-2]$ be the element of the largest degree in both $\hf(x,\bar{y})$ and $\hf(\bar{x},y)$. Then
\begin{align*}
|\hf(i)|\geq|\hf(x,\bar{y},i)|+|\hf(\bar{x},y,i)|&\geq |\hf(x,\bar{y})|+|\hf(\bar{x},y)|-\gamma(\hf(x,\bar{y}))-\gamma(\hf(\bar{x},y))\\[5pt]
&> |\hf(x,\bar{y})|+|\hf(\bar{x},y)|-2\binom{n-4}{k-4}.
\end{align*}
Consequently,
\begin{align*}
|\hf(i)|&\geq \frac{2}{3}\left(|\hf(x,y)|+|\hf(x,\bar{y})|+|\hf(\bar{x},y)|\right)-2\binom{n-4}{k-4}\\[5pt]
 &\overset{\eqref{ineq-3.1}}> \frac{2}{3}\left(1-\frac{\varepsilon}{6}\right)|\hf|-\frac{\varepsilon}{ 6}|\hf|\\[5pt]
 &\geq \frac{2}{3}\left(1-\frac{5\varepsilon}{12}\right)|\hf|\\[5pt]
 &> \left(\frac{2}{3}-\varepsilon\right)|\hf|.
\end{align*}
\end{proof}

\begin{proof}[Proof of Theorem \ref{thm-main2}]
Arguing indirectly assume $\hf\subset \binom{[n]}{k}$ is  intersecting, $|\hf|\geq 36\binom{n-3}{k-3}$ and $\varrho(\hf)\leq \frac{2}{3}-\varepsilon$. Without loss of generality suppose that $\hf$ is shifted ad extremis with respect to $\{\varrho(\hf)\leq \frac{2}{3}-\varepsilon\}$ and let $\mathds{H}$ be the graph formed by the shift-resistant pairs.

By Lemma \ref{cor-2.5}, we may assume that for any $P\in \binom{[n]}{2}$,
\begin{align}\label{ineq-hfpupbd2}
|\hf(P)|&\leq \binom{n-3}{k-3}+\binom{n-4}{k-3}+\binom{n-5}{k-3}+\binom{n-7}{k-4}< 3\binom{n-3}{k-3}.
\end{align}
Note that for any $(i,j)\in \mathds{H}$ we have
\begin{align}\label{ineq-sijhf2}
|S_{ij}(\hf)(i)|>\left(\frac{2}{3}-\varepsilon\right)|\hf|.
\end{align}
It is equivalent to
\begin{align}\label{ineq-sijshift2}
|\hf(i,j)|+|\hf(i,\bar{j})\cup \hf(\bar{i},j)|> \left(\frac{2}{3}-\varepsilon\right)|\hf|.
\end{align}

\begin{claim}
The matching number of $\mathds{H}$ is at most one.
\end{claim}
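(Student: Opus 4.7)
The plan is to argue by contradiction: suppose $\mathds{H}$ contains two disjoint edges $(a_1,b_1)$ and $(a_2,b_2)$. For $r=1,2$, set $\hg_r=\hf_{\{a_r,b_r\}}$. The shift-resistance relation \eqref{ineq-sijshift2}, applied to each pair, gives
\[
|\hg_r|=|\hf(a_r,b_r)|+|\hf(a_r,\overline{b_r})\cup\hf(\overline{a_r},b_r)|>\Bigl(\tfrac{2}{3}-\varepsilon\Bigr)|\hf|.
\]
Since $\hg_1,\hg_2\subset \hf$, a plain inclusion-exclusion $|\hg_1|+|\hg_2|\leq |\hf|+|\hg_1\cap\hg_2|$ yields
\[
|\hg_1\cap\hg_2|>\Bigl(\tfrac{1}{3}-2\varepsilon\Bigr)|\hf|\geq 9\binom{n-3}{k-3},
\]
where the last step uses $\varepsilon\leq\tfrac{1}{24}$ and $|\hf|\geq 36\binom{n-3}{k-3}$.

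For the matching upper bound I would invoke Lemma \ref{lem-3.5} with $R=\{a_1,b_1\}$, $Q=\{a_2,b_2\}$, and take $M$ to be the universal bound $\binom{n-3}{k-3}+\binom{n-4}{k-3}+\binom{n-5}{k-3}+\binom{n-7}{k-4}$ on $|\hf(P)|$ supplied by \eqref{ineq-hfpupbd2}. Because $M\geq \binom{n-3}{k-3}+\binom{n-5}{k-3}\geq 2\binom{n-5}{k-3}$, inequality \eqref{ineq-FQR-general2} applies and yields
\[
|\hg_1\cap\hg_2|\leq 3M+\binom{n-7}{k-5}+\binom{n-8}{k-5}.
\]

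The desired contradiction is then reduced to verifying $3M+\binom{n-7}{k-5}+\binom{n-8}{k-5}\leq 9\binom{n-3}{k-3}$. A repeated application of Pascal's identity ($\binom{n-i}{k-3}-\binom{n-i-1}{k-3}=\binom{n-i-1}{k-4}$) rewrites $9\binom{n-3}{k-3}-3M$ as $6\binom{n-4}{k-4}+3\binom{n-5}{k-4}-3\binom{n-7}{k-4}$, so the required inequality becomes
\[
3\binom{n-7}{k-4}+\binom{n-7}{k-5}+\binom{n-8}{k-5}\leq 6\binom{n-4}{k-4}+3\binom{n-5}{k-4},
\]
which follows for $n\geq k/\varepsilon\geq 24k$ simply from $\binom{n-4}{k-4}\geq \binom{n-7}{k-4}$ and $\binom{n-7}{k-4}\geq \binom{n-7}{k-5},\binom{n-8}{k-5}$. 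The only subtle point is that the whole chain is tight at $\varepsilon=\tfrac{1}{24}$ and $|\hf|=36\binom{n-3}{k-3}$; this is precisely where the strict inequality in the shift-resistance relation \eqref{ineq-sijshift2} plays its role, propagating through to a strict gap between the lower and upper bounds on $|\hg_1\cap\hg_2|$ and thereby closing the argument. I expect the main obstacle to be the careful bookkeeping in this tight numerical step rather than any conceptual difficulty.
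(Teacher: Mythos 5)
Your proof is correct and takes essentially the same approach as the paper: assume two disjoint edges in $\mathds{H}$, lower-bound $|\hg_1|,|\hg_2|$ via the shift-resistance inequality, upper-bound $|\hg_1\cap\hg_2|$ via Lemma \ref{lem-3.5} with the same $M$ from \eqref{ineq-hfpupbd2}, and obtain a contradiction by inclusion-exclusion. The only cosmetic difference is that you carry out the binomial bookkeeping for $3M+\binom{n-7}{k-5}+\binom{n-8}{k-5}\leq 9\binom{n-3}{k-3}$ explicitly, a step the paper asserts without showing the Pascal manipulations.
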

\begin{proof}
Suppose for contradiction that $(a_1,b_1), (a_2,b_2)\in \mathds{H}$ are two disjoint edges. Let $\hg_r=\hf_{\{a_r,b_r\}}$, $r=1,2$.
Then by \eqref{ineq-sijshift2} we have
\begin{align*}
|\hg_r| = & |\hf(a_r,b_r)|+|\hf(a_r,\overline{b_r})\cup \hf(\overline{a_r},b_r)|>  \left(\frac{2}{3}-\varepsilon\right)|\hf|, r=1,2.
\end{align*}
By applying Lemma \ref{lem-3.5} with $M=\binom{n-3}{k-3}+\binom{n-4}{k-3}+\binom{n-5}{k-3}+\binom{n-7}{k-4}>2\binom{n-5}{k-3}$, we infer from \eqref{ineq-FQR-general2}:
\begin{align*}
|\hg_1\cap \hg_2|&\leq 3M+\binom{n-7}{k-5}+\binom{n-8}{k-5}< 9\binom{n-3}{k-3} \leq \frac{1}{4}|\hf|.
\end{align*}
Then by $\varepsilon\leq \frac{1}{24}$
\[
|\hg_1\cup\hg_2| > \frac{4}{3}|\hf|-2\varepsilon |\hf|-|\hg_1\cap \hg_2|> \left(\frac{13}{12}-2\varepsilon\right)|\hf|\geq |\hf|,
\]
a contradiction.
\end{proof}

Now we distinguish two cases.

\vspace{5pt}
{\noindent\bf Case 1.} $\mathds{H}=\emptyset$.
\vspace{5pt}

Then $\hf$ is initial. Using $\varrho(\hf)\leq \frac{2}{3}-\varepsilon$ we infer that
\begin{align}\label{ineq-3.5}
|\hf(\bar{1})| &\geq \left(\frac{1}{3}+\varepsilon\right)|\hf|\geq \left(\frac{1}{3}+\varepsilon\right)36\binom{n-3}{k-3}> 12\binom{n-3}{k-3}.
\end{align}
Since $\hf$ is initial, $\hf(\{r\},[3])$ is 2-intersecting on $[4,n]$ for $r=2$ and $r=3$. By applying Theorem \ref{thm-tintersecting} with $\ha=\hb$, we get for $r=2,3$
\begin{align*}
|\hf(\{r\},[3])|&\leq \binom{n-3}{k-3}.
\end{align*}
By initiality again, $\hf(\overline{[3]})$ is  4-intersecting on $[4,n]$. Applying Theorem \ref{thm-tintersecting}, we get
\begin{align*}
|\hf(\overline{[3]})|&\leq \binom{n-3}{k-4}<\binom{n-3}{k-3}.
\end{align*}
By \eqref{ineq-hfpupbd2}, we also have
\begin{align*}
|\hf(2,3)|&\leq 3\binom{n-3}{k-3}.
\end{align*}
Then
\begin{align*}
 |\hf(\bar{1})|&\leq |\hf(2,3)|+ |\hf(2,[3])|+|\hf(3,[3])|+|\hf(\overline{[3]})|< 6 \binom{n-3}{k-3},
\end{align*}
contradicting \eqref{ineq-3.5}.

\vspace{5pt}
{\noindent\bf Case 2.} $\mathds{H}$ has matching number one.
\vspace{5pt}

For notational convenience  assume that $(n-1,n)\in \mathds{H}$. Then $S_{ij}(\hf)=\hf$ for all $1\leq i<j\leq n-2$.
Define
\begin{align*}
\hg&=\{F\setminus\{n-1,n\}\colon F\in \hf,|F\cap\{n-1,n\}|=1\}\subset\binom{[n-2]}{k-1},\\[5pt]
\hh&=\{F\in \hf\colon F\subset [n-2]\}\subset \binom{[n-2]}{k}.
\end{align*}
Note that $\hg,\hh$ are both initial. Clearly, by \eqref{ineq-sijshift2} we have
\begin{align}\label{ineq-3.6}
|\hg|>\left(\frac{2}{3}-\varepsilon\right)|\hf|-|\hf(n-1,n)|>\left(\frac{2}{3}
-\varepsilon\right)|\hf|-3\binom{n-3}{k-3}.
\end{align}
%Now considering $\hg(\bar{1}),\hh(1)\subset \binom{[2,n-2]}{k-1}$. Since $\hf$ is intersecting, $\hg(\bar{1}),\hh(1)$ are cross-intersecting.
Using \eqref{ineq-hfpupbd2} we infer that
\begin{align}\label{ineq-3.7}
|\hg(1)|\leq |\hf(1,n-1)|+|\hf(1,n)|< 6\binom{n-3}{k-3}.
\end{align}
By initiality we know $|\hg(2)|\leq |\hg(1)|$. Hence by \eqref{ineq-3.6} and \eqref{ineq-3.7} we obtain
\[
|\hg(\bar{1},\bar{2})|>|\hg|-|\hg(1)|-|\hg(2)|\geq |\hg|-2|\hg(1)|> \left(\frac{2}{3}
-\varepsilon\right)|\hf|-15\binom{n-3}{k-3}.
\]
Since $\varepsilon \leq\frac{1}{24}$,
\[
|\hg(\bar{1},\bar{2})|>\left(\frac{2}{3}
-\frac{1}{24}\right)|\hf|-15\binom{n-3}{k-3}
>7.5\binom{n-3}{k-3}>\binom{n-4}{k-3}.
\]
By initiality, $\hg(\bar{1},\bar{2})$ and $\hh(1,\bar{2})$ are cross 2-intersecting.  Using  \eqref{ineq-tinterha} we have
\[
|\hh(1,\bar{2})|\leq \binom{n-4}{k-4}< \binom{n-3}{k-3}.
\]
Since $\hh(\bar{1},\bar{2})$ is 3-intersecting, by \eqref{ineq-tinterhb}
\[
|\hh(\bar{1},\bar{2})|\leq \binom{n-4}{k-3}< \binom{n-3}{k-3}.
\]
Note that $\hh(\bar{1},2)\subset \hh(1,\bar{2})$ and $|\hh(1,2)|\leq 3\binom{n-3}{k-3}$. Therefore,
\begin{align*}
|\hh|&=|\hh(\bar{1},\bar{2})|+|\hh(\bar{1},2)|+| \hh(1,\bar{2})|+|\hh(1,2)|< 6 \binom{n-3}{k-3}.
\end{align*}
Then $\varrho(\hf)\leq \frac{2}{3}-\varepsilon$ implies
\[
|\hf(\overline{n-1},n)|\geq |\hf(\overline{n-1})|-|\hh|>\left( \frac{1}{3}+\varepsilon\right)|\hf|-6\binom{n-3}{k-3}> 6\binom{n-3}{k-3}.
\]
and
\[
|\hf(n-1,\overline{n})|\geq |\hf(\bar{n})|-|\hh|>\left( \frac{1}{3}+\varepsilon\right)|\hf|-6\binom{n-3}{k-3}> 6\binom{n-3}{k-3}.
\]
Now
\begin{align*}
|\hf(\overline{\{1,n-1\}},n)|\geq |\hf(\overline{n-1},n)|-|\hf(1,n)|&> 3\binom{n-3}{k-3}
\end{align*}
and
\begin{align*}
|\hf(\overline{\{1,n\}},n-1)|\geq |\hf(n-1,\overline{n})|-|\hf(1,n-1)|&> 3\binom{n-3}{k-3},
\end{align*}
which contradicts the fact that $\hf(\overline{\{1,n-1\}},n), \hf(\overline{\{1,n\}},n-1)\subset \binom{[2,n-2]}{k-1}$ are cross 2-intersecting.
\end{proof}

\section{A sharpening of the Frankl-Tokushige inequality and small diversities}

In this section, we  prove a sharpening of the Frankl-Tokushige inequality \cite{FT92}. As a corollary,  we give a short proof of the best possible upper bound on intersecting families $\hf$ with $2\leq \gamma(\hf)\leq n-k$, which was proved by Kupavskii \cite{Ku2}.

Let us recall the Frankl-Tokushige inequality \cite{FT92} as follows.

\begin{thm}[\cite{FT92}]\label{thm-ft92}
Let $\ha \subset\binom{X}{a}$ and $\hb \subset \binom{X}{b}$ be non-empty cross-intersecting families with $n=|X|\geq a+b$, $a\leq b$. Then
\begin{align}\label{ft92}
|\ha|+|\hb|\leq \binom{n}{b} - \binom{n-a}{b}+1.
\end{align}
\end{thm}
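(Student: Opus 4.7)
My plan is to reduce to shifted (initial) families via simultaneous shifting and then proceed by induction on $a+b$.

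First, apply the shifting operations $S_{ij}$ for all $1 \leq i < j \leq n$ simultaneously to the pair $(\ha, \hb)$ until both become initial under the shifting partial order $\prec$. This preserves cross-intersection (cf.\ \cite{F87}), the sizes $|\ha|$ and $|\hb|$, and non-emptyness, and yields $[a] \in \ha$ and $[b] \in \hb$ since any $\prec$-initial non-empty family contains the partial-order minimum.

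For the base case $a = 1$, the shifted family is $\ha = \{\{1\}, \ldots, \{s\}\}$ for some $s \geq 1$, forcing every $B \in \hb$ to contain $[s]$, so $|\hb| \leq \binom{n-s}{b-s}$. The target inequality $s + \binom{n-s}{b-s} \leq 1 + \binom{n-1}{b-1}$ reduces to the telescoping estimate
\[
\binom{n-1}{b-1} - \binom{n-s}{b-s} = \sum_{i=1}^{s-1} \binom{n-i-1}{b-i} \geq s - 1,
\]
each summand being at least $1$ since $n \geq b + 1$.

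For the inductive step ($a \geq 2$), partition both families by element $1$: $\ha(1) \subset \binom{[2,n]}{a-1}$, $\ha(\bar{1}) \subset \binom{[2,n]}{a}$, and analogously for $\hb$. Since $[a] \in \ha$ and $[b] \in \hb$, both $\ha(1)$ and $\hb(1)$ are non-empty. The pairs $(\ha(1), \hb(\bar{1}))$ of uniformities $(a-1, b)$ and $(\ha(\bar{1}), \hb(1))$ of uniformities $(a, b-1)$ are both cross-intersecting on $[2, n]$. In the generic case $a < b$, when all four parts are non-empty, the inductive hypothesis yields
\[
|\ha| + |\hb| \leq \binom{n}{b} - \binom{n-a}{b} - \binom{n-a-1}{b-1} + 2 \leq \binom{n}{b} - \binom{n-a}{b} + 1,
\]
where the first step uses Pascal's identity $\binom{n-1}{b} + \binom{n-1}{b-1} = \binom{n}{b}$ and the second uses $\binom{n-a-1}{b-1} \geq 1$ (which holds since $n \geq a + b$). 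Edge cases, where $\ha(\bar{1})$ or $\hb(\bar{1})$ is empty so one family is a star at $1$ of size at most $\binom{n-1}{a-1}$ or $\binom{n-1}{b-1}$, are handled by combining the trivial star bound with a single inductive application to the remaining non-trivial pair.

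The main obstacle is the case $a = b$: the pair $(\ha(\bar{1}), \hb(1))$ now has its smaller uniformity swapped, and the two inductive bounds sum to $2\bigl[\binom{n-1}{a} - \binom{n-a}{a} + 1\bigr]$, which strictly exceeds the target $\binom{n}{a} - \binom{n-a}{a} + 1$ (for instance by $2$ when $n = 7, a = 3$, and by much more for larger $n$). To close this gap one must exploit additional structural information: since $\ha,\hb$ are now both shifted and cross-intersecting, Fact \ref{fact-2} says $\ha(\bar{1})$ and $\hb(\bar{1})$ are cross \emph{2-intersecting} in $[2, n]$, and applying Theorem \ref{thm-tintersecting} with $t = 2$ (possibly together with a separate analysis via Hilton's Lemma on the lex-initial versions of $\ha$ and $\hb$) yields a sharper bound on $|\ha(\bar{1})| + |\hb(\bar{1})|$ than the first-order Frankl--Tokushige inequality alone, restoring the desired total bound in the symmetric case.
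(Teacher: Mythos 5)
Your base case and the $a<b$ inductive step check out (the telescoping identity $\binom{n}{b}-\binom{n-a}{b}=\sum_{i=1}^{a}\binom{n-i}{b-1}$ and the estimate $\binom{n-a-1}{b-1}\ge 1$ under $n\ge a+b$ do the work), but the $a=b$ case is a genuine gap, which you yourself flag. Your numerical check is correct: at $a=b=3$, $n=7$ the doubled inductive bound gives $34$ versus the target $32$, so the two-application induction cannot close. The fix you sketch, however, does not amount to a proof. Fact \ref{fact-2} gives that $\ha(\bar 1),\hb(\bar 1)$ are cross $2$-intersecting, and Theorem \ref{thm-tintersecting} then yields a disjunction (either $|\hb(\bar 1)|\le\binom{n-1}{k-2}$ or $|\ha(\bar 1)|\le\binom{n-1}{k-3}$), not a bound on $|\ha(\bar 1)|+|\hb(\bar 1)|$; in the second branch $|\hb(\bar 1)|$ is uncontrolled and you are back where you started. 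The deeper obstacle is that $S_{ij}$-shifting only gives $\prec$-initiality, which is not strong enough to conclude that the smaller family is a star at $1$ when $|\ha|\le\binom{n-1}{k-1}$. That conclusion needs the \emph{lexicographic} compression of Hilton's Lemma, which is a different (and stronger) normalization than the one you set up.

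The paper does not prove Theorem \ref{thm-ft92} itself (it is cited from \cite{FT92}), but its generalization Theorem \ref{thm-6.3} handles precisely this $a=b$ case in condition (ii), and the route there is instructive and cleaner than your sketch: after compressing to lex-initial, saturated families via Hilton's Lemma, one may assume by symmetry that $|\ha|\le\binom{n-1}{k-1}$, whence $\ha\subset\{F:1\in F\}\subset\hb$. Then $\hb(1)$ is \emph{full}, contributing exactly $\binom{n-1}{k-1}$, and one applies a single instance of the $(a-1,b)=(k-1,k)$ case to the cross-intersecting pair $(\ha(1),\hb(\bar 1))$ on $[2,n]$. This avoids the doubled inductive loss entirely. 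If you want to salvage your approach, replace $S_{ij}$-shifting by Hilton's Lemma at the outset and use this one-sided reduction for $a=b$; at the same time tighten the edge-case discussion, where currently the bound $|\ha(1)|\le\binom{n-1}{a-1}$ plus a single inductive application is asserted to suffice without verification (in fact it requires an inequality of the type proved in the paper's \eqref{ineq-binom}, which is true but not trivial).
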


\begin{lem}
Let $a,b$ and $n$ be integers. Suppose that $n\geq a+b$ and $2\leq a\leq b$. Then
\begin{align}\label{ineq-binom}
\binom{n-1}{a-1}+\binom{n-1}{b-1}\leq \binom{n}{b}-\binom{n-a+1}{b}+ n-a+1.
\end{align}
\end{lem}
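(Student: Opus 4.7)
My plan is to prove \eqref{ineq-binom} by induction on $a\geq 2$, treating $b$ and the relation $n\geq a+b$ as ``traveling parameters''; the only tools needed are Pascal's identity and the unimodality of the binomial coefficients $\binom{m}{k}$ in $k$. The base case $a=2$ is in fact an equality: by Pascal, $\binom{n}{b}-\binom{n-1}{b}=\binom{n-1}{b-1}$, so the right-hand side collapses to $\binom{n-1}{b-1}+(n-1)$, which matches $\binom{n-1}{1}+\binom{n-1}{b-1}$ on the left.

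\textbf{Inductive step.} For $a\geq 3$, I would apply the induction hypothesis to the triple $(a-1,\,b,\,n-1)$, which is legitimate since $n-1\geq (a-1)+b$ and $2\leq a-1\leq b$. This yields
\[
\binom{n-2}{a-2}+\binom{n-2}{b-1}\leq \binom{n-1}{b}-\binom{n-a+1}{b}+(n-a+1).
\]
Subtracting this from the target \eqref{ineq-binom} and using Pascal's identity three times --- $\binom{n-1}{a-1}-\binom{n-2}{a-2}=\binom{n-2}{a-1}$, $\binom{n-1}{b-1}-\binom{n-2}{b-1}=\binom{n-2}{b-2}$, and $\binom{n}{b}-\binom{n-1}{b}=\binom{n-1}{b-1}=\binom{n-2}{b-1}+\binom{n-2}{b-2}$ --- the $\binom{n-a+1}{b}$ and $n-a+1$ terms cancel entirely, and what remains is the clean reduction
\[
\binom{n-2}{a-1}\leq \binom{n-2}{b-1}.
\]

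\textbf{Closing the induction and main obstacle.} This residual inequality is immediate from unimodality: the assumption $a\leq b$ gives $a-1\leq b-1$, and $n\geq a+b$ gives $b-1\leq n-a-1=(n-2)-(a-1)$, so $b-1$ sits in the interval $[\,a-1,\,(n-2)-(a-1)\,]$ on which $\binom{n-2}{\cdot}\geq \binom{n-2}{a-1}$. The only judgment call in the whole argument is the choice of induction variable: induction on $n$ with $(a,b)$ fixed does not reduce to an easier instance (the $n\to n+1$ increment fails to line up with the target algebraically), whereas induction on $a$ with $n$ decrementing in step produces the precise cancellation of the $\binom{n-a+1}{b}$ and $(n-a+1)$ terms used above. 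Once this setup is chosen, no further obstacles remain.
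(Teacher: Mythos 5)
Your proof is correct and takes essentially the same approach as the paper: the paper rewrites the inequality using $\binom{n-1}{a-1}=\binom{n-1}{n-a}$ and $n-a+1=\binom{n-a+1}{n-a}$, then telescopes $\binom{x}{n-a}-\binom{x}{b}$ from $x=n-a+1$ to $x=n-1$, reducing each step by Pascal to $\binom{x-1}{n-a-1}\leq\binom{x-1}{b-1}$. Your induction on $a$ with $n$ decrementing runs through exactly the same chain of one-step reductions (after the symmetry $\binom{x-1}{n-a-1}=\binom{x-1}{a'-1}$ with $a'=x-n+a+1$), closing each with the same unimodality observation, so the two arguments are the same proof phrased once as a telescoping sum and once as an induction.
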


\begin{proof}
Note that \eqref{ineq-binom} is equivalent to
\[
\binom{n-1}{n-a}-\binom{n-1}{b}\leq \binom{n-a+1}{n-a} -\binom{n-a+1}{b}.
\]
It suffices to show that for $n-a+2\leq x\leq n-1$
\[
\binom{x}{n-a}-\binom{x}{b}\leq \binom{x-1}{n-a} -\binom{x-1}{b}.
\]
This is equivalent to
\[
\binom{x-1}{n-a-1}\leq \binom{x-1}{b-1}.
\]
Note that $n-a-1\geq b-1$ and $n-a-1+b-1\geq x-1$. By  the symmetry and the strict unimodality of binomial coefficients, we conclude the proof.
\end{proof}

The following result is a sharpening of \eqref{ft92}.

\begin{thm}\label{thm-6.3}
Let $n,a,b,r$ be positive integers with $n\geq a+b$. Let $\ha \subset\binom{[n]}{a}$ and $\hb \subset \binom{[n]}{b}$ be cross-intersecting families. Suppose that (i),(ii) or (iii) holds:
\begin{itemize}
  \item[(i)] $a< b$ and $|\ha|\geq r$;
  \item[(ii)] $a=b$ and $|\ha|,|\hb|\geq r$;
  \item[(iii)]  $a> b$, $|\ha|\geq r$ and $|\hb|\geq \binom{n}{b}-\binom{n-a+b}{b} +r$,
\end{itemize}
 Then for $1\leq r\leq b-1$,
\begin{align}\label{gft92-1}
|\ha|+|\hb|\leq \binom{n}{b} - \binom{n-a+1}{b}+\binom{n-a-r+1}{b-r}+r.
\end{align}
For $b\leq r\leq n-a+1$,
\begin{align}\label{gft92-2}
|\ha|+|\hb|\leq \binom{n}{b} - \binom{n-a+1}{b}+n-a+1.
\end{align}
\end{thm}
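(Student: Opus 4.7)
The plan is to reduce the problem to the lexicographic-initial case and then analyse the sum $|\ha|+|\hb|$ by a monotonicity argument. I first apply simultaneous shifting to $\ha$ and $\hb$ (preserving cross-intersection and cardinalities), and then invoke Hilton's Lemma to replace the resulting initial pair by the lex-initial families $\hl(n,a,|\ha|)$ and $\hl(n,b,|\hb|)$, which are still cross-intersecting.

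For cases (i) and (ii), with $1\leq r\leq b-1$, the lex-initial $\ha$ automatically contains the $r$ lex-smallest $a$-sets $A_j=\{1,\ldots,a-1,a+j-1\}$ for $1\leq j\leq r$ (valid since $r\leq b-1\leq n-a-1$). Cross-intersection with these forces every $B\in\hb$ either to meet $\{1,\ldots,a-1\}$ or to contain $\{a,\ldots,a+r-1\}$, giving
\[
|\hb|\leq\binom{n}{b}-\binom{n-a+1}{b}+\binom{n-a-r+1}{b-r},
\]
which is \eqref{gft92-1} in the boundary situation $|\ha|=r$. To upgrade to arbitrary $|\ha|=s\geq r$, I would introduce $f(s):=s+h(s)$, where $h(s)$ is the maximum size of a lex-initial $\hb$ compatible with $|\ha|=s$, and prove $\max_{s\geq r}f(s)\leq f(r)$ via a three-regime analysis: for $r\leq s\leq b-1$, Pascal's identity yields $h(s)-h(s+1)=\binom{n-a-s}{b-s}\geq 1$, so $f$ is non-increasing; for $b\leq s\leq n-a+1$, $h$ is constant and $f$ grows linearly, but the elementary inequality $\binom{n-a-r+1}{b-r}\geq n-a-r+1$ gives $f(n-a+1)\leq f(r)$; and for $s>n-a+1$, each newly added ``layer-two'' lex set of $\ha$ removes at least one set from the admissible $\hb$ (the first such removal alone costs $\binom{n-a-1}{b-1}\geq 1$ sets), keeping $f(s)\leq f(n-a+1)$. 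The second bound \eqref{gft92-2} for $b\leq r\leq n-a+1$ follows from the same analysis, with the maximum of $f$ now attained at $s=n-a+1$.

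Case (iii), with $a>b$, is handled by reducing to case (ii) of the same uniformity. The hypothesis $|\hb|\geq\binom{n}{b}-\binom{n-a+b}{b}+r$ ensures that the lex-initial $\hb$ contains every $b$-set meeting $\{1,\ldots,a-b\}$ plus at least $r$ additional sets in $\binom{[a-b+1,n]}{b}$. Cross-intersecting $\ha$ with every $b$-set meeting $\{1,\ldots,a-b\}$ forces $\{1,\ldots,a-b\}\subseteq A$ for every $A\in\ha$: if $i\in\{1,\ldots,a-b\}\setminus A$, the $b$-set $B=\{i\}\cup C$ for any $C\in\binom{[n]\setminus(A\cup\{i\})}{b-1}$ would lie in $\hb$ while being disjoint from $A$. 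Setting $\ha'=\{A\setminus\{1,\ldots,a-b\}:A\in\ha\}$ and $\hb'=\{B\in\hb:B\cap\{1,\ldots,a-b\}=\emptyset\}$ yields cross-intersecting families in $\binom{[a-b+1,n]}{b}$ of the same uniformity with $|\ha'|=|\ha|\geq r$ and $|\hb'|\geq r$; applying case (ii) on the reduced ground set of size $m=n-a+b$ bounds $|\ha'|+|\hb'|$, and adding the $\binom{n}{b}-\binom{m}{b}$ sets of $\hb$ meeting $\{1,\ldots,a-b\}$ recovers the claimed full bound.

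The main obstacle I expect is verifying the monotonicity of $f$ in the third regime ($s>n-a+1$), where the lex-initial $\ha$ leaves its first layer and the bookkeeping of which $b$-sets are destroyed by each new $a$-set becomes delicate; this is precisely the step where the hypothesis $n\geq a+b$ is used most crucially.
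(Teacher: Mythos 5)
Your Hilton reduction, the formula for $h(s)$ on $[1,n-a+1]$, the Pascal computation $h(s)-h(s+1)=\binom{n-a-s}{b-s}$, the inequality $\binom{n-a-r+1}{b-r}\geq n-a-r+1$, and the case-(iii) reduction to case (ii) are all sound and broadly parallel to the paper. But the third regime is a genuine gap, and it is not a small one. Your claim that for $s>n-a+1$ each new lex set deletes at least one admissible $b$-set is simply false: take $a=2$, $b=4$, $n=7$. At $s=10$ the family $\ha$ is the $\{1,x\}$-layer plus $\{2,3\},\{2,4\},\{2,5\},\{2,6\}$, and every surviving $B$ already contains $\{1,2\}$; adding $A_{11}=\{2,7\}$ removes nothing, so $h(11)=h(10)$ and $f(11)>f(10)$. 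Worse, the intended conclusion $f(s)\leq f(n-a+1)$ is false in case (ii): with $a=b$, taking $\ha=\binom{[n]}{a}$ forces $\hb=\emptyset$, so $f\bigl(\binom{n}{a}\bigr)=\binom{n}{a}$, which exceeds $f(n-a+1)=(n-a+1)+\binom{n}{b}-\binom{n-a+1}{b}$ (e.g.\ $a=b=3$, $n=7$ gives $35>30$). Your analysis nowhere uses the hypothesis $|\hb|\geq r$ of condition (ii), which is precisely what excludes such configurations, so the argument as written would ``prove'' an inequality that is not true.

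The paper sidesteps both difficulties by arguing by induction on $a$ within case (i): after Hilton and saturatedness, either $|\ha|\leq\binom{n-1}{a-1}$, in which case $1\in A$ for all $A\in\ha$, $\hb(1)$ is full, and one passes to $\ha(1)\subset\binom{[2,n]}{a-1}$, $\hb(\bar 1)\subset\binom{[2,n]}{b}$; or $|\ha|\geq\binom{n-1}{a-1}$, in which case every $B\in\hb$ contains $1$ and the binomial estimate of the lemma closes the argument. Case (ii) is then reduced to case (i) by first observing (from cross-intersection with $a=b$) that one may assume $|\ha|\leq\binom{n-1}{k-1}$, and only then passing to $\ha(1),\hb(\bar 1)$; this is where the constraint $|\hb|\geq r$ enters. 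To salvage your approach you would need, instead of the false monotonicity, a correct bound on $f(s)$ beyond the first layer that incorporates $|\hb|\geq r$ when $a=b$ and, when $a<b$, a genuine proof that $\sup_{s>n-a+1}f(s)\leq f(n-a+1)$; the inductive decomposition of the paper is the cleanest way to do both at once.
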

\begin{proof}
First we prove the theorem under condition (i). By Hilton's Lemma, we may assume that $\ha=\hl(n,a,|\ha|)$ and $\hb=\hl(n,b,|\hb|)$.  Without loss of generality, we further assume that $\ha,\hb$ form a saturated pair. Let us prove the theorem by induction on $a$. If $a=1$, let $\ha=\{1,2,\ldots,x\}$ with $x\geq r$. If $x> b$, then the cross-intersecting property implies $\hb=\emptyset$ and
\[
|\ha|+|\hb| =x\leq  n.
\]
If $1\leq x\leq b$ then $[x]\subset B$ for each $B\in \hb$, implying that
\[
|\ha|+|\hb| \leq \binom{n-x}{b-x}+x.
\]
Clearly for $b\leq r\leq n-a+1$ \eqref{gft92-2} follows.
Let
\[
h(x) =\binom{n-x}{b-x}+x.
\]
Note that for $x\leq b-1$
\[
h(x+1)-h(x) = 1- \binom{n-x-1}{b-x}\leq 0.
\]
It follows that $h(x)$ is decreasing on $[1,b]$.
Thus for  $1\leq r\leq b-1$,
\begin{align*}
|\ha|+|\hb|&\leq \max\left\{\binom{n-r}{b-r}+r,n\right\}= \binom{n-r}{b-r}+r
\end{align*}
and \eqref{gft92-1} follows for $a=1$.

For general $a>1$, if $|\ha|\leq\binom{n-1}{a-1}$ then by Hilton's Lemma $1\in A$ for all $A\in \ha$. By saturatedness $\{B\in \binom{[n]}{b}\colon 1\in B\}\subset \hb$. Then $\ha(1)\subset\binom{[2,n]}{a-1},\hb(\bar{1})\subset\binom{[2,n]}{b}$ are cross-intersecting and $|\ha(1)|=|\ha|\geq r$.  By the induction hypothesis,
\begin{align*}
|\ha(1)|+|\hb(\bar{1})|&\leq \binom{n-1}{b} - \binom{n-a+1}{b}+\binom{n-a-r+1}{b-r}+r,\ 1\leq r\leq b-1,\\[5pt]
|\ha(1)|+|\hb(\bar{1})|&\leq \binom{n-1}{b} - \binom{n-a+1}{b}+n-a+1,\ b\leq r\leq n-a+1.
\end{align*}
Note that $\hb(1)$ is full, that is, $|\hb(1)|=\binom{n-1}{b-1}$. Thus \eqref{gft92-1} and \eqref{gft92-2} follow.

If $|\ha|\geq \binom{n-1}{a-1}$ by Hilton's Lemma $\{A\in \binom{[n]}{a}\colon 1\in A\}\subset \ha$. It follows that $1\in B$ for all $B\in \hb$. Since $\ha(\bar{1}),\hb(1)$ are cross-intersecting and $a\leq  b-1$, we infer
\begin{align*}
|\ha|+|\hb|=|\ha(1)|+(|\ha(\bar{1})|+|\hb(1)|)&\leq \binom{n-1}{a-1}+\binom{n-1}{b-1}.
\end{align*}
By \eqref{ineq-binom} we obtain that
\begin{align*}
|\ha|+|\hb|\leq \binom{n-1}{a-1}+\binom{n-1}{b-1}&\leq   \binom{n}{b} - \binom{n-a+1}{b}+n-a+1\\[5pt]
&\leq \binom{n}{b} - \binom{n-a+1}{b}+\binom{n-a-r+1}{b-r}+r.
\end{align*}
This proves the theorem  under condition (i).

Next assume that condition (ii) holds and let $a=b=k$. Since $\ha,\hb$ are cross-intersecting, by symmetry we may assume that $|\ha|\leq \binom{n-1}{k-1}$. Then
\[
\ha\subset \left\{F\in \binom{[n]}{k}\colon 1\in F\right\}\subset\hb.
 \]
 It follows that $\ha(1),\hb(\bar{1})$ are cross-intersecting and $|\ha(1)|=|\ha|\geq r$. Since the theorem holds under condition (i) with $a=k-1$ and $b=k$, for $1\leq r\leq k-1$ we infer
\begin{align*}
|\ha(1)|+|\hb(\bar{1})|\leq \binom{n-1}{k} - \binom{n-k+1}{k}+\binom{n-k-r+1}{k-r}+r.
\end{align*}
For $k\leq r\leq n-k+1$,
\begin{align*}
|\ha(1)|+|\hb(\bar{1})|\leq \binom{n-1}{k} - \binom{n-k+1}{k}+n-k+1.
\end{align*}
Moreover, $|\hb(1)|=\binom{n-1}{k-1}$.
Thus \eqref{gft92-1} and  \eqref{gft92-2} follow under condition (ii).

Finally, we prove the theorem under condition (iii). Let $t=a-b$. Since $|\hb|\geq \binom{n}{b}-\binom{n-t}{b} +r$, we see that
\[
\left\{B\in \binom{[n]}{b}\colon B\cap[t]\neq \emptyset \right\}\subset \hb
\]
and $[t]\subset A$ for any $A\in \ha$. Then $\ha([t]), \hb(\overline{[t]})\subset \binom{[t+1,n]}{b}$ are cross-intersecting and $|\ha([t])|\geq r$, $|\hb(\overline{[t]})|\geq r$.  Since the theorem holds under condition (ii), we conclude that
for $1\leq r\leq b-1$,
\begin{align*}
|\ha|+|\hb|&=\binom{n}{b}-\binom{n-t}{b}+|\ha([t])|+|\hb(\overline{[t]})|\\[5pt]
&\leq \binom{n}{b}-\binom{n-t}{b}+\binom{n-t}{b} - \binom{n-t-b+1}{b}+\binom{n-t-b-r+1}{b-r}+r\\[5pt]
&= \binom{n}{b} - \binom{n-a+1}{b}+\binom{n-a-r+1}{b-r}+r.
\end{align*}
For $b\leq r\leq n-a+1$,
\begin{align*}
|\ha|+|\hb|&=\binom{n}{b}-\binom{n-t}{b}+|\ha([t])|+|\hb(\overline{[t]})|\\[5pt]
&\leq \binom{n}{b}-\binom{n-t}{b}+\binom{n-t}{b} - \binom{n-t-b+1}{b}+n-t-b+1\\[5pt]
&= \binom{n}{b} - \binom{n-a+1}{b}+n-a+1.
\end{align*}

\end{proof}

The following theorem was proved by Kupavskii in \cite{Ku2}. Here we give a short proof by applying Theorems  \ref{thm-6.3} and \ref{thm-f87}.

\begin{thm}[\cite{Ku2}]
Let $\hf\subset\binom{[n]}{k}$ be intersecting, $n> 2k\geq 6$ and $\gamma(\hf)\geq r$. If $1\leq r\leq k-2$ then
\begin{align}\label{gft92-7}
|\hf|\leq \binom{n-1}{k-1} - \binom{n-k}{k-1}+\binom{n-k-r}{k-r-1}+r.
\end{align}
If $k-1\leq r\leq n-k$ then
\begin{align}\label{gft92-8}
|\hf|\leq \binom{n-1}{k-1} - \binom{n-k}{k-1}+n-k.
\end{align}
\end{thm}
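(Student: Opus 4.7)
The plan is to perform a single case split on the size of the star at the maximum-degree vertex of $\hf$, applying Theorem~\ref{thm-6.3}(iii) on the large-star side and Theorem~\ref{thm-f87} on the small-star side. After relabeling, let $1$ be a vertex achieving $|\hf(1)|=\Delta(\hf)$, so that $|\hf(\bar{1})|=|\hf|-\Delta(\hf)=\gamma(\hf)\geq r$. The families $\ha:=\hf(\bar{1})\subset\binom{[2,n]}{k}$ and $\hb:=\hf(1)\subset\binom{[2,n]}{k-1}$ are cross-intersecting on a ground set of size $n-1$. It is important to work with $\hf$ itself and not with a shifted version, because shifting may decrease $\gamma(\hf)$.

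Case A, when $|\hb|\geq\binom{n-2}{k-2}+r$: using the identity $\binom{n-1}{k-1}-\binom{n-2}{k-1}=\binom{n-2}{k-2}$, this is precisely condition~(iii) of Theorem~\ref{thm-6.3} with $a=k$, $b=k-1$ applied on a ground set of size $n-1$. Direct substitution of (gft92-1) yields \eqref{gft92-7} for $1\leq r\leq k-2$, while (gft92-2) yields \eqref{gft92-8} for $k-1\leq r\leq n-k$. This branch requires no new ideas.

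Case B, when $|\hb|<\binom{n-2}{k-2}+r$: then $\Delta(\hf)=|\hf(1)|<\binom{n-2}{k-2}+r$, and since $r\leq n-k\leq\binom{n-3}{k-2}$ whenever $n>2k\geq 6$,
\[
\Delta(\hf)<\binom{n-2}{k-2}+\binom{n-3}{k-2}=\binom{n-1}{k-1}-\binom{n-3}{k-1}=\Delta(\ha_2(n,k)),
\]
so Theorem~\ref{thm-f87} with $s=2$ yields $|\hf|\leq|\ha_2(n,k)|$. It then remains to check that $|\ha_2(n,k)|$ does not exceed the target right-hand side. A routine binomial calculation gives $|\ha_{s+1}(n,k)|-|\ha_s(n,k)|=\binom{n-s-2}{k-2}-\binom{n-s-2}{k-s}\geq 0$ for $n\geq 2k$ and $2\leq s\leq k-1$, so $|\ha_s(n,k)|$ is non-decreasing in $s$, and in particular $|\ha_2(n,k)|\leq|\ha_{k-1}(n,k)|$. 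Separately, the RHS of \eqref{gft92-7} is non-increasing in $r$ and agrees at $r=k-1$ with the RHS of \eqref{gft92-8}, which is exactly $|\ha_{k-1}(n,k)|$. Chaining, $|\ha_2(n,k)|\leq|\ha_{k-1}(n,k)|\leq\mathrm{RHS}(r)$ for all $r\in[1,n-k]$, closing Case~B.

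The main obstacle is the closing chain of inequalities in Case~B, where one must verify both the monotonicity of $|\ha_s(n,k)|$ in $s$ and the monotonicity of the target in $r$; everything else is a direct plug-in of the two cited theorems. A secondary point worth noting is that Theorem~\ref{thm-6.3}(iii), rather than (i) with the roles of $\ha$ and $\hb$ swapped, is the correct version to apply, since the latter produces a strictly weaker numerical bound that does not specialize to \eqref{gft92-7}.
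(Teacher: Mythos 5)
Your approach is essentially the same as the paper's: place a maximum-degree vertex at $1$, split on whether $|\hf(1)|\geq\binom{n-2}{k-2}+r$, apply Theorem~\ref{thm-6.3}(iii) with $(a,b)=(k,k-1)$ on $[2,n]$ when the degree is large, and fall back to Theorem~\ref{thm-f87} when it is small. The one deviation is inside Case~B: the paper notes directly that $\Delta(\hf)\leq\Delta(\ha_2(n,k))\leq\Delta(\ha_{k-1}(n,k))$ and invokes Theorem~\ref{thm-f87} at the parameter value $k-1$, obtaining $|\hf|\leq|\ha_{k-1}(n,k)|$ in one step; you invoke it at the value $2$ and then bridge via the monotonicity of $|\ha_s(n,k)|$ in $s$, which is correct but a detour. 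Also, a small off-by-one in your closing chain: the RHS of \eqref{gft92-7} coincides with the RHS of \eqref{gft92-8}, i.e.\ with $|\ha_{k-1}(n,k)|$, at $r=k-2$ (where $\binom{n-k-r}{k-r-1}+r=(n-2k+2)+(k-2)=n-k$), not at $r=k-1$, where the formula would give $\binom{n-1}{k-1}-\binom{n-k}{k-1}+k<\binom{n-1}{k-1}-\binom{n-k}{k-1}+n-k$ for $n>2k$. Since \eqref{gft92-7} is only asserted for $r\leq k-2$ and your monotonicity-in-$r$ observation shows the minimum over that range is attained at $r=k-2$, the conclusion is unaffected, but the stated agreement point should be corrected.
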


\begin{proof}
Without loss of generality assume that $|\hf(1)|=\max\limits_{1\leq i\leq n} |\hf(i)|$. Clearly $\gamma(\hf)\geq r$ implies $|\hf(\bar{1})|\geq r$. If $|\hf(1)|\leq \binom{n-2}{k-2}+r-1\leq \binom{n-2}{k-2}+\binom{n-3}{k-2}\leq \binom{n-1}{k-1} - \binom{n-k}{k-1}=\Delta(\ha_{k-1}(n,k))$, then by \eqref{ineq-f87} we obtain that
\[
|\hf|\leq |\ha_{k-1}(n,k)|=\binom{n-1}{k-1} - \binom{n-k}{k-1}+n-k.
\]
Thus we may assume that $|\hf(1)|\geq \binom{n-2}{k-2}+r$.  Apply Theorem \ref{thm-6.3} to $\hf(\bar{1})$ and $\hf(1)$ with $a=k$ and $b=k-1$,
for $1\leq r\leq k-2$ we have
\begin{align*}
|\hf| =|\hf(\bar{1})|+|\hf(1)|\leq \binom{n-1}{k-1} - \binom{n-k}{k-1}+\binom{n-k-r}{k-1-r}+r.
\end{align*}
For $k-1\leq r\leq n-k$, we have
\begin{align*}
|\hf| =|\hf(\bar{1})|+|\hf(1)|\leq \binom{n-1}{k-1} - \binom{n-k}{k-1}+n-k.
\end{align*}
Thus the theorem is proven.
\end{proof}

\section{Concluding remarks}

In the present paper we used the shifting ad extremis to improve earlier bounds concerning conditions $\varrho(\hf)>\frac{2}{3}-o(1)$. Let us conclude this paper by a challenging open problem.

\begin{conj}
Suppose that $\hf\subset \binom{[n]}{k}$ is intersecting, $n>100k$, $|\hf|>\binom{n-3}{k-3}$. Then
\begin{align}\label{ineq-7.1}
\varrho(\hf)\geq \frac{3}{7}.
\end{align}
\end{conj}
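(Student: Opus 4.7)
The plan is to shift $\hf$ ad extremis with respect to the property $\hp = \{\varrho(\hf) < \tfrac{3}{7}\}$ and analyze the graph $\mathds{H}$ of shift-resistant pairs exactly as in Sections 3 and 4, but with a smaller target ratio and, crucially, a much weaker size hypothesis. The constant $\tfrac{3}{7}$ is dictated by the Fano-plane family
\[
\hf^{\ast} \;=\; \left\{F\in\binom{[n]}{k}\colon L\subset F \text{ for some line } L \text{ of a Fano plane on }[7]\right\},
\]
which is intersecting, satisfies $|\hf^{\ast}|\sim 7\binom{n-3}{k-3}$, and has $\varrho(\hf^{\ast})\to\tfrac{3}{7}$. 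Since $\hf^{\ast}$ is not preserved by full shifting, the ad extremis variant is essential.

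First I would prove an analog of Corollary \ref{cor-1} using Hilton's Lemma applied to the cross-intersecting pair $(\hf(x,y),\hf(\bar x,\bar y))$: there is a threshold $M=O(\binom{n-3}{k-3})$ such that if $|\hf(x,y)|\geq M$ for some pair $\{x,y\}$, then $\varrho(\hf)\geq\tfrac{3}{7}$ follows directly. Henceforth we may assume $|\hf(P)|<M$ for every $P\in\binom{[n]}{2}$, which makes Lemma \ref{lem-3.5} applicable throughout and bounds all pairwise degrees uniformly.

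Second, in the initial case $\mathds{H}=\emptyset$, Fact \ref{fact-2} gives that $\hf(\bar 1)$ is $2$-intersecting and \eqref{ineq-shifted} gives $\partial\hf(\bar 1)\subset\hf(1)$. A Katona-type shadow bound for $2$-intersecting families then yields $|\partial\hf(\bar 1)|\geq\tfrac{3}{4}|\hf(\bar 1)|$, which is exactly $|\hf(1)|\geq\tfrac{3}{7}|\hf|$. This step should be comparatively smooth because the worst initial intersecting family is the triangle family $\hht(n,k)=\ha_2(n,k)$, already satisfying $\varrho=\tfrac{2}{3}+o(1)>\tfrac{3}{7}$; the weak size assumption $|\hf|>\binom{n-3}{k-3}$ is what is needed to rule out star-like degeneracies in the shadow computation.

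In the non-initial case I would use Proposition \ref{prop-2.10} together with a case split on the matching number of $\mathds{H}$. A shift-resistant pair $(i,j)$ gives
\[
|\hf(i,j)| \;+\; |\hf(i,\bar j)\cup\hf(\bar i,j)| \;\geq\; \tfrac{3}{7}|\hf|,
\]
and as in Sections 3 and 4, a large matching in $\mathds{H}$ forces $\sum_{(i,j)}|\hf_{\{i,j\}}|$ to exceed $|\hf|$ once the pairwise overlaps are bounded via \eqref{ineq-FQR-general2}. Under the weak hypothesis $|\hf|>\binom{n-3}{k-3}$ one cannot hope for a matching bound as small as in Theorem \ref{thm-main1}; instead one aims to show that a matching of size at most $7$ (the Fano vertex count) is compatible with $\varrho(\hf)<\tfrac{3}{7}$, and then to exploit the resulting $7$-vertex transversal $T$ by averaging: if every $F\in\hf$ meets $T$ in at least $3$ points, the pigeonhole gives $\max_{i\in T}|\hf(i)|\geq\tfrac{3}{7}|\hf|$.

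The hard part, and the main obstruction, is precisely the weakness of the size assumption $|\hf|>\binom{n-3}{k-3}$: it removes all multiplicative slack in the shifting analysis (compare the comfortable factors of $\tfrac{1}{6}$ and $\tfrac{1}{4}$ available in Sections 3 and 4), so every bound coming from Lemma \ref{lem-3.5} and from the shadow inequality must be tight to leading order. Moreover, classical shifting arguments tend to collapse the Fano structure that $\tfrac{3}{7}$ is engineered to detect; the decisive step is thus to either forbid a matching of size $7$ in $\mathds{H}$ or directly extract the Fano transversal from it, and I expect this identification — not the final averaging — to be the crux of the proof.
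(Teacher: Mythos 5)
The statement you were asked to prove is labelled \verb|\begin{conj}| in the paper: it is explicitly posed as ``a challenging open problem'' in Section 6 and the authors offer \emph{no proof} of it. So there is no paper proof to compare against, and your submission is (correctly, and self-confessedly) a plan rather than a proof.

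As a plan, the outline is plausible and identifies the right landmarks — shift ad extremis with respect to $\{\varrho(\hf)<\tfrac{3}{7}\}$, handle $\mathds{H}=\emptyset$ via Fact~\ref{fact-2}, Katona's shadow theorem, and \eqref{ineq-shifted} (this step actually already gives $\varrho>\tfrac12$ for initial families, so the $\tfrac34$-shadow lower bound you ask for is weaker than what is available), and control overlaps with Lemma~\ref{lem-3.5}. But the decisive step is missing, and you flag it yourself: with $|\hf|$ only assumed larger than $\binom{n-3}{k-3}$, the bound $|\hf(P)|<M$ from the Hilton-lemma reduction cannot be made $o(|\hf|)$, so the inclusion--exclusion over edges of $\mathds{H}$ does not by itself bound the matching number — the slack factors $\tfrac16$ and $\tfrac14$ used in Sections~3 and~4 come precisely from $|\hf|\geq 36\binom{n-3}{k-3}$, and they evaporate here. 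The jump from ``matching number at most $7$'' to ``there is a $7$-element set $T$ meeting every $F\in\hf$ in at least $3$ points'' is also unsupported: $\mathds{H}$ records shift-resistant pairs, not a transversal of $\hf$, and nothing in Proposition~\ref{prop-2.10} or the ad-extremis machinery produces a $3$-transversal of bounded size. The final pigeonhole ($\sum_{i\in T}|\hf(i)|\geq 3|\hf|$ for $|T|=7$) is fine, but it presupposes exactly the structural extraction that is the open content of the conjecture. In short: your proposal is an honest reconnaissance of an unsolved problem, not a proof, and the one genuinely new idea the conjecture calls for — forcing or forbidding Fano-like structure under a very weak size hypothesis — is still absent.
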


Note that the following construction attains the equality in \eqref{ineq-7.1}.
Let $L_1,L_2,\ldots,L_7$ be the 7 sets of size 3  corresponding to the seven lines of the Fano plane. Without loss of generality, we may assume that
\begin{align*}
&L_1=(1,2,3),\ L_2=(1,4,5),\ L_3=(1,6,7),\ L_4=(2,4,6), \\[5pt]
&L_5=(2,5,7),\ L_6=(3,5,6),\ L_7=(3,4,7).
\end{align*}
For $n\geq 10$ and $k\geq 3$, let
\[
\hf =\left\{F\in \binom{[n]}{k}\colon F\cap [7] = L_i \mbox{ for some } i\in [7]\right\}.
\]
Then it is easy to see that $\varrho(\hf) =\frac{3}{7}$.


\begin{thebibliography}{10}
\bibitem{AK}
R. Ahlswede, L.H. Khachatrian, The complete nontrivial-intersection theorem for systems of
finite sets. J. Combinatorial Theory Series A 76 (1996), 121--138.
\bibitem{Borg}
P. Borg, Intersecting and cross-intersecting families of labeled sets, Electronic Journal of
Combinatorics, 15.1 (2008), N9.

\bibitem{ekr} P. Erd\H{o}s, C. Ko, R. Rado, Intersection theorems for systems of finite sets, Quart. J. Math. Oxford Ser. 12 (1961), 313--320.

%\bibitem{F76}
%P. Frankl, Generalizations of theorems of Katona and Milner, Acta Math. Acad. Sci. Hungar. 27 (1976), 359--363.
\bibitem{F78-2}
P. Frankl, On intersecting families of finite sets. J. Combin. Theory Ser. A 24 (1978), 146--161.
\bibitem{F78} P. Frankl, The Erd\H{o}s-Ko-Rado theorem is true for $n = ckt$,  Coll. Math. Soc. J. Bolyai 18 (1978), 365--375.
%\bibitem{F80}
%    P. Frankl, On intersecting families of finite sets, Bull. Austral. Math. Soc. 21 (3) (1980), 363--372.
%\bibitem{F87} P. Frankl, The shifting technique in extremal set theory, Surveys in Combinatorics  123 (1987), 81--110.
\bibitem{F87-2}
P. Frankl, Erd\H{o}s-Ko-Rado theorem with conditions on the maximal degree, J. Comb. Theory, Ser. A 46(2) (1987), 252--263.
%\bibitem{F91}
  %  P. Frankl, Shadows and shifting, Graphs and Combinatorics 7 (1991), 23--29.
\bibitem{F17}
P. Frankl, Antichains of fixed diameter, Mosc. J. Comb. Number Theory 7 (2017), 189--219.
\bibitem{F2017}
P. Frankl, A simple proof of the Hilton-Milner theorem,
Moscow Journal of Combinatorics and Number Theory 8 (2019), 97--101.

\bibitem{F2020} P. Frankl, Maximum degree and diversity in intersecting hypergraphs, J. Combin. Theory, Ser. B 144 (2020), 81--94.
\bibitem{F2022} P. Frankl, On the maximum of the sum of the sizes of non-trivial cross-intersecting families, arXiv:2209.01826, 2022.

\bibitem{FFuredi}
P. Frankl, Z. F\"{u}redi, Non-trivial intersecting families, J. Combin. Theory, Ser.
A 41 (1986), 150--153.
\bibitem{FFuredi2}
P. Frankl, Z. F\"{u}redi, Exact solution of some Tur\'{a}n-type problems, J. Comb. Theory, Ser. A 45(2) (1987), 226--262.

%\bibitem{FK2021}
%P. Frankl, G.O.H. Katona, On strengthenings of the intersecting shadow theorem, J. Combin. Theory, Ser. A 184 (2021), 105510.
\bibitem{FKu2021}
P. Frankl, A. Kupavskii, Diversity, J. Comb. Theory, Ser. A 182 (2021), Article ID 105468, 27 p.
%\bibitem{FOT95}
%P. Frankl, K. Ota, N. Tokushige, Uniform intersecting families with covering number four, J. Comb. Theory, Ser. A 71 (1) (1995), 127--145.
\bibitem{FT92}
P. Frankl, N. Tokushige, Some best possible inequalities concerning cross-intersecting families, J. Comb. Theory, Ser. A 61 (1992), 87--97.
%\bibitem{FT11}
%P. Frankl, N. Tokushige, On r-cross intersecting families of sets, Combinatorics, Probability \& Computing 20 (2011), 749-752.
\bibitem{FW2022}
 P. Frankl, J. Wang, A product version of the Hilton-Milner Theorem, arXiv:2206.07218, 2022.

\bibitem{FW2022-2}
P. Frankl, J. Wang, Intersections and distinct intersections in cross-intersecting families, arXiv:2205.00109, 2022.
\bibitem{FW2022-3}
P. Frankl, J. Wang, Intersecting families with covering number three, to appear.
\bibitem{Furedi}
Z. F\"{u}redi, On finite set-systems whose every intersection is a kernel of a star,
Discrefe Math. 47 (1983), 129--132.

\bibitem{HK}
J. Han, Y. Kohayakawa, The maximum size of a non-trivial intersecting uniform family that is not a subfamily of the Hilton-Milner family, Proc. Amer. Math. Soc. 145 (2017), 73--87.
\bibitem{Hilton}
A.J.W. Hilton, The Erd\H{o}s-Ko-Rado Theorem with valency conditions, unpublished manuscript,
1976.
 \bibitem{HM67}
 A.J.W. Hilton, E.C. Milner, Some intersection theorems for systems of finite sets, Q. J. Math. 18  (1967), 369--384.
\bibitem{GH}
 G. Hurlbert, V. Kamat, New injective proofs of the Erd\H{o}s-Ko-Rado and
Hilton-Milner theorems, Discrete Math. 341 (2018), 1749--1754.
\bibitem{huang}
H. Huang, Two extremal problems on intersecting families, Eur. J. Comb. 76 (2019), 1--9.
 \bibitem{Katona}
G.O.H. Katona, Intersection theorems for systems of finite sets, Acta Math. Acad. Sci. Hung. 15
(1964), 329--337.
 \bibitem{Katona66}
G.O.H. Katona, A theorem of finite sets, Theory of Graphs. Proc. Colloq. Tihany, Akad. Kiad\'{o} (1966), 187--207.

%\bibitem{KL}
%N. Keller, N. Lifshitz, The Junta method for hypergraphs and the Erd\H{o}s-Chv\'{a}tal simplex conjecture, Adv. Math. (2021), 392:107991.

\bibitem{Kruskal}
J.B. Kruskal, The number of simplices in a complex, Mathematical Optimization Techniques 251 (1963), 251--278.
\bibitem{Ku1}
A. Kupavskii, Diversity of uniform intersecting families, Eur. J. Comb. 74 (2018), 39--47.
\bibitem{Ku2}
A. Kupavskii, Structure and properties of large intersecting families, arXiv:1810.00920, 2018.
\bibitem{KZ}
 A. Kupavskii, D. Zakharov, Regular bipartite graphs and intersecting families, J. Comb. Theory,
Ser. A 155 (2018), 180--189.
\bibitem{LP}
N. Lemons, C. Palmer, Unbalance of set systems, Graphs Comb. 24 (2008), 361--365.
\bibitem{lovasz}
 L. Lov\'{a}sz, On minimax theorems of combinatorics, Math. Lapok 26 (1975), 209--264.
\bibitem{Mors}
 M. M\"{o}rs, A generalization of a theorem of Kruskal, Graphs and Combinatorics 1 (1985), 167--183.
%\bibitem{Sperner}
%E. Sperner, Ein Satz \"{u}ber Untermengen einer endlichen Menge, Math. Zeitschrift 27 (1928), 544--548.
\bibitem{Wi}
R. M. Wilson, The exact bound in the Erd\H{o}s-Ko-Rado theorem, Combinatorica 4 (1984), 247--257.
\end{thebibliography}
\end{document}